\tikzset{mynode/.style={inner sep=2pt,fill,outer sep=0,circle}}
\newtheorem{theorem}{Theorem}[section]
\newtheorem{lemma}{Lemma}[section] 
\newtheorem{corollary}{Corollary}[section] 
\newtheorem{proposition}{Proposition}[section] 
\newtheorem{assumption}{Assumption} 
\newtheorem{remark}{Remark}[section]  
\numberwithin{equation}{section}
\newsavebox\foobox
\newlength{\foodim}
\newcommand{\slantbox}[2][0]{\mbox{%
        \sbox{\foobox}{#2}%
        \foodim=#1\wd\foobox
        \hskip \wd\foobox
        \hskip -0.5\foodim
        \pdfsave
        \pdfsetmatrix{1 0 #1 1}%
        \llap{\usebox{\foobox}}%
        \pdfrestore
        \hskip 0.5\foodim
}}
\def\Fourier{\slantbox[-.45]{$\mathscr{F}$}}
\numberwithin{equation}{section}
\numberwithin{figure}{section}
\DeclareRobustCommand{\rchi}{{\mathpalette\irchi\relax}}
\newcommand{\irchi}[2]{\raisebox{\depth}{$#1\chi$}}
\newcommand\obullet[1]{\ThisStyle{\ensurestackMath{%
  \stackon[1pt]{\SavedStyle#1}{\SavedStyle\kern.6\LMpt\bullet}}}}
\newcommand\ocirc[1]{\ThisStyle{\ensurestackMath{%
  \stackon[1pt]{\SavedStyle#1}{\SavedStyle\kern.6\LMpt\circ}}}}
\title{ Dispersive Effective Model in the Time-Domain \\ for Acoustic Waves Propagating in Bubbly Media}
\author{Arpan Mukherjee\footnote{Radon Institute (RICAM), Austrian Academy of
Sciences, Altenbergerstrasse 69, A-4040, Linz, Austria (arpan.mukherjee@oeaw.ac.at). This author is supported by the Austrian Science Fund (FWF): P36942.} \ and Mourad Sini\footnote{Radon Institute (RICAM), Austrian Academy of Sciences, Altenbergerstrasse 69, A-4040, Linz, Austria (mourad.sini@oeaw.ac.at). This author is partially supported by the Austrian Science Fund (FWF): P36942.}}
\begin{document}
\maketitle
\begin{abstract}

We derive the effective medium theory for the linearized time-domain acoustic waves propagating in a bubbly media. The analysis is done in the time-domain avoiding the need to use Fourier transformation. This allows considering general incident waves avoiding band limited ones as usually used in the literature. Most importantly, the outcome is as follows:
\begin{enumerate}
       \item As the bubbles are resonating, with the unique subwavelength Minnaert resonance, the derived effective wave model is dispersive. Precisely, the effective acoustic model is an integro-differential one with a time-convolution term highlighting the resonance effect.

       \item The periodicity in distributing the cluster of bubbles is not needed, contrary to the case of using traditional two-scale homogenization procedures. Precisely, given any $C^1$-smooth function $K$, we can distribute the bubbles so that locally the number of such bubbles is dictated by $K$. In addition to its dispersive character, the effective model is affected by the function $K$. Such freedom and generality is appreciable in different applied sciences including materials sciences and mathematical imaging.
\end{enumerate}
\bigskip
 \textbf{Key words.} Time-Domain Acoustic Scattering, Time-Domain Effective Medium Theory, Contrasting Bubbly Media, Asymptotic Analysis, Retarded Layer and Volume Potentials, Lippmann–Schwinger equation.
 
\bigskip
 \textbf{AMS subject classifications.} 35C20, 35L05, 37L50, 35P25, 78M35
\end{abstract}
\tableofcontents


\section{Introduction}          

\subsection{Time-Dependent
 Acoustic Waves Propagating in Bubbly Media}
\noindent
Let $\mathrm{D}$ be a bounded $C^2$-regular domain in $\mathbb R^3$ and consider the following transmission acoustic problem:
\begin{align}\label{hyperbolic}
\begin{cases}   \mathrm{k}^{-1}(\mathrm{x})u_{\mathrm{t}\mathrm{t}}- \text{div} \rho^{-1}(\mathrm{x)}\nabla u = 0 & \text{in}\;  (\mathbb{R}^{3}\setminus \partial\mathrm{D}) \times (0,\mathrm{T}),\\
 u\big|_{+} = u\big|_{-}  & \text{on}\; \partial\mathrm{D},\\
 \rho_{{\mathrm{c}}}^{-1}\partial_\nu u\big|_{+} = \rho_{\mathrm{b}}^{-1} \partial_\nu u\big|_{-} & \text{on} \; \partial \mathrm{D}, \\
 u(\mathrm{x},0)= u_\mathrm{t}(\mathrm{x},0) = 0 & \text{for}\; \mathrm{x} \in \mathbb{R}^3.
\end{cases}
\end{align}
We define $\rho$ as the mass density, given by $\rho = \rho_{\mathrm{b}}\rchi_{\mathrm{D}} + \rho_{c}\rchi_{\mathbb{R}^{3}\setminus\overline{\mathrm{D}}}$, where $\rho_{\mathrm{b}}$ and $\rho_{c}$ represent the mass densities within the domain $\mathrm{D}$ and its complement, respectively. Similarly, $\mathrm{k}$ represents the bulk modulus, defined as $\mathrm{k} = \mathrm{k}_{\mathrm{b}}\rchi_{\mathrm{D}} + \mathrm{k}_{\mathrm{c}}\rchi_{\mathbb{R}^{3}\setminus\overline{\mathrm{D}}}$, with $\mathrm{k}_{\mathrm{b}}$ and $\mathrm{k}_{\mathrm{c}}$ denoting the bulk modulus of the bubble and the acoustic medium, respectively. Here, we denote  
$
\partial_\nu u \big|_{\pm}(\mathrm{x},\mathrm{t}) := \lim_{\mathrm{h}\to 0}\nabla u(\mathrm{x}\pm \mathrm{h}\nu_\mathrm{x},\mathrm{t})\cdot \nu_\mathrm{x},
$
where $\nu$ represents the outward normal vector to $\partial\mathrm{D}$. 
The motivation for this study comes from the propagation of acoustic waves in bubbly media. The set \(\mathrm{D}\) is therefore defined as the union of small subdomains denoted by \(\mathrm{D} := \bigcup_{i=1}^M \mathrm{D}_i\), where each \(\mathrm{D}_i\) has the form \(\mathrm{D}_i = \delta \mathrm{B}_i + \mathrm{z}_i\). Here, for \(i = 1, 2, \ldots, M\), \(z_i\) and \(B_i\) represent the location and shape of a bubble, respectively. The parameter \(\delta \in \mathbb{R}^+\), with \(\delta \ll 1\), indicates the relative radius of \(\mathrm{D}_i\) compared to \(\mathrm{B}_i\). The domains \(\mathrm{B}_i\) are uniformly bounded with \(\mathcal{C}^2\) regularity and contain the origin. The maximum radius \(\delta\) is taken to be small compared to the maximum radius of the \(B_i\)'s, \(\delta \ll 1\).
We define \(\mathrm{d}\) as the minimum distance between bubbles, denoted by \(\mathrm{d}_{\mathrm{ij}} = \textbf{dist}(\mathrm{D}_i, \mathrm{D}_j)\) for \(i \neq j\), where \(\textbf{dist}\) represents the distance function, i.e.,
\[
\mathrm{d} := \min_{1 \leq i, j \leq \mathrm{M}} \mathrm{d}_{\mathrm{ij}}.
\]
Furthermore, we denote \(\varepsilon\) as the maximum diameter among the microbubbles, given by:
\[
\varepsilon := \max_{1 \leq i \leq \mathrm{M}} \textbf{diam}(\mathrm{D}_i) = \delta \max_{1 \leq i \leq \mathrm{M}} \textbf{diam}(\mathrm{B}_i),
\]
where \(\textbf{diam}\) represents the diameter function.
\newline

\noindent
We assume that the bubbles are filled with gas. Hence, they have a small bulk modulus \(\mathrm{k}_b\) and mass densities \(\rho_{\mathrm{b}}\) compared to the background liquid in which the bubbles are injected. Therefore, their sizes (i.e., their radii) can be designed to have the following scaling properties:
\begin{equation}\label{scale}
    \big[\mathrm{k}(x), \rho(x)\big] := 
    \begin{cases}
       [k_c, \rho_{\mathrm{c}}] & \text{in} \; \mathbb{R}^3 \setminus \mathrm{D}_i, \\
       [\mathrm{k}_{\mathrm{b}_i}, \rho_{\mathrm{b}_i}] = \big[\overline{\mathrm{k}}_{b_i} \delta^2, \overline{\rho}_{b_i} \delta^2\big] & \text{in} \; \mathrm{D}_i,
    \end{cases}
\end{equation}
where \(\overline{\mathrm{k}}_{b_i}\) and \(\overline{\rho}_{b_i}\) are independent of \(\delta\). With such scaling, we observe that the speed of propagation in \(\bigcup_{i=1}^\mathrm{M}\mathrm{D}_i\) and \(\mathbb{R}^{3} \setminus \overline{\bigcup_{i=1}^\mathrm{M}\mathrm{D}_i}\) is of order 1, i.e., \(\frac{\rho}{\mathrm{k}} \sim 1\).

\noindent
In this paper, we are interested in the following regimes for modeling the cluster distributed in a $3$D-bounded domain
\begin{align} \label{regimes}
    M \sim d^{-3} \quad \text{and} \quad d \sim \delta^3, \quad \delta \ll 1.
\end{align}
The model described above, with its specified conditions and regimes, is related to the linearized model for acoustic wave propagation in bubbly media; see \cite{C-M-P-T-1, C-M-P-T-2}. There are several works devoted to related time-harmonic settings; see \cite{habib-sini, AFGLZ-17, FH, DGS-21, GS-21, MPS}. Regarding the time-domain regimes, we derived in \cite[Theorem 1.1]{Arpan-Sini-JEvEq}, see also \cite{Arpan-Sini-SIMA}, the asymptotic expansion of the solution to the above problem in the presence of a cluster of bubbles using the time-domain boundary integral equation method under the described regimes with appropriate conditions on the denseness of the cluster of bubbles. These expansions, and the related regimes, are stated in the following proposition.

\noindent
To state this proposition, let us briefly introduce the required function spaces. We define the real-valued Sobolev space \(\mathrm{H}_0^\mathrm{r}(0,\mathrm{T})\) for \(\mathrm{r} \in \mathbb{R}\) and \(\mathrm{T} \in (0, \infty]\) as follows:
\[
\mathrm{H}_0^\mathrm{r}(0,\mathrm{T}) := \Big\{\mathrm{g}|_{(0,\mathrm{T})}: \mathrm{g} \in \mathrm{H}^\mathrm{r}(\mathbb{R}) \; \text{and} \; \mathrm{g}_{(-\infty,0)} \equiv 0\Big\}.
\]
Similarly, we can introduce a similar concept for functions taking values in a Hilbert space \(\mathrm{X}\), denoted by \(\mathrm{H}^\mathrm{r}_{0}(0,\mathrm{T};\mathrm{X})\). For \(\sigma > 0\) and \(\mathrm{r} \in \mathbb{Z}_+\), we define
\[
\mathrm{H}^\mathrm{r}_{0,\sigma}(0,\mathrm{T};\mathrm{X}) := \Big\{\mathrm{f} \in \mathrm{H}^\mathrm{r}_{0}(0,\mathrm{T};\mathrm{X}): \sum_{\mathrm{n}=0}^\mathrm{r}\int_0^\mathrm{T}\mathrm{e}^{-2\sigma \mathrm{t}} \Vert \partial_\mathrm{t}^\mathrm{n} \mathrm{f}(\cdot,\mathrm{t}) \Vert_\mathrm{X} \, d\mathrm{t} < \infty \Big\}.
\]

\begin{proposition}\label{mainthmulti}\cite[Theorem 1.1]{Arpan-Sini-JEvEq}
We take as an incident wave field \(u^\textbf{in}\) originating from a point source located at \(\mathrm{x}_0 \in \mathbb{R}^{3} \setminus \overline{\bigcup_{i=1}^\mathrm{M} \mathrm{D}_i}\) of the form
\begin{align}\label{incident-wave}
u^\textbf{in}(\mathrm{x}, \mathrm{t}, \mathrm{x}_0) := \frac{\lambda(\mathrm{t} - \mathrm{c}_0^{-1} \vert \mathrm{x} - \mathrm{x}_0 \vert)}{\vert \mathrm{x} - \mathrm{x}_0 \vert},
\end{align}
where \(\lambda \in \mathcal{C}^9(\mathbb{R})\) is a causal signal (i.e., it vanishes for \(\mathrm{t} < 0\)). We also denote by \(\mathrm{c}_0 = \sqrt{\frac{\mathrm{k}_\mathrm{c}}{\rho_\mathrm{c}}}\) the constant wave speed in \(\mathbb{R}^3 \setminus \overline{\mathrm{D}}\).

\noindent
We introduce the grad-harmonic subspace
\begin{align}\nonumber
    \nabla \mathbb{H}_{\text{arm}} := \Big\{ u \in \big(\mathrm{L}^2(\mathrm{B}_i)\big)^3: \exists \ \varphi \ \text{s.t.} \ u = \nabla \varphi, \; \varphi \in \mathrm{H}^1(\mathrm{B}_i) \; \text{and} \ \Delta \varphi = 0 \Big\}
\end{align}
and recall that the Magnetization potential is given by
\begin{align}\nonumber
    \bm{\mathrm{M}}_{\mathrm{B}_i}\big[f\big](\mathrm{x}) := \nabla \int_{\mathrm{B}_i} \mathop{\nabla}\limits_{\mathrm{y}} \frac{1}{4\pi |\mathrm{x} - \mathrm{y}|} \cdot  f(\mathrm{y}) \, d\mathrm{y}, \quad f \in \nabla \mathbb{H}_{\text{arm}}.
\end{align}
The Magnetization operator \(\bm{\mathrm{M}}_{\mathrm{B}_i}: \nabla \mathbb{H}_{\text{arm}} \rightarrow \nabla \mathbb{H}_{\text{arm}}\) induces a complete orthonormal basis, namely \(\big(\lambda^{(3)}_{\mathrm{n}_i}, \mathrm{e}^{(3)}_{\mathrm{n}_i}\big)_{\mathrm{n} \in \mathbb{N}}\); see for instance \cite{friedmanI}. We set \(\lambda_1^{(3)} := \min_i \max_n \lambda_{n_i}^{(3)}\). Then, under the conditions
\begin{align}\label{inversion-cond}
    \frac{\rho_\mathrm{c}}{4\pi} \text{vol}(\mathrm{B}_j) \; \Big(\frac{\delta}{\mathrm{d}}\Big)^6 \; \Big(\frac{1}{\lambda_1^{(3)}}\Big)^2 < 1 \quad \text{and} \quad \max_{1 \le i \le M} \sum_{\substack{j=1 \\ j \neq i}}^M \mathrm{q}_{ij} < \hbar_i,
\end{align}
the hyperbolic problem (\ref{hyperbolic}) is well-posed, and setting \(u := u^\textbf{in} + u^\mathrm{s}\), we have the asymptotic expansion
\begin{align}\label{assymptotic-expansion-us}
    u^\mathrm{s}(\mathrm{x}, \mathrm{t}) = \sum_{i=1}^\mathrm{M} \frac{\alpha_i \rho_\mathrm{c}}{4\pi |\mathrm{x} - \mathrm{z}_i|} \; |\mathrm{D}_i| \;\frac{\rho_{b_i}}{\mathrm{k}_{b_i}} \widetilde{\mathrm{Y}}_i\big(\mathrm{t} - \mathrm{c}_0^{-1} |\mathrm{x} - \mathrm{z}_i|\big) + \mathcal{O}(\delta^{2-l}) \; \text{as} \; \delta \to 0,
\end{align}
for \((\mathrm{x}, \mathrm{t}) \in \mathbb{R}^3 \setminus \mathrm{K} \times (0, \mathrm{T})\), with \(\overline{\mathbf{\Omega}} \subset\subset \mathrm{K}\), where \(\big(\widetilde{\mathrm{Y}}_i\big)_{i=1}^\mathrm{M}\) is the vector solution to the following non-homogeneous second-order matrix differential equation with zero initial conditions:
\begin{align}\label{matrixmulti}
\begin{cases}
    \hbar_i \frac{\mathrm{d}^2}{\mathrm{d} \mathrm{t}^2} \widetilde{\mathrm{Y}}_i(\mathrm{t}) + \widetilde{\mathrm{Y}}_i(\mathrm{t}) + \sum\limits_{\substack{j=1 \\ j \neq i}}^M \mathrm{q}_{ij} \frac{\mathrm{d}^2}{\mathrm{d} \mathrm{t}^2} \widetilde{\mathrm{Y}}_j(\mathrm{t} - \mathrm{c}_0^{-1} |\mathrm{z}_i - \mathrm{z}_j|) = \frac{\partial^2}{\partial t^2} u^\textbf{in}, \quad \text{in} \ (0, \mathrm{T}), \\
    \widetilde{\mathrm{Y}}_i(0) = \frac{\mathrm{d}}{\mathrm{d} \mathrm{t}} \widetilde{\mathrm{Y}}_i(0) = 0,
\end{cases}
\end{align}
where \(\hbar_i\) is defined as
$\hbar_i := \frac{\rho_\mathrm{c}}{2} \alpha_i \frac{\rho_{\mathrm{b}_i}}{\mathrm{k}_{\mathrm{b}_i}} \mathrm{A}_{\partial \mathrm{D}_i},$
with \(\alpha_i := \rho_{\mathrm{b}_i}^{-1} - \rho_\mathrm{c}^{-1}\) being the contrast between the inner and the outer acoustic coefficient. Furthermore, \(\bm{\mathrm{Q}} = \big(\mathrm{q}_{ij}\big)_{i,j=1}^\mathrm{M}\) is given by
\begin{align}\label{interaction-matrix}
\mathrm{q}_{ij} = \begin{cases}
0 & \text{for} \ i = j, \\
\frac{\mathrm{b}_j}{|\mathrm{z}_i - \mathrm{z}_j|} & \text{for} \ i \ne j,
\end{cases}
\end{align}
where \(\mathrm{b}_j\) is defined as
$\mathrm{b}_j := \rho_\mathrm{c} \alpha_j\ \text{vol}(D_j) \frac{\rho_{\mathrm{b}_j}}{\mathrm{k}_{\mathrm{b}_j}}.$
We also define
$\displaystyle\mathrm{A}_{\partial \mathrm{D}_i} := \frac{1}{|\partial \mathrm{D}_i|} \int_{\partial \mathrm{D}_i} \int_{\partial \mathrm{D}_i} \frac{(\mathrm{x} - \mathrm{y}) \cdot \nu_\mathrm{x}}{|\mathrm{x} - \mathrm{y}|} \, d\sigma_\mathrm{x} \, d\sigma_\mathrm{y}.$
\end{proposition}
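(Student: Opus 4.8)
The plan is to recast the transmission problem (\ref{hyperbolic}) as a coupled system of time-domain boundary integral equations via retarded layer potentials, and then to carry out a matched asymptotic analysis in $\delta$. I would first represent the scattered field as a superposition of retarded single-layer potentials, writing $u^\mathrm{s} = \sum_{i=1}^\mathrm{M}\mathcal{S}_{\partial\mathrm{D}_i}[\phi_i]$ with unknown densities $\phi_i$ supported on $\partial\mathrm{D}_i\times(0,\mathrm{T})$, and then impose the two transmission conditions on each $\partial\mathrm{D}_i$. Continuity of $u$ together with the jump relation for the conormal derivative closes a system for $(\phi_i)_{i=1}^\mathrm{M}$ into which the strong contrast $[\mathrm{k}_{\mathrm{b}_i},\rho_{\mathrm{b}_i}]=[\overline{\mathrm{k}}_{b_i}\delta^2,\overline{\rho}_{b_i}\delta^2]$ of (\ref{scale}) enters through the interior coefficients. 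Well-posedness of this system, and all quantitative estimates, I would obtain by Laplace transform in time, following the Bamberger--Ha-Duong/Lubich framework, working in the $\sigma$-weighted spaces $\mathrm{H}^\mathrm{r}_{0,\sigma}(0,\mathrm{T};\mathrm{X})$ introduced above.

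The second step is the single-bubble scaling that produces the resonance. Rescaling each $\mathrm{D}_i=\delta\mathrm{B}_i+\mathrm{z}_i$ to the fixed reference domain $\mathrm{B}_i$ and expanding the retarded single-layer operator in powers of $\delta$ (via the Taylor expansion of the causal near-field kernel), the balance between the $\delta^2$ scaling of the interior parameters and the $\delta$-homogeneity of the layer operators isolates a single dominant mode. Projecting the rescaled density equation using the orthonormal eigensystem $(\lambda^{(3)}_{\mathrm{n}_i},\mathrm{e}^{(3)}_{\mathrm{n}_i})$ of the Magnetization operator $\bm{\mathrm{M}}_{\mathrm{B}_i}$ on $\nabla\mathbb{H}_{\text{arm}}$ (which is exactly what the first condition in (\ref{inversion-cond}) renders invertible through the bound on $\lambda^{(3)}_1$), the leading behaviour of $\phi_i$ is encoded by a scalar $\widetilde{\mathrm{Y}}_i$ governed, at leading order and in the absence of coupling, by $\hbar_i\,\widetilde{\mathrm{Y}}_i''+\widetilde{\mathrm{Y}}_i=\partial_t^2 u^\textbf{in}$; the geometric constant $\hbar_i$ emerges from the static single-layer operator as the factor $\mathrm{A}_{\partial\mathrm{D}_i}$. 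This is the undamped oscillator carrying the Minnaert resonance.

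Third, I would reinstate the inter-bubble interaction. Because the cluster is well separated at scale $\mathrm{d}\sim\delta^3$, the off-diagonal contributions $\mathcal{S}_{\partial\mathrm{D}_j}[\phi_j]$ evaluated on $\partial\mathrm{D}_i$ for $j\neq i$ have slowly varying kernels that can be frozen to their point values $\tfrac{1}{4\pi|\mathrm{z}_i-\mathrm{z}_j|}$ with retardation $\mathrm{c}_0^{-1}|\mathrm{z}_i-\mathrm{z}_j|$; this produces precisely the retarded coupling term $\sum_{j\neq i}\mathrm{q}_{ij}\,\tfrac{\mathrm{d}^2}{\mathrm{d}\mathrm{t}^2}\widetilde{\mathrm{Y}}_j(\mathrm{t}-\mathrm{c}_0^{-1}|\mathrm{z}_i-\mathrm{z}_j|)$ of (\ref{matrixmulti}), with $\mathrm{q}_{ij}$ as in (\ref{interaction-matrix}). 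Substituting the leading density back into the single-layer representation and evaluating in the far region $\mathbb{R}^3\setminus\mathrm{K}$ then yields the monopole expansion (\ref{assymptotic-expansion-us}).

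The main obstacle is uniform control of the remainder as $\delta\to0$ while $\mathrm{M}\sim\mathrm{d}^{-3}$ diverges: the $\mathcal{O}(\delta^{2-l})$ error must be estimated in the weighted spaces with constants independent of $\mathrm{M}$. This is exactly where the two smallness conditions of (\ref{inversion-cond}) are used---the first to invert the diagonal self-interaction operator, and the second, a Gershgorin-type bound $\max_i\sum_{j\neq i}\mathrm{q}_{ij}<\hbar_i$, to guarantee that the fully coupled system (\ref{matrixmulti}) stays solvable and that the Neumann series for the multi-scattering operator converges. Establishing $\mathrm{M}$-independent mapping bounds for the retarded potentials on $\mathrm{H}^\mathrm{r}_{0,\sigma}$ and propagating them through the energy estimates for the coupled ODE system is the hardest part; the $\mathcal{C}^9$ regularity demanded of $\lambda$ reflects the number of kernel Taylor terms and time derivatives one must bound to reach order $\delta^{2-l}$.
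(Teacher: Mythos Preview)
The paper does not prove this proposition; it is quoted verbatim as \cite[Theorem 1.1]{Arpan-Sini-JEvEq}, with only a short remark afterwards explaining how the ODE system (\ref{matrixmulti}) is obtained from the original formulation by a Taylor expansion of $\int_{\partial\mathrm{D}_i}\partial_\nu u^\textbf{in}$ and the change of unknown $\widetilde{\mathrm{Y}}_i=\tfrac{1}{|\mathrm{D}_i|}\tfrac{\mathrm{k}_{b_i}}{\rho_{b_i}}\mathrm{Y}_i$. So there is no in-paper proof to compare against; the paper merely asserts that the result was derived in the cited reference ``using the time-domain boundary integral equation method.''

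Your outline is consistent with that description: representation by retarded single-layer potentials, Laplace-domain coercivity in the Lubich framework, rescaling to the reference bubble, projection onto the Magnetization eigenbasis to isolate the resonant mode, and point-interaction approximation of the off-diagonal couplings to recover (\ref{matrixmulti}) and the monopole expansion (\ref{assymptotic-expansion-us}). The role you assign to the two conditions in (\ref{inversion-cond})---invertibility of the self-interaction block via the bound on $\lambda_1^{(3)}$, and a Gershgorin-type diagonal dominance for the coupled ODE system---matches how the paper uses them later (see the discussion around (\ref{4.9}) and the well-posedness argument for the block system). In short, your plan aligns with the method the paper attributes to \cite{Arpan-Sini-JEvEq}, and nothing further can be compared since the proof itself lives in that reference.
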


\begin{remark} 
The aforementioned differential equation is deduced from the original theorem stated in \cite[Theorem 1.1]{Arpan-Sini-JEvEq}, employing Taylor's series expansion and integration by parts:
\begin{align}\label{toy}
\nonumber\int_{\partial\mathrm{D}_i} \partial_\nu u^\textbf{in}(\mathrm{y},\mathrm{\tau}) \, d\sigma_\mathrm{y} 
= \int_{\mathrm{D}_i} \Delta u^\textbf{in}(\mathrm{y},\mathrm{\tau}) \, d\mathrm{y} 
= \frac{\rho_\mathrm{c}}{\mathrm{k}_\mathrm{c}} \int_{\mathrm{D}_i} u_{\mathrm{t}\mathrm{t}}^\textbf{in}(\mathrm{y},\mathrm{\tau}) \, d\mathrm{y} 
= \frac{\rho_\mathrm{c}}{\mathrm{k}_\mathrm{c}}\ \text{vol}(\mathrm{D}_i) u_{\mathrm{t}\mathrm{t}}^\textbf{in}(\mathrm{z},\mathrm{\tau}) + \mathcal{O}(\delta^4),
\end{align}
and using the following change of unknown $\widetilde{\mathrm{Y}}_i = \frac{1}{|\mathrm{D}_i|}\ \frac{\mathrm{k}_{b_i}}{\rho_{b_i}}\mathrm{Y}_i.$
\end{remark}
\noindent
Due to the scaling properties introduced in (\ref{scale}), we see that 
\begin{equation}\label{Minnaert-coefficient}
\hbar_i = \frac{\rho_c}{2\overline{k}_{b_i}}\mathrm{A}_{\partial B_i} + \mathcal{O}(\delta^2)
\end{equation}
and 
\begin{equation}\label{b_j}
b_j = \frac{\rho_c}{\overline{\mathrm{k}}_{b_j}}\text{vol}(B_j)\delta + \mathcal{O}(\delta^3).
\end{equation}
Therefore, neglecting the error terms, we set the scattering coefficient $\hbar_i :=\frac{\rho_c}{2\overline{k}_{b_j}}\mathrm{A}_{\partial B_i} $, which is of order 1. Similarly, for $\mathrm{b}_j$, we have $b_j = \overline{b}_j \delta$. Thus, we set the scattering coefficient $\overline{b}_j:= \frac{\rho_c}{\overline{\mathrm{k}}_{b_j}}\text{vol}(B_j)$. In this work, we take $\hbar_i$ and $\overline{b}_j$ to be the same value for $\hbar:=\hbar_i\ \text{and}\ b := \overline{b}_j, i,j = 1, ..., M$, for every bubble. \footnote{This assumption is not necessary. We could also take the freedom of varying $\hbar_i$ and get an effective medium with the corresponding coefficient as a variable function.} This means that for every used shape $B_i$, of the bubbles, we choose the scaled bulk $\overline{k}_{b_i}$ so that $\hbar_i:=\hbar\ \text{and}\ b:=\overline{b}_j\ \text{for}\ i,j=1, ..., M$.
\bigskip

Notice that the constant $\hbar_i = \frac{\rho_c}{2\overline{k}_{b_i}}\mathrm{A}_{\partial B_i}$ represents the square of the Minnaert frequency of the bubble $D_i$, as detailed in \cite[pp. 10]{AZ18}. This frequency is also characterized, locally in the complex plane and for $\epsilon \ll 1$, as the unique resonance, up to a sign, in the sense of the resolvent of the natural Hamiltonian $k \; \text{div} \rho^{-1}(\mathrm{x}) \nabla$ stated in the weighted space $L^2(\mathbf{R}^3, k^{-1}(x)dx)$ (see \cite{L-S-2024, MPS}). The appearance of this resonant frequency is crucial for deriving the corresponding effective model discussed in Section \ref{ese}.


\subsection{The Generated Dispersive Effective Model}\label{ese}   

\begin{assumption}\label{as2}
    Let $\mathbf{\Omega}$ be a bounded domain of unit volume. Let be given a function $K$ which is $C^1-$ smooth in $\mathbf{\Omega}$ with non-vanishing derivative \footnote{Actually, we only need $K$ to have non-vanishing derivative on the set of discontinuity of the function entire part of $K$, i.e. $[K](\cdot)$. Under these conditions, the level sets \(\big\{ x \in \mathbf{\Omega} \subseteq \mathbb{R}^3 \mid \mathrm{K}(x) = n \big\}\) (where \( n \) is an integer) are 2-dimensional surfaces in \(\mathbb{R}^3\). This property is needed in section \ref{K-function}.}. We divide $\mathbf{\Omega}$ into $[\varepsilon^{-1}]$\footnote{Here, we denote by $[x]$ the unique integer $ n $ such that $ n \leq x < n + 1 $, where $ n $ represents the floor number.} periodically distributed subdomains $\Omega_m,\; m = 1, 2, \ldots , [\varepsilon^{-1}]$. Then, we further divide each $\Omega_m$ into $K(z_m)+1$\footnote{Here $z_m$ is any of the points $z_{m_l}$ inside $\Omega_m$.} subdomains $\Omega_{m_l}$ under the condition that $z_{m_l}\in \mathrm{D}_{m_l} \subseteq \Omega_{m_l}\subseteq \Omega_m.$
\end{assumption}

\begin{figure}[H]
    \begin{center}
 \begin{tikzpicture}[scale=1]

\draw (0,0) ellipse (3cm and 1.5cm);
\node[scale=1.5] at (0,-2.5) {$\mathbf{\Omega}$};


\draw[<-,red, thick] (0.25, 0.25) -- (0.25, 2.5);
\node[above, scale=1] at (0.25,2.5) {$\Omega_m$};
\draw[<-,red, thick] (-0.7, 0.25) -- (-0.7, 2.5);
\node[above, scale=1] at (-0.7,2.5) {$\Omega_i$};

\node (formula) []{};
\draw[-latex,red] ($(formula.north west)+(4.5,0)$) arc
    [
        start angle=160,
        end angle=20,
        x radius=0.5cm,
        y radius =0.5cm
    ] ;

\clip (0,0) ellipse (3cm and 1.5cm); 

\foreach \x in {-3.5,-2.5,...,3.5} {
    \draw (\x,-2) -- (\x,2);
}


\def\xellip(#1){4*sqrt(1 - (#1/2)^2)}

\foreach \y in {-1.75,-1.25,-.75,...,1.75} {
    \draw ({\xellip(\y)},\y) -- ({-\xellip(\y)},\y);
}


\coordinate (a) at (-1.25,0.1); 
\shade[ball color=cyan] (a) circle (0.1); 
\node[above, scale=0.3] at (-1.25,0.05)  {$\bullet$};  

\coordinate (f) at (-.65,0.05); 
\shade[ball color=cyan] (f) circle (0.1);
\node[above, scale=0.3] at (-.65,0)  {$\bullet$};

\coordinate (c) at (-1,-0.05); 
\shade[ball color=cyan] (c) circle (0.1);
\node[above, scale=0.3] at (-1,-0.1)  {$\bullet$};

\coordinate (e) at (-.9,0.15); 
\shade[ball color=cyan] (e) circle (0.1);
\node[above, scale=0.3] at (-.9,0.1)  {$\bullet$};

\coordinate (d) at (-.78,-.1); 
\shade[ball color=cyan] (d) circle (0.1);
\node[above, scale=0.3] at (-.78,-0.15)  {$\bullet$};

\coordinate (b) at (-1.25,-.1); 
\shade[ball color=cyan] (b) circle (0.1);
\node[above, scale=0.3] at (-1.25,-0.15)  {$\bullet$};


\coordinate (M) at (-.3,0.1); 
\shade[ball color=yellow] (M) circle (0.1);
\node[above, scale=0.3] at (-0.3,0.05)  {$\bullet$};

\coordinate (O) at (0,0); 
\shade[ball color=yellow] (O) circle (0.1);
\node[above, scale=0.3] at (0,-0.05)  {$\bullet$};

\coordinate (P) at (-.3,-0.15); 
\shade[ball color=yellow] (P) circle (0.1);
\node[above, scale=0.3] at (-.3,-0.2)  {$\bullet$};

\coordinate (Q) at (0.3,-0.05);
\shade[ball color=yellow] (Q) circle (0.1);
\node[above, scale=0.3] at (0.3,-0.1)  {$\bullet$};

\coordinate (R) at (0.2,0.15); 
\shade[ball color=yellow] (R) circle (0.1);
\node[above, scale=0.3] at (0.2,0.1)  {$\bullet$};


\coordinate (M1) at (1,0.1); 
\shade[ball color=white] (M1) circle (0.1);
\node[above, scale=0.3] at (1,0.05)  {$\bullet$};

\coordinate (O1) at (1.25,0); 
\shade[ball color=white] (O1) circle (0.1);
\node[above, scale=0.3] at (1.25,-0.05)  {$\bullet$};

\coordinate (P1) at (1,-0.15); 
\shade[ball color=white] (P1) circle (0.1);
\node[above, scale=0.3] at (1,-0.2)  {$\bullet$};

\coordinate (Q1) at (0.75,-0.05);
\shade[ball color=white] (Q1) circle (0.1);
\node[above, scale=0.3] at (0.75,-0.1)  {$\bullet$};


\coordinate (M2) at (1.75,0.1); 
\shade[ball color=white] (M2) circle (0.1);
\node[above, scale=0.3] at (1.75,0.05)  {$\bullet$};

\coordinate (O2) at (2.25,0); 
\shade[ball color=white] (O2) circle (0.1);
\node[above, scale=0.3] at (2.25,-0.05)  {$\bullet$};

\coordinate (P2) at (2,-0.15); 
\shade[ball color=white] (P2) circle (0.1);
\node[above, scale=0.3] at (2,-0.2)  {$\bullet$};


\coordinate (M3) at (1.75,0.35); 
\shade[ball color=white] (M3) circle (0.1);
\node[above, scale=0.3] at (1.75,0.3)  {$\bullet$};

\coordinate (O3) at (2,0.45); 
\shade[ball color=white] (O3) circle (0.1);
\node[above, scale=0.3] at (2,.4)  {$\bullet$};

\coordinate (P3) at (2.25,.55); 
\shade[ball color=white] (P3) circle (0.1);
\node[above, scale=0.3] at (2.25,.5)  {$\bullet$};

\coordinate (Q3) at (1.75,.55);
\shade[ball color=white] (Q3) circle (0.1);
\node[above, scale=0.3] at (1.75,.5)  {$\bullet$};

\coordinate (R3) at (2,0.65); 
\shade[ball color=white] (R3) circle (0.1);
\node[above, scale=0.3] at (2,0.6)  {$\bullet$};


\coordinate (M3) at (-1.75,0.45); 
\shade[ball color=white] (M3) circle (0.1);
\node[above, scale=0.3] at (-1.75,0.4)  {$\bullet$};

\coordinate (O3) at (-2.25,0.55); 
\shade[ball color=white] (O3) circle (0.1);
\node[above, scale=0.3] at (-2.25,.5)  {$\bullet$};


\coordinate (M2) at (-1,0.45); 
\shade[ball color=white] (M2) circle (0.1);
\node[above, scale=0.3] at (-1,0.4)  {$\bullet$};

\coordinate (O2) at (-.75,.65); 
\shade[ball color=white] (O2) circle (0.1);
\node[above, scale=0.3] at (-.75,.6)  {$\bullet$};

\coordinate (P2) at (-1.25,.6); 
\shade[ball color=white] (P2) circle (0.1);
\node[above, scale=0.3] at (-1.25,.55)  {$\bullet$};


\coordinate (M2) at (-.25,0.45); 
\shade[ball color=white] (M2) circle (0.1);
\node[above, scale=0.3] at (-.25,0.4)  {$\bullet$};


\coordinate (M3) at (0.75,0.45); 
\shade[ball color=white] (M3) circle (0.1);
\node[above, scale=0.3] at (.75,0.4)  {$\bullet$};

\coordinate (O3) at (1,0.4); 
\shade[ball color=white] (O3) circle (0.1);
\node[above, scale=0.3] at (1,.35)  {$\bullet$};


\coordinate (a) at (1.2,1); 
\shade[ball color=white] (a) circle (0.1); 
\node[above, scale=0.3] at (1.2,0.95)  {$\bullet$};  
\coordinate (f) at (1.35,1.15); 
\shade[ball color=white] (f) circle (0.1);
\node[above, scale=0.3] at (1.35,1.1)  {$\bullet$};
\coordinate (c) at (1.4,.85); 
\shade[ball color=white] (c) circle (0.1);
\node[above, scale=0.3] at (1.4,.8)  {$\bullet$};
\coordinate (e) at (1.1,0.8); 
\shade[ball color=white] (e) circle (0.1);
\node[above, scale=0.3] at (1.1,0.75)  {$\bullet$};
\coordinate (d) at (.75,.95); 
\shade[ball color=white] (d) circle (0.1);
\node[above, scale=0.3] at (.75,0.9)  {$\bullet$};
\coordinate (b) at (.95,.95); 
\shade[ball color=white] (b) circle (0.1);
\node[above, scale=0.3] at (.95,0.9)  {$\bullet$};
\coordinate (b) at (.95,1.15); 
\shade[ball color=white] (b) circle (0.1);
\node[above, scale=0.3] at (.95,1.1)  {$\bullet$};
\coordinate (b) at (.75,1.15); 
\shade[ball color=white] (b) circle (0.1);
\node[above, scale=0.3] at (.75,1.1)  {$\bullet$};


\coordinate (a) at (-.25,1); 
\shade[ball color=white] (a) circle (0.1); 
\node[above, scale=0.3] at (-.25,0.95)  {$\bullet$};  
\coordinate (f) at (0,1.15); 
\shade[ball color=white] (f) circle (0.1);
\node[above, scale=0.3] at (0,1.1)  {$\bullet$};


\coordinate (a) at (-.85,1); 
\shade[ball color=white] (a) circle (0.1); 
\node[above, scale=0.3] at (-.85,0.95)  {$\bullet$};  


\coordinate (a) at (-2.25,0); 
\shade[ball color=white] (a) circle (0.1); 
\node[above, scale=0.3] at (-2.25,-0.05)  {$\bullet$};  


\coordinate (a) at (-2.25,-0.5); 
\shade[ball color=white] (a) circle (0.1); 
\node[above, scale=0.3] at (-2.25,-0.55)  {$\bullet$};  
\coordinate (f) at (-2,-.35); 
\shade[ball color=white] (f) circle (0.1);
\node[above, scale=0.3] at (-2,-.4)  {$\bullet$};
\coordinate (a) at (-1.75,-0.5); 
\shade[ball color=white] (a) circle (0.1); 
\node[above, scale=0.3] at (-1.75,-0.55)  {$\bullet$};  


\coordinate (a) at (-1.25,-0.95); 
\shade[ball color=white] (a) circle (0.1); 
\node[above, scale=0.3] at (-1.25,-1)  {$\bullet$};  


\coordinate (a) at (-.25,-1.15); 
\shade[ball color=white] (a) circle (0.1); 
\node[above, scale=0.3] at (-.25,-1.2)  {$\bullet$};  
\coordinate (f) at (-.25,-.85); 
\shade[ball color=white] (f) circle (0.1);
\node[above, scale=0.3] at (-.25,-.9)  {$\bullet$};
\coordinate (a) at (0,-0.85); 
\shade[ball color=white] (a) circle (0.1); 
\node[above, scale=0.3] at (0,-0.9)  {$\bullet$};  
\coordinate (a) at (.25,-1.15); 
\shade[ball color=white] (a) circle (0.1); 
\node[above, scale=0.3] at (.25,-1.2)  {$\bullet$};  


\coordinate (a) at (.75,-1.15); 
\shade[ball color=white] (a) circle (0.1); 
\node[above, scale=0.3] at (.75,-1.2)  {$\bullet$};  
\coordinate (f) at (1.25,-.85); 
\shade[ball color=white] (f) circle (0.1);
\node[above, scale=0.3] at (1.25,-.9)  {$\bullet$};


\coordinate (a) at (.85,-.6); 
\shade[ball color=white] (a) circle (0.1); 
\node[above, scale=0.3] at (.85,-.65)  {$\bullet$};  


\coordinate (a) at (1.75,-.5); 
\shade[ball color=white] (a) circle (0.1); 
\node[above, scale=0.3] at (1.75,-.55)  {$\bullet$};  


\coordinate (a) at (.25,-.55); 
\shade[ball color=white] (a) circle (0.1); 
\node[above, scale=0.3] at (.25,-.6)  {$\bullet$};  
\coordinate (f) at (.05,-.45); 
\shade[ball color=white] (f) circle (0.1);
\node[above, scale=0.3] at (.05,-.5)  {$\bullet$};


\coordinate (a) at (-0.65,-.55); 
\shade[ball color=white] (a) circle (0.1); 
\node[above, scale=0.3] at (-.65,-.6)  {$\bullet$};  
\coordinate (f) at (-1.25,-.45); 
\shade[ball color=white] (f) circle (0.1);
\node[above, scale=0.3] at (-1.25,-.5)  {$\bullet$};
\coordinate (a) at (-1,-0.65); 
\shade[ball color=white] (a) circle (0.1); 
\node[above, scale=0.3] at (-1,-0.7)  {$\bullet$};  

\foreach \x in {-3.5,-1.5,.5,2.5} {
    \draw (\x,-2) -- (\x,2);
}
\end{tikzpicture}
\begin{tikzpicture}[scale=2] 

\draw[line width=1.5pt] (0,0) -- (1,0) -- (1,1) -- (0,1) -- cycle; 
\draw[line width=1.5pt] (0,1) -- (1,1) -- (1.5,1.5) -- (0.5,1.5) -- cycle; 
\draw[line width=1.5pt] (1,0) -- (1,1) -- (1.5,1.5); 
\draw[line width=1.5pt] (0,0)--(0,1)--(0.5,1.5); 

\draw[dotted, blue, line width=1.2pt] (0.5,1.5) -- (0.5,0.5)--(0,0);

\draw[dotted, blue, line width=1.2pt] (1.5,1.5) -- (1.5,0.5)--(1,0);

\draw[line width=1.5pt] (1,0) -- (2,0) -- (2,1) -- (1,1) -- cycle; 
\draw[line width=1.5pt] (1,1) -- (2,1) -- (2.5,1.5) -- (1.5,1.5) -- cycle; 
\draw[line width=1.5pt] (2,0) -- (2.5,0.5) -- (2.5,1.5) -- (2,1) -- cycle; 

\draw[dotted, blue, line width=1.2pt] (0.5,0.5) -- (2.5,0.5);

\coordinate (A) at (0.75,0.75); 
\coordinate (F) at (1.25,0.75); 
\coordinate (C) at (0.25,0.4); 
\coordinate (D) at (0.8,0.25); 
\coordinate (G) at (1,1.25); 

\coordinate (G1) at (.35,.85); 
\shade[ball color=cyan] (G1) circle (0.15);
\node[above, scale=1] at (0.35,.75) {$z_{i_3}$};

\shade[ball color=cyan] (A) circle (0.15);
\shade[ball color=cyan] (C) circle (0.15);
\shade[ball color=cyan] (D) circle (0.15);
\shade[ball color=cyan] (F) circle (0.15);
\shade[ball color=cyan] (G) circle (0.15);

\node[above, scale=1] at (0.75,0.65) {$z_{i_1}$};
\node[above, scale=1] at (1.25,0.65) {$z_{i_2}$};
\node[above, scale=1] at (0.25,.3) {$z_{i_4}$};
\node[above, scale=1] at (0.8,.2) {$z_{i_5}$};
\node[above, scale=1] at (1,1.1) {$z_{i_6}$};
\node[above, scale=0.3] at (1.2,0.7) {$\bullet$};
\node[above, scale=0.3] at (0.7,0.7) {$\bullet$};
\node[above, scale=0.3] at (0.2,.45) {$\bullet$};
\node[above, scale=0.3] at (0.75,.25) {$\bullet$};
\node[above, scale=0.3] at (1,1.2) {$\bullet$};


\coordinate (B_1) at (1.75,0.75);
\shade[ball color=yellow] (B_1) circle (0.15);
\coordinate (B_2) at (1.75,0.25); 
\shade[ball color=yellow] (B_2) circle (0.15);
\coordinate (B_3) at (2.3,0.5); 
\shade[ball color=yellow] (B_3) circle (0.15);
\coordinate (B_4) at (2.33,1.1); 
\shade[ball color=yellow] (B_4) circle (0.15);
\coordinate (B_5) at (2.15,.8);
\shade[ball color=yellow] (B_5) circle (0.15);

\node[above, scale=.8] at (1.75,0.7) {$z_{m_1}$};
\node[above, scale=0.3] at (1.73,0.7) {$\bullet$};
\node[above, scale=.8] at (1.73,0.2) {$z_{m_2}$};
\node[above, scale=0.3] at (1.73,0.2) {$\bullet$};
\node[above, scale=.8] at (2.27,0.45) {$z_{m_3}$};
\node[above, scale=0.3] at (2.23,0.45) {$\bullet$};
\node[above, scale=.8] at (2.31,1.05) {$z_{m_4}$};
\node[above, scale=0.3] at (2.31,1.05) {$\bullet$};
\node[above, scale=.8] at (2.15,.75) {$z_{m_5}$};
\node[above, scale=0.3] at (2.13,.75) {$\bullet$};

\node[below,scale=1] at (0.5,0.25) {$\Omega_i$};
\node[below, scale=1] at (1.5,0.25) {$\Omega_m$};



\end{tikzpicture}
    \end{center}
    \caption{A schematic illustration for the global and local distribution of the bubbles in $\mathbf{\Omega}$.}
\end{figure}


\begin{remark}
\noindent
    Given that \(\mathbf{\Omega}\) can have an arbitrary shape, the set of cubes \(\Omega_i\) intersecting \(\partial\mathbf{\Omega}\) is not empty unless \(\mathbf{\Omega}\) has a simple shape (such as a cube). In our subsequent analysis, we will require an estimation of the volume of this set. Since each \(\Omega_i\) has a volume of the order \(\varepsilon\), its maximum radius is of the order \(\varepsilon^{\frac{1}{3}}\), implying that the intersecting surfaces with \(\partial\mathbf{\Omega}\) have an area of the order \(\varepsilon^{\frac{2}{3}}\). Considering the area of \(\partial\mathbf{\Omega}\) to be of the order of one, we conclude that the number of such cubes will not exceed the order \(\varepsilon^{-\frac{2}{3}}\). Therefore, the volume of this set will not surpass the order \(\varepsilon \cdot \varepsilon^{-\frac{2}{3}} = \varepsilon^{\frac{1}{3}}\), as \(\varepsilon \to 0\).
\end{remark}

\noindent
We then state the main result of this work.
  
\begin{theorem}\label{non-periodic}
Let the conditions in Assumption \ref{as2} be satisfied. Then, under the following additional condition with \(K_\text{max} := \sup_{z_{m_l}}\big(K(z_{m_l}) + 1\big)\),
\begin{equation}\label{condition}
    \sqrt{K_\text{max}} \sum_{\substack{j=1 \\ j \neq m}}^M \sum_{\substack{i,l=1 \\ i \neq l}}^{[K(z_{m_l}) + 1]} \mathrm{q}_{\mathrm{z}_{m_l},\mathrm{z}_{j_i}} < \min\limits_{1 \le l \le K + 1}\hbar_{m_l},
\end{equation}
we have the following asymptotic expansion for \((x,t) \in \mathbb{R}^3 \setminus \overline{\mathbf{\Omega}} \times (0,T)\):
\begin{align}
    u^s(x,t) - \mathbcal{W}^s(x,t) = \mathcal{O}(\delta^\frac{1}{3}) \; \text{as} \; \delta \to 0,
\end{align}
where \(\mathbcal{W}(x,t) = \mathbcal{W}^s + u^\textbf{in}\) is the solution of the following dispersive acoustic model:
\begin{equation} \label{pde}
\begin{cases}
    \big(c_0^{-1} \partial^2_t - \Delta\big)\Big(\rchi_{\mathbf{\Omega}}\; \hbar\ \partial^2_t \mathbcal{W} + \mathbcal{W}\Big) + \rchi_{\mathbf{\Omega}}\; [K + 1] b \cdot \partial^2_t \mathbcal{W} = 0 & \text{in} \; \mathbb{R}^3 \times (0,T),\\
    \partial_t^i \mathbcal{W}(x,0) = 0 & \text{in} \; \mathbb{R}^3;\; i = 0,1,2,3.
\end{cases}
\end{equation}
The dispersive acoustic model (\ref{pde}) is equivalent to the following integro-differential equation with $\mathrm{P}:=\hbar\; \rchi_{\mathbf{\Omega}}\; \partial^2_{t} \mathbcal W+\mathbcal W$:
\begin{align}\label{intego-differential-main}
    \Big( c_0^{-1}\partial^2_t-\Delta + \frac{b}{\hbar}[K+1]\rchi_\mathbf{\Omega} \Big)\mathrm{P}(x,t) - \rchi_{\mathbf{\Omega}}\hbar^{-\frac{3}{2}}[K+1]\int\limits_0^t\sin\Big(\hbar^{-\frac{1}{2}}(t-\tau)\Big)\mathrm{P}(x,\tau)d\tau = 0.
\end{align}
The above integro-differential equation has a unique solution in $\mathrm{H}^{\mathrm{r}}_{0,\sigma}\big(0,\mathrm{T};\mathrm{H}^1(\mathbb R^3)\big)$ for $\mathrm{P}^\textbf{in} \in \mathrm{H}^{\mathrm{r}+3}_{0,\sigma}\big(0,\mathrm{T};\mathrm{L}^2(\mathbb R^3)\big),$ see Section \ref{indi} for more details.
\end{theorem}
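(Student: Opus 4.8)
The plan is to establish Theorem~\ref{non-periodic} in three stages: the algebraic equivalence between the effective model (\ref{pde}) and the integro-differential formulation (\ref{intego-differential-main}); the well-posedness of the latter via a Laplace-domain coercivity argument; and the convergence estimate $u^s-\mathbcal{W}^s=\mathcal{O}(\delta^{1/3})$ obtained by passing from the discrete coupled system (\ref{matrixmulti}) to the continuum. For the equivalence, setting $\mathrm{P}:=\hbar\,\rchi_{\mathbf{\Omega}}\,\partial_t^2\mathbcal{W}+\mathbcal{W}$ as in the statement, inside $\mathbf{\Omega}$ the field $\mathbcal{W}$ solves the Minnaert oscillator equation $\hbar\,\partial_t^2\mathbcal{W}+\mathbcal{W}=\mathrm{P}$ with vanishing Cauchy data. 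Since the Green's function of $\hbar\,\partial_t^2+1$ is $\hbar^{-1/2}\sin(\hbar^{-1/2}t)$, Duhamel's principle gives
\[
\mathbcal{W}(x,t)=\hbar^{-1/2}\int_0^t\sin\!\big(\hbar^{-1/2}(t-\tau)\big)\,\mathrm{P}(x,\tau)\,d\tau,\qquad x\in\mathbf{\Omega}.
\]
Rewriting (\ref{pde}) as $\big(c_0^{-1}\partial_t^2-\Delta\big)\mathrm{P}+\rchi_{\mathbf{\Omega}}[K+1]b\,\partial_t^2\mathbcal{W}=0$, substituting $\partial_t^2\mathbcal{W}=\hbar^{-1}(\mathrm{P}-\mathbcal{W})$ on $\mathbf{\Omega}$, and inserting the convolution representation of $\mathbcal{W}$ reproduces (\ref{intego-differential-main}); the sine kernel is precisely the resonant signature of the Minnaert frequency $\hbar^{-1/2}$.

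For the well-posedness I would use the Laplace transform in time, exploiting the causal weighted framework $\mathrm{H}^{\mathrm r}_{0,\sigma}$. Under transformation the sine-convolution becomes multiplication by $\hbar^{-1/2}/(s^2+\hbar^{-1})$, so for each $s$ with $\mathrm{Re}\,s=\sigma>0$ one is led to the variational problem: find $\widehat{\mathrm P}(\cdot,s)\in\mathrm{H}^1(\mathbb{R}^3)$ with
\[
\int_{\mathbb{R}^3}\Big[\Big(c_0^{-1}s^2+\tfrac{b}{\hbar}[K+1]\rchi_{\mathbf{\Omega}}-[K+1]\rchi_{\mathbf{\Omega}}\tfrac{\hbar^{-2}}{s^2+\hbar^{-1}}\Big)\widehat{\mathrm P}\,\overline{Q}+\nabla\widehat{\mathrm P}\cdot\nabla\overline{Q}\Big]\,dx=\langle F,Q\rangle,
\]
where $F$ carries the data $\mathrm{P}^{\mathbf{in}}$. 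The decisive point is coercivity uniform for $\sigma$ bounded below: since $s=\sigma+i\omega$ with $\sigma>0$ keeps $s^2+\hbar^{-1}$ away from $0$ (its only zeros are the Minnaert resonances $s=\pm i\hbar^{-1/2}$ on the imaginary axis), the multiplier $\hbar^{-2}/(s^2+\hbar^{-1})$ stays bounded, and testing with $\overline{s}\,\widehat{\mathrm P}$ and taking real parts yields a lower bound degrading polynomially in $|s|$ and linearly in $\sigma$. Lax--Milgram then produces a unique $\widehat{\mathrm P}$ with a polynomial-in-$|s|$ bound, and the Paley--Wiener / Plancherel--P\'olya transfer for retarded problems (in the spirit of Lubich and Sayas) returns the solution to $\mathrm{H}^{\mathrm r}_{0,\sigma}(0,\mathrm T;\mathrm H^1(\mathbb{R}^3))$, the three-derivative gap between $\mathrm{P}^{\mathbf{in}}\in\mathrm{H}^{\mathrm r+3}$ and $\mathrm{P}\in\mathrm{H}^{\mathrm r}$ accounting exactly for the powers of $|s|$ in the stability estimate.

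For the convergence, starting from the discrete expansion of Proposition~\ref{mainthmulti}, I would pass the coupled system (\ref{matrixmulti}) to the continuum under Assumption~\ref{as2}. Using $\mathrm{q}_{ij}=\overline{b}\,\delta/|\mathrm{z}_i-\mathrm{z}_j|$ together with the fact that a cell of volume $\varepsilon$ contains $K(\mathrm{z}_m)+1$ bubbles, the interaction sum converts into a retarded volume potential of $[K+1]\partial_t^2 Y$, turning the discrete matrix equation into the continuum integro-differential equation whose solution generates $\mathbcal{W}^s$. The error then splits into the $\mathcal{O}(\delta^{2-l})$ tail already present in Proposition~\ref{mainthmulti}; a Riemann-sum quadrature error between the discrete sum and the retarded integral, controlled via the $C^1$-regularity of $K$ and the cell radius $\varepsilon^{1/3}$; and the boundary-layer contribution of cells meeting $\partial\mathbf{\Omega}$, whose total volume is $\mathcal{O}(\varepsilon^{1/3})=\mathcal{O}(\delta^{1/3})$ by the boundary-cell estimate recorded above. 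Stability of the discrete system under (\ref{condition}) and of the continuum equation (from the well-posedness just proved) then propagates these local errors to the global bound $u^s-\mathbcal{W}^s=\mathcal{O}(\delta^{1/3})$.

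The hardest part will be this convergence step: justifying the discrete-to-continuum passage for the \emph{retarded} coupling uniformly on $(0,\mathrm{T})$, since the quadrature error for the retarded potential involves the delayed argument $t-c_0^{-1}|x-x'|$ and must be controlled simultaneously in space and time, and since the stability constant of the discrete system must be shown to stay bounded as $M\to\infty$ under (\ref{condition}). By contrast, the equivalence is an elementary ODE computation, and the well-posedness follows the established Laplace-domain template once coercivity with the resonant multiplier is secured.
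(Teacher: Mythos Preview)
Your three-stage plan---equivalence via Duhamel, Laplace-domain coercivity for well-posedness, discrete-to-continuum comparison for convergence---is exactly the paper's architecture, and your identification of the convergence step as the hard part is accurate.

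Two technical points your sketch elides, though, are where the paper spends most of its effort. First, for the non-periodic distribution the paper does not pass directly from the $M$-bubble system (\ref{matrixmulti}) to the scalar Lippmann--Schwinger equation. It first reorganises (\ref{matrixmulti}) as a \emph{block} system indexed by the $[\varepsilon^{-1}]$ periodic cells $\Omega_m$, each block having dimension $[K(z_m)+1]$; the intra-cell couplings are shown to be $\mathcal{O}(d^2)$ and dropped, and condition (\ref{condition}) is precisely the row-block diagonal dominance needed to invert this block operator with a uniform bound. Only then is the associated vector-valued integral equation summed componentwise to produce the scalar equation carrying the factor $[K+1]$. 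Your sentence ``the interaction sum converts into a retarded volume potential of $[K+1]\partial_t^2 Y$'' is the right endpoint but skips this mechanism, which is what makes the non-periodic case tractable. Second, you say the quadrature error is ``controlled via the $C^1$-regularity of $K$'', but the coefficient actually appearing in the volume potential is $[K+1]$, which is only piecewise constant. The paper handles this by invoking the implicit function theorem (using $\nabla K\neq 0$ on the relevant level sets) to conclude that the discontinuity set of $[K+1]$ is a $2$-surface, so the cells meeting it contribute $\mathcal{O}(\varepsilon^{1/3})$ by the same boundary-layer count you already invoke for $\partial\mathbf{\Omega}$. With these two refinements in place, your outline becomes the paper's proof; without them the passage from discrete to continuum in the non-periodic case is not yet justified.
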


\noindent The model (\ref{pde}) can be expressed, recalling that $\mathrm{P}:=\hbar\; \rchi_{\mathbf{\Omega}}\; \partial^2_{t} \mathbcal W+\mathbcal W$, via the dispersive acoustic model:
\begin{align}\label{math-model-dispersive-media}
\begin{cases}   
\partial_{t} \mathrm{U}+\nabla \mathrm{P}& = 0, \\
c_0^{-1}\partial_{t} \mathrm{P}+\nabla \cdot \mathrm{U} & = -b\;[K+1]\ \rchi_{\mathbf{\Omega}}\; \partial_{t} \mathbcal W, \\
 \hbar\; \rchi_{\mathbf{\Omega}}\; \partial^2_{t} \mathbcal W+\mathbcal W-\mathrm{P} &=0.
\end{cases}
\end{align}
Here, $\mathrm{P}$ and $\mathrm{U}$ represent the acoustic pressure and velocity fields respectively. The third equation plays a similar role as the electromagnetic susceptibility in modeling electromagnetic dispersive media. 

\noindent
Notice that when $\hbar\ll 1$, the model (\ref{math-model-dispersive-media}) reduces to the non-dispersive acoustic model
\begin{align}\label{math-model-non-dispersive-media}
\begin{cases}   
\partial_{t} \mathrm{U}+\nabla \mathrm{P}& = 0, \\
\big(c_0^{-1}+b\;[K+1]\ \rchi_{\mathbf{\Omega}}\big)\partial_{t} \mathrm{P}+\nabla \cdot \mathrm{U} & =0.
\end{cases}
\end{align}
Therefore, the coefficient $\hbar$ is responsible for the dispersion effect. Indeed, this coefficient is build up from the Minnaert resonance $\omega_M:=\sqrt{ \frac{\rho_c}{2\overline{k}_b}\mathrm{A}_{\partial B}}$, i.e. $\omega_M^2=\hbar$, as derived from (\ref{Minnaert-coefficient}). The existence of this resonant frequency in the time-harmonic regime translates into a dispersion effect in the time domain regime.

\bigskip

\noindent The derived model, and the needed analysis, provided here give the correct model to the linearized effective model for bubbly media formally derived in the original, and nowadays classical, work \cite{C-M-P-T-2} by  R. Caflisch, M. Miksis, G. Papanicolaou and L. Ting, see the effective equation (2.26) (or its time-harmonic counter part in (4.35)) there and compare it with the equation (\ref{pde}) above. 
\bigskip

\noindent At the analysis level, we mainly use the related Lippmann-Schwinger equation (LSE). We distinguish two main steps.
\begin{enumerate}
\item \textit{Invertibility (i.e. a priori estimate of the solution) of the LSE and regularity property}. This step is handled using the Fourier-Laplace transform and its inverse between the spaces of the type $\mathrm{H}^{\mathrm{r}}_{0,\sigma}\big(0,\mathrm{T};\mathrm{H}^1(\mathbb R^3)\big)$. As $\sigma$ is positive, then in the Fourier-Laplace domain, we can have good control of the coercivity and hence the invertibility in terms of the Fourier-Laplace parameter. Roughly speaking, contrary to the case when we use pure Fourier-transform, here we end up with the analysis in the resolvent set of the related Laplacian in terms of the Fourier-Laplace parameter (while using the Fourier transform, we end up in the spectral set of the Laplacian). The price to pay is to derive estimate in weighted space in terms of time. But these estimates are enough for our purpose. 

\item \textit{Taylor's expansions and matching the linear system (\ref{matrixmulti}) with the LSE}. Before performing such matching, we first rewrite the linear system (\ref{matrixmulti}) according to the distribution of the bubbles as described in Assumption \ref{as2}. The idea here is to reduce this system, originally stated on every bubbles' location point, i.e. on the $D_j$'s, to another system stated only on any selected bubble for each $\Omega_j$, i.e. stated only on the $\Omega_j$'s. The advantage here is that the set of $\Omega_j$ is distributed periodically in $\Omega$. The price to pay is that the reformulated linear system has larger dimensions for their unknowns, see Section \ref{Reformulation-Linear-System} for more explanations. This reformulated linear system indicates which continuous integral equations (and hence Lippmann-Schwinger equation) we can obtain. As the $\Omega_j$'s are periodically distributed in $\Omega$, then matching this linear system with its continuous integral equation needs only the regularity of the solutions, that is derived in step 1.  
\end{enumerate}


\bigskip

We finish this introduction with two observations.
\begin{enumerate}
\item  In the first one, we describe ways how one can ensure the conditions in (\ref{condition}) to generate the effective medium \footnote{In real applications, taking $K$ to be a uniform constant is already satisfactory. In this case, we can choose the shapes of the injected 
 bubbles and also the mass density and the bulk modulus of each bubble to be the same. Therefore, the effective medium can be designed using one type of gas and bubble's shape}. We can simplify the condition mentioned in (\ref{condition}) into the following situation where we define for each bubble, $r_\text{min} = \min\limits_{\substack{1 \le l \le [K + 1] \\ 1 \le m \le M}}\text{radius}(B_{m_l})$,~ 
\\$r_\text{max} = \max\limits_{\substack{1 \le l \le [K + 1] \\ 1 \le m \le M}}\text{radius}(B_{m_l})$ and $A_\text{max} := \max\limits_{\substack{1 \le m,j \le M \\ 1 \le l \le [K(z_{m_l}) + 1]}} \sum\limits_{\substack{j=1 \\ j \neq m}}^M \sum\limits_{\substack{i,l=1 \\ i \neq l}}^{[K(z_{m_l}) + 1]} \frac{\delta}{|z_{m_l} - z_{j_i}|}$.
Additionally, \(A_{\partial B}\) and \(\text{vol}(B)\) are scaled geometric constants with \(B\) being of radius 1.
Let us first recall the condition we obtain
$$\sqrt{K_\text{max}} \sum\limits_{\substack{j=1 \\ j\neq m}}^M \sum\limits_{\substack{i,l=1 \\ i\neq l}}^{[K(z_{m_l})+1]} \mathrm{q}_{\mathrm{z}_{m_l},\mathrm{z}_{j_i}} <  \min\limits_{1\le l\le K+1}\hbar_{m_l},$$
where we set $K_\text{max} := \sup_{z_{m_l}}\big(K(z_{m_l}) + 1\big)$. We also recall that after neglecting the corresponding error terms, we have \( b = \frac{\rho_c}{\overline{\mathrm{k}}_b}\text{vol}(B_{m_l}) \) and \( \hbar_{m_l} = \frac{\rho_c}{2\overline{k}_b}\mathrm{A}_{\partial B_{m_l}} \), respectively. Therefore, we have \( \mathrm{q}_{\mathrm{z}_{m_l},\mathrm{z}_{j_i}} = \frac{\rho_c}{\overline{\mathrm{k}}_b}\text{vol}(B_{m_l})\frac{\delta}{|z_{m_m} - z_{j_i}|} \). Then, if we consider these scaling properties, the aforementioned conditions reduce to:
$$\sqrt{K_\text{max}} \sum\limits_{\substack{j=1 \\ j \neq m}}^M \sum\limits_{\substack{i,l=1 \\ i \neq l}}^{[K(z_{m_l}) + 1]}\frac{\delta}{|z_{m_l} - z_{j_i}|} < \frac{A_{\partial B_{m_l}}}{2\text{vol}(B_{m_l})}.$$
In addition, from the definition of the geometric constant $\mathrm{A}_{\partial B_{m_l}}$ and $\text{vol}(B_{m_l})$, we see that
$$ \mathrm{A}_{\partial B_{m_l}} \le r_\text{min}^2\mathrm{A}_{\partial B}\ \text{and}\ \text{vol}(B_{m_l}) \geq r^3_{\text{max}}\text{vol}(B)$$
where $B$ has a (maximum) radius $1$. Therefore, we arrive at the following condition
\begin{equation}
 K_\text{max}< \Big(\frac{A_{\partial B}}{2A_\text{max}\text{vol}(B)}\Big)^2\frac{r^4_\text{min}}{r^6_\text{max}}.
\end{equation}
In this case, given the function $K$ and the bubbles of the forms $B_{m_l}$ (which can be taken to be the same, i.e. $B_{m_l}$=:$B$), we use gases having mass density and bulk modulus so that the radius $r_\text{min}=:r$ and $r_\text{max}=:r$ should be taken in a way so that  $K_\text{max} <\Big(\frac{A_{\partial B}}{2A_\text{max}\text{vol}(B)}\Big)^2\frac{1}{r^2}$. 

\bigskip

\noindent 
Therefore, if we carefully adjust the material properties of the bubble and have knowledge of \( K_\text{max} \), or if we fine-tune the geometric properties of the bubbles, we can validate the generated effective media. 
\bigskip

\item 
\noindent
As a second observation, let us mention that the condition in (\ref{condition}) is needed to ensure the generation of the effective medium. The usual condition to link the linear algebraic system and the related Lippmann-Schwinger equation is of the form
\begin{equation}\label{discret-continous}
\max_{j=1, ..., M}\Big \vert\frac{1}{M}\sum^M_{j\neq i}\frac{\psi(x_i)}{\vert x_j-x_i\vert}-\int_\Omega\frac{\psi(y)\theta(y)}{\vert x_j-y\vert}dy \Big\vert \longrightarrow 0,~~ M \longrightarrow \infty
\end{equation}
to relate the discrete sum to the integral, where $x_j, j=1, ..., M,$ are points distributed in a bounded $\Omega$ fora  smooth function $\psi$, see \cite{AZ-17, C-M-P-T-1, C-M-P-T-2, FPR} for instance. Here, $\theta$ is intended to model the distribution of the points $x_j$'s. For the special case where these points are distributed periodically, and hence $\theta$ is identical to $1$, this approximation should be possible. In general this is unclear. But, if the points are distributed according to the way how we described it in Assumption \ref{as2}, with $\theta(\cdot):=K(\cdot)+1$, then we have with $\mathrm{N}:= [\varepsilon^{-1}]$ the following property
\begin{equation}\label{discret-continous-localised}
\max_{j=1, ..., \mathrm{N}} \Big \vert \frac{1}{\mathrm{N}} \sum^{N}_{j\neq i}\sum^{[\theta(x^*_{j})]}_{l=1}\frac{\psi(x_{i_l})}{\vert x_{l_j}-x_{i_l}\vert}-\int_\Omega\frac{\psi(y)\big[\theta(y)\big]}{\vert x_j-y\vert}dy \Big\vert \longrightarrow 0, ~~ \mathrm{N} \longrightarrow \infty
\end{equation}
where for every $j=1, ..., N$, $x^*_{j}$ is anyone of the points $x_{j_l}$ in $\Omega_j$.  The property (\ref{discret-continous-localised}) is shown and used in Section \ref{K-natural-number} and Section \ref{K-function}. Observe that, (\ref{discret-continous}) and (\ref{discret-continous-localised}) coincide if $\theta \equiv 1$ (or $K(\cdot)\equiv0$), as $N=M$ in this case. But, they do not coincide if $\theta$ is different from the unity.
\noindent

\end{enumerate}

\bigskip
\noindent
The remaining sections of this work are structured as follows. In Section \ref{indi}, we introduce the necessary function spaces for the mathematical analysis. Following that, we provide the justification for the uniqueness and existence of solutions to problem (\ref{pde}) or (\ref{intego-differential-main}). In Sections \ref{th1} and \ref{th2}, we present the proofs of Theorem \ref{non-periodic} under Assumptions \ref{as2}, respectively. These sections address the asymptotic expansions of the generated equivalent pressure field used to approximate the scattered field produced by the cluster of bubbles.

\section{The well-posedness of the integro-differential equation (\ref{intego-differential-main})}\label{indi} 

\noindent
In this section, our aim is to demonstrate the existence and uniqueness of the time-domain integro-differential equation (\ref{intego-differential-main}). We follow the method outlined by Lubich \cite{lubich}, which involves analyzing the time-domain Lippmann-Schwinger equation using Laplace-Fourier transform. Here, we give the proof for the case when $K \equiv 0$, and a similar proof can be shown for the case when $K \in \mathbb{N}$, or more generally, when $K$ is a variable valued function. 

\noindent
We first rewrite the model (\ref{pde}) in terms of only the pressure $\mathrm{P}:=\hbar\; \rchi_{\mathbf{\Omega}}\; \partial^2_{t} \mathbcal W+\mathbcal W$. We notice that $\mathrm{P}$ satisfies the following integro-differential equation:
\begin{align}\label{intego-differential}
    \Big( c_0^{-1}\partial^2_t-\Delta + \frac{b}{\hbar}\rchi_\mathbf{\Omega} \Big)\mathrm{P}(x,t) - \rchi_{\mathbf{\Omega}}\hbar^{-\frac{3}{2}}\int\limits_0^t\sin\Big(\hbar^{-\frac{1}{2}}(t-\tau)\Big)\mathrm{P}(x,\tau)d\tau = 0.
\end{align}
Now, we study the well-posedness of the above problem. Invertly, it is clear that if $\mathrm{P}$ satisfies (\ref{intego-differential}), then $\mathbcal W$ given by 
$$\begin{cases}
    \hbar\; \rchi_{\mathbf{\Omega}}\; \partial^2_{t} \mathbcal W+\mathbcal W = \mathrm{P},\\
    \mathbcal W(0) = \partial_t \mathbcal W(0) = 0,
\end{cases}$$
satisfies (\ref{pde}). Similarly, the weak formulation of (\ref{pde}) is equivalent to the one of (\ref{intego-differential}).
To begin with, we consider the following elliptic problem: 
\begin{align}\label{laplace}
    \mathrm{c}_0^{-1} \bm{\mathrm{s}}^2 \Tilde{\mathrm{P}}^\textbf{sc}(\mathrm{x},\bm{\mathrm{s}}) - \Delta\Tilde{\mathrm{P}}^\textbf{sc}(\mathrm{x},\bm{\mathrm{s}}) + \frac{b}{\hbar}\rchi_\mathbf{\Omega} \Tilde{\mathrm{P}}^\textbf{sc}(\mathrm{x},\bm{\mathrm{s}}) - \rchi_\mathbf{\Omega}b\ \frac{1}{\hbar}\frac{1}{\hbar\mathbf{\mathrm{s}}^2+1}\Tilde{\mathrm{P}}^\textbf{sc}(\mathrm{x},\bm{\mathrm{s}}) = \rchi_\mathbf{\Omega}b\ \frac{1}{\hbar}\frac{1}{\hbar\mathbf{\mathrm{s}}^2+1}\Tilde{\mathrm{P}}^\textbf{in}(\mathrm{x},\bm{\mathrm{s}}) - \frac{b}{\hbar}\rchi_\mathbf{\Omega} \Tilde{\mathrm{P}}^\textbf{in}(\mathrm{x},\bm{\mathrm{s}}).
\end{align}
The above equation can be seen as the Laplace transform to the equation (\ref{intego-differential}), with respect to the time variable, where $\bm{\mathrm{s}}= \sigma + i\omega \in \mathbb{C}$ is the transform parameter with $\sigma \in \mathbb{R}, \sigma>\sigma_0>0,$ for some constant $\sigma_0$, and $\omega\in \mathbb{R}.$

\noindent
Next, we develop a variational method for the aforementioned problem (\ref{laplace}) and utilize the Lax-Milgram Lemma.
\noindent
By multiplying equation (\ref{laplace}) by the complex conjugate of $\mathrm{v}\in \mathrm{H}^1(\mathbb{R}^3)$, and integrating over $\mathbb R^3$, we obtain a sesquilinear mapping $\mathrm{a}(\Tilde{\mathrm{P}}^\mathrm{s},\mathrm{v}): \mathrm{H}^1(\mathbb R^3) \times \mathrm{H}^1(\mathbb R^3) \to \mathbb{C}$ and an antilinear mapping $\mathrm{b}(\mathrm{v}): \mathrm{H}^1(\mathbb R^3) \to \mathbb{C}$, such that
\begin{align}\label{bi-li}
     \mathrm{a}(\Tilde{\mathrm{P}}^\mathrm{s},\mathrm{v}) = \mathrm{b}(\mathrm{v}) \quad \text{for all}\; \mathrm{v}\in \mathrm{H}^1(\mathbb R^3),
\end{align}
where
\begin{align}
\mathrm{a}(\Tilde{\mathrm{P}}^\mathrm{s},\mathrm{v}) = \int_{\mathbb{R}^3}\mathrm{c}_0^{-1} 
    \bm{\mathrm{s}}^2 \Tilde{\mathrm{P}}^\mathrm{s}\;\overline{\mathrm{v}} d\mathrm{x} + \int_{\mathbb{R}^3} \nabla\Tilde{u}^\mathrm{s}\cdot\overline{\nabla\mathrm{v}} d\mathrm{x} + \int_{\mathbb{R}^3}\frac{b}{\hbar}\rchi_\mathbf{\Omega} \Tilde{\mathrm{P}}^\mathrm{s}\;\overline{\mathrm{v}} d\mathrm{x}- \int_{\mathbb R^3}\rchi_\mathbf{\Omega} \frac{b}{\hbar}\frac{1}{\hbar\mathbf{\mathrm{s}}^2+1}\Tilde{\mathrm{P}}^\mathrm{s}\;\overline{\mathrm{v}} d\mathrm{x},
\end{align}
and
\begin{align}
\nonumber
    \mathrm{b}(\mathrm{v}) = \int_{\mathbb R^3}\rchi_\mathbf{\Omega} \frac{b}{\hbar}\frac{1}{\hbar\mathbf{\mathrm{s}}^2+1}\Tilde{\mathrm{P}}^\mathrm{s}\;\overline{\mathrm{v}} d\mathrm{x} - \int_{\mathbb{R}^3}\frac{b}{\hbar}\rchi_\mathbf{\Omega} \Tilde{\mathrm{P}}^\mathrm{s}\;\overline{\mathrm{v}} d\mathrm{x}.
\end{align}
To verify the coercivity of the above bi-linear form, we choose $\mathrm{v} = \bm{\mathrm{s}}\Tilde{\mathrm{P}}^\mathrm{s}$ and use integration by parts to obtain
\begin{align}
    \mathrm{a}(\Tilde{\mathrm{P}}^\textbf{sc},\bm{\mathrm{s}}\Tilde{\mathrm{P}}^\textbf{sc}) = \int_{\mathbb{R}^3}\mathrm{c}_0^{-1} \overline{\bm{\mathrm{s}}}|\bm{\mathrm{s}}|^2|\Tilde{\mathrm{P}}^\textbf{sc}|^2 d\mathrm{x} + \int_{\mathbb{R}^3} \overline{\bm{\mathrm{s}}}|\nabla\Tilde{\mathrm{P}}^\textbf{sc}|^2 d\mathrm{x} + \int_{\mathbb{R}^3} \frac{b}{\hbar}\rchi_\mathbf{\Omega} \overline{\bm{\mathrm{s}}}|\Tilde{\mathrm{P}}^\textbf{sc}|^2 d\mathrm{x} - \int_{\mathbb{R}^3}\rchi_\mathbf{\Omega} \frac{b}{\hbar}\frac{\overline{\bm{\mathrm{s}}}}{\hbar\mathbf{\mathrm{s}}^2+1}|\Tilde{\mathrm{P}}^\textbf{sc}|^2 d\mathrm{x},
\end{align}
which is after rewriting equivalent to the following
\begin{align}
    \mathrm{a}(\Tilde{\mathrm{P}}^\textbf{sc},\bm{\mathrm{s}}\Tilde{\mathrm{P}}^\textbf{sc}) = \int_{\mathbb{R}^3}\mathrm{c}_0^{-1} \overline{\bm{\mathrm{s}}}|\bm{\mathrm{s}}|^2|\Tilde{\mathrm{P}}^\textbf{sc}|^2 d\mathrm{x} + \int_{\mathbb{R}^3} \overline{\bm{\mathrm{s}}}|\nabla\Tilde{\mathrm{P}}^\textbf{sc}|^2 d\mathrm{x} + \int_{\mathbb{R}^3} \frac{b}{\hbar}\rchi_\mathbf{\Omega} \overline{\bm{\mathrm{s}}}|\Tilde{\mathrm{P}}^\textbf{sc}|^2 d\mathrm{x} - \int_{\mathbb{R}^3}\rchi_\mathbf{\Omega} b\ \Big(\frac{\bm{\mathrm{s}}|\mathbf{s}|^2}{\hbar\mathbf{s}^2}-\frac{\bm{\mathrm{s}}|\mathbf{s}|^2}{\hbar\mathbf{\mathrm{s}}^2+1}\Big)|\Tilde{\mathrm{P}}^\textbf{sc}|^2 d\mathrm{x}.
\end{align}
Therefore, we obtain that
\begin{align}\label{bi}
    \mathrm{a}(\Tilde{\mathrm{P}}^\textbf{sc},\bm{\mathrm{s}}\Tilde{\mathrm{P}}^\textbf{sc}) = \int_{\mathbb{R}^3}\mathrm{c}_0^{-1} \overline{\bm{\mathrm{s}}}|\bm{\mathrm{s}}|^2|\Tilde{\mathrm{P}}^\textbf{sc}|^2 d\mathrm{x} + \int_{\mathbb{R}^3} \overline{\bm{\mathrm{s}}}|\nabla\Tilde{\mathrm{P}}^\textbf{sc}|^2 d\mathrm{x} + \int_{\mathbb{R}^3}\rchi_\mathbf{\Omega} b\ \frac{\bm{\mathrm{s}}|\mathbf{s}|^2}{\hbar\mathbf{\mathrm{s}}^2+1}|\Tilde{\mathrm{P}}^\textbf{sc}|^2 d\mathrm{x}.
\end{align}
and
\begin{align}
    \mathrm{b}(\bm{\mathrm{s}}\Tilde{\mathrm{P}}^\textbf{sc}) &\nonumber= \Big\langle\rchi_\mathbf{\Omega} b\ \frac{\bm{\mathrm{s}}|\mathbf{s}|^2}{\hbar\mathbf{\mathrm{s}}^2\big(\hbar\mathbf{\mathrm{s}}^2+1\big)}\Tilde{\mathrm{P}}^\textbf{in},\Tilde{\mathrm{P}}^\textbf{sc}\Big\rangle -\Big\langle \frac{b}{\hbar}\rchi_\mathbf{\Omega} \overline{\bm{\mathrm{s}}}\Tilde{\mathrm{P}}^\textbf{in},\Tilde{\mathrm{P}}^\textbf{sc}\Big\rangle
    \\ &=\Big\langle\rchi_\mathbf{\Omega} b\ \frac{\bm{\mathrm{s}}|\mathbf{s}|^2}{\hbar\mathbf{\mathrm{s}}^2+1}\Tilde{\mathrm{P}}^\textbf{in},\Tilde{\mathrm{P}}^\textbf{sc}\Big\rangle,
\end{align}
where $\big\langle \cdot,\cdot\big\rangle$ denotes the usual inner product between $\mathrm{H}^{1}(\mathbb R^3)$ and $\mathrm{H}^{-1}(\mathbb R^3).$ Now, after taking real parts, we have
\begin{align}
    \Re\Big(\mathrm{a}(\Tilde{\mathrm{P}}^\textbf{sc},\bm{\mathrm{s}}\Tilde{\mathrm{P}}^\textbf{sc})\Big) = \Re\Bigg(\int_{\mathbb{R}^3}\mathrm{c}_0^{-1} \overline{\bm{\mathrm{s}}}|\bm{\mathrm{s}}|^2|\Tilde{\mathrm{P}}^\textbf{sc}|^2 d\mathrm{x} + \int_{\mathbb{R}^3} \overline{\bm{\mathrm{s}}}|\nabla\Tilde{\mathrm{P}}^\textbf{sc}|^2 d\mathrm{x}\Bigg) + \Re\Big(\int_{\mathbb{R}^3}\rchi_\mathbf{\Omega} b\ \frac{\bm{\mathrm{s}}|\mathbf{s}|^2}{\hbar\mathbf{\mathrm{s}}^2+1}|\Tilde{\mathrm{P}}^\textbf{sc}|^2 d\mathrm{x}\Big).
\end{align}
Next, we deduce
\begin{align}
    \Re\Big(b\frac{\bm{\mathrm{s}}|\mathbf{s}|^2}{\hbar\mathbf{\mathrm{s}}^2+1}\Big) 
    &\nonumber= b\frac{\sigma\Big(\hbar(\sigma^2-\omega^2)+1\Big)+2\sigma\omega^2\hbar}{\Big(\hbar(\sigma^2-\omega^2)+1\Big)^2+4\sigma^2\omega^2\hbar^2}(\sigma^2+\omega^2)
    \\ &\nonumber= b \sigma (\sigma^2+\omega^2) \frac{\hbar(\sigma^2+\omega^2)+1}{\Big(\hbar(\sigma^2-\omega^2)+1\Big)^2+4\sigma^2\omega^2\hbar^2} \ge 0,
\end{align}
which shows that $\displaystyle\Re\Big(\int_{\mathbb{R}^3}\rchi_\mathbf{\Omega} b\ \frac{\bm{\mathrm{s}}|\mathbf{s}|^2}{\hbar\mathbf{\mathrm{s}}^2+1}|\Tilde{\mathrm{P}}^\textbf{sc}|^2 d\mathrm{x}\Big) \ge 0.$

\noindent
Consequently, we show that
\begin{align}\label{one}
    | \mathrm{a}(\Tilde{\mathrm{P}}^\textbf{sc},\bm{\mathrm{s}}\Tilde{\mathrm{P}}^\textbf{sc})| \ge \min \{\sigma,\sigma^3\} \Vert \Tilde{\mathrm{P}}^\textbf{sc}(\cdot,\bm{\mathrm{s}})\Vert^2_{\mathrm{H}^{1}(\mathbb R^3)},
\end{align}
As $\rchi_{\mathbf{\Omega}} \in \mathrm{L}_2(\mathbb R^3)$, we also have 
\begin{align}\label{two}
    |\mathrm{b}(\bm{\mathrm{s}}\Tilde{\mathrm{P}}^\textbf{sc})| \le b\Big\Vert \rchi_\mathbf{\Omega} \frac{\bm{\mathrm{s}}|\mathbf{s}|^2}{\hbar\mathbf{\mathrm{s}}^2+1}\Tilde{\mathrm{P}}^\textbf{in}\Big\Vert_{\mathrm{L}^{2}(\mathbb R^3)} \Vert \Tilde{\mathrm{P}}^\textbf{sc}(\cdot,\bm{\mathrm{s}})\Vert_{\mathrm{H}^{1}(\mathbb R^3)}.
\end{align}
Therefore, considering (\ref{one}) and (\ref{two}) we deduce that the elliptic problem (\ref{laplace}) has unique solution and it satisfies
\begin{align}
    \Vert \Tilde{\mathrm{P}}^\textbf{sc}(\cdot,\bm{\mathrm{s}})\Vert_{\mathrm{H}^{1}(\mathbb R^3)} \le \frac{b}{\hbar}\ \frac{1}{{\sigma}} \Bigg|\frac{\bm{\mathrm{s}}|\mathbf{s}|^2}{\mathbf{\mathrm{s}}^2+\frac{1}{\hbar}}\Bigg| \Big\Vert \rchi_\mathbf{\Omega}\Tilde{\mathrm{P}}^\textbf{in}\Big\Vert_{\mathrm{L}^{2}(\mathbb R^3)}.
\end{align}
Therefore, we use the fact that $\Re(s) = \sigma > \sigma_0; \; \sigma_0 \in \mathbb R^+,$ to deduce the following
\begin{align}\label{fact}
    \Vert \Tilde{\mathrm{P}}^\textbf{sc}(\cdot,\bm{\mathrm{s}})\Vert_{\mathrm{H}^{1}(\mathbb R^3)} \le b \frac{|s|^3}{\sigma}\Big\Vert \rchi_\mathbf{\Omega}\Tilde{\mathrm{P}}^\textbf{in}\Big\Vert_{\mathrm{L}^{2}(\mathbb R^3)}.
\end{align}
Consequently, we use the similar techniques as \cite[Section 4.1]{Arpan-Sini-SIMA} to derive that the equation (\ref{intego-differential}) has a unique solution in $\mathrm{H}^{\mathrm{r}}_{0,\sigma}\big(0,\mathrm{T};\mathrm{H}^1(\mathbb R^3)\big)$ for $\mathbf{P}^\textbf{in} \in \mathrm{H}^{\mathrm{r}+3}_{0,\sigma}\big(0,\mathrm{T};\mathrm{L}^2(\mathbb R^3)\big).$

\noindent
Indeed, let us define the inverse Laplace transform of $\Tilde{\mathrm{P}}^\textbf{sc}(\mathrm{x},\cdot)$ for $\Re(\bm{\mathrm{s}}) =\sigma>0$ as
\begin{align}\label{invlap}
    \mathrm{P}^\textbf{sc}(\mathrm{x},t):= \frac{1}{2\pi i}\int_{\sigma-i\infty}^{\sigma+i\infty}e^{\bm{\mathrm{s}}\mathrm{t}}\Tilde{\mathrm{P}}^\textbf{sc}(\mathrm{x},\bm{\mathrm{s}})d\bm{\mathrm{s}} = \frac{1}{2\pi}\int_{-\infty}^{\infty}e^{(\sigma+i\omega)\mathrm{t}}\Tilde{\mathrm{P}}^\textbf{sc}(\mathrm{x},\sigma+i\omega)d\omega.
\end{align}
Due to the estimate with respect to $`\bm{\mathrm{s}}`$ in (\ref{fact}), $\mathrm{P}^\textbf{sc}(\mathrm{x},\mathrm{t})$ is well-defined. In addition, one can show that $\mathrm{P}^\textbf{sc}(\mathrm{x},\mathrm{t})$ does not depend on $\sigma$ by utilizing a classical method of contour integration, see \cite[pp. 39]{sayas}.
If we consider the Fourier transform w.r.t time variable $\Fourier_\mathrm{t}$, then we have $\Fourier_{\mathrm{t}\to \omega}\big(e^{-\sigma \mathrm{t}} \partial^\mathrm{k}_\mathrm{t}\mathrm{P}^\textbf{sc}(\mathrm{x},\mathrm{t})\big)=\bm{\mathrm{s}}^\mathrm{k}\Tilde{\mathrm{P}}^\textbf{sc}(\mathrm{x},\bm{\mathrm{s}})$ with $\bm{\mathrm{s}}=\sigma+i\omega.$ Thus, we get for $\mathrm{r}\in \mathbb N$ the following
\begin{align}\label{well}
\nonumber
  \|\mathrm{P}^\textbf{sc}\|^2_{\mathrm{H}^{\mathrm{r}}_{0,\sigma}\big((0,\mathrm{T}); \mathrm{H}^1(\mathbb R^3)\big)} & = \int_0^{\mathrm{T}} e^{-2\sigma \mathrm{t}} \sum_{\mathrm{k}=0}^{\mathrm{r}} \mathrm{T}^{2\mathrm{k}} \|\partial^\mathrm{k}_\mathrm{t} \mathrm{P}^\textbf{sc}(\cdot,\mathrm{t})\|^2_{\mathrm{H}^1(\mathbb R^3)} \,d\mathrm{t} \\
  & \nonumber \lesssim \int_{\mathbb{R}_+} \int_{\mathbb{R}^3} e^{-2\sigma \mathrm{t}} \sum_{\mathrm{k}=0}^{\mathrm{r}} \Big[|\partial^\mathrm{k}_\mathrm{t} \mathrm{P}^\textbf{sc}(\mathrm{x},\mathrm{t})|^2 + |\partial^\mathrm{k}_\mathrm{t} \nabla \mathrm{P}^\textbf{sc}(\mathrm{x},\mathrm{t})|^2\Big] \, d\mathrm{x}\, d\mathrm{t} \\
  & \nonumber \lesssim \int_{\mathbb{R}^3} \int_{\mathbb{R}} \sum_{\mathrm{k}=0}^{\mathrm{r}} \Big[\big|\Fourier ( e^{-\sigma \mathrm{t}} \partial^\mathrm{k}_\mathrm{t} \mathrm{P}^\textbf{sc}(\mathrm{x},\mathrm{t})\big|^2+ \big|\Fourier ( e^{-\sigma \mathrm{t}} \partial^\mathrm{k}_\mathrm{t} \nabla u^\mathrm{s}(\mathrm{x},\mathrm{t})\big|^2\Big] \, d\mathrm{t}\, d\mathrm{x} \\
  & \nonumber\lesssim \sum_{\mathrm{k}=0}^{\mathrm{r}} \int_{\sigma+i\mathbb{R}} |\bm{\mathrm{s}}|^{2\mathrm{k}} \|\Tilde{\mathrm{P}}^\text{sc}(\cdot,\bm{\mathrm{s}})\|^2_{\mathrm{H}^1(\mathbb R^3)} \, d\bm{\mathrm{s}} \\
  &  \lesssim \sum_{\mathrm{k}=0}^{\mathrm{r}+3} \int_{\sigma+i\mathbb{R}} |\bm{\mathrm{s}}|^{2\mathrm{k}} \|\rchi_\mathbf{\Omega}\Tilde{\mathrm{P}}^\textbf{in}(\cdot,\bm{\mathrm{s}})\|^2_{\mathrm{L}^{2}(\mathbb R^3)} \, d\bm{\mathrm{s}} \simeq \|\rchi_\mathbf{\Omega}\mathrm{P}^\textbf{in}\|^2_{\mathrm{H}^{\mathrm{r}+3}_{0,\sigma}\big((0,\mathrm{T}); \mathrm{L}^{2}(\mathbb R^3)\big)}.
\end{align}
It is now our aim to show that $\mathrm{P}^\text{sc}$, defined in (\ref{invlap}), is a weak solution to the problem (\ref{intego-differential}). To do so, we consider the following weak formulation of the problem (\ref{intego-differential})
\begin{align}
    \big\langle \mathrm{c}_0^{-1}\partial_t^2\mathrm{P}^\text{sc}(\cdot,\mathrm{t}),\varphi\big\rangle 
    +   \big\langle \nabla \mathrm{P}^\text{sc}(\cdot,\mathrm{t}),\nabla\varphi\big\rangle + \frac{b}{\hbar}\big\langle \mathrm{c}_0^{-1}\mathrm{P}^\text{sc}(\cdot,\mathrm{t}),\varphi\big\rangle 
    - \big\langle \rchi_{\mathbf{\Omega}}\hbar^{-\frac{3}{2}}\sin\big(\hbar^{-\frac{1}{2}}\big)\ast\mathrm{P}^\text{sc}(\cdot,t), \varphi\big\rangle = \big\langle \mathrm{F}(\cdot,\mathrm{t}),\varphi\big\rangle ,
\end{align}
for a.e. $ \mathrm{t}\in (0,\mathrm{T})\; \text{and}\ \forall \mathrm{v}\in \mathrm{H}^1(\mathbb R^3)$.

\noindent
We see that
\begin{align}
    &\nonumber\big\langle \mathrm{c}_0^{-1}\partial_t^2\mathrm{P}^\text{sc}(\cdot,\mathrm{t}),\varphi\big\rangle 
    +   \big\langle \nabla \mathrm{P}^\text{sc}(\cdot,\mathrm{t}),\nabla\varphi\big\rangle + \frac{b}{\hbar}\big\langle \mathrm{c}_0^{-1}\mathrm{P}^\text{sc}(\cdot,\mathrm{t}),\varphi\big\rangle 
    - \big\langle \rchi_{\mathbf{\Omega}}\hbar^{-\frac{3}{2}}\sin\big(\hbar^{-\frac{1}{2}}\big)\ast\mathrm{P}^\text{sc}(\cdot,t), \varphi\big\rangle
    \\ \nonumber&=\nonumber \int_{\mathbb R^3}\mathrm{c}_0^{-1}\int_{\sigma-i\infty}^{\sigma+i\infty}e^{\mathrm{s}\mathrm{t}}\bm{\mathrm{s}}^2\Tilde{\mathrm{P}}^\text{sc}(\mathrm{x},\bm{\mathrm{s}})\overline{\varphi}(\mathrm{x})d\bm{\mathrm{s}}d\mathrm{x}  
    +  \int_{\mathbb R^3}\int_{\sigma-i\infty}^{\sigma+i\infty}e^{\mathrm{s}\mathrm{t}} \nabla\Tilde{\mathrm{P}}^\text{sc}(\mathrm{x},\bm{\mathrm{s}})\cdot\overline{\nabla\varphi}(\mathrm{x})d\bm{\mathrm{s}}d\mathrm{x} 
    + \int_{\mathbb R^3}\frac{b}{\hbar}\int_{\sigma-i\infty}^{\sigma+i\infty}e^{\mathrm{s}\mathrm{t}}\Tilde{\mathrm{P}}^\text{sc}(\mathrm{x},\bm{\mathrm{s}})\overline{\varphi}(\mathrm{x})d\bm{\mathrm{s}}d\mathrm{x}
    \\ \nonumber &- \int_{\mathbb R^3}\int_{\sigma-i\infty}^{\sigma+i\infty}e^{\mathrm{s}\mathrm{t}} \rchi_{\mathbf{\Omega}}\hbar^{-\frac{3}{2}} \widetilde{\sin}\Big(\hbar^{-\frac{1}{2}}\Big)\Tilde{\mathrm{P}}^\text{sc}(\mathrm{x},\bm{\mathrm{s}})\overline{\varphi}(\mathrm{x})d\bm{\mathrm{s}}d\mathrm{x} 
    \\ \nonumber &= \int_{\sigma-i\infty}^{\sigma+i\infty}\int_{\mathbb R^3}e^{\mathrm{s}\mathrm{t}}\Big(\mathrm{c}_0^{-1}\bm{\mathrm{s}}^2\Tilde{\mathrm{P}}^\text{sc}(\mathrm{x},\bm{\mathrm{s}})\overline{\varphi}(\mathrm{x}) + \nabla\Tilde{\mathrm{P}}^\text{sc}(\mathrm{x},\bm{\mathrm{s}})\cdot\overline{\nabla\varphi}(\mathrm{x}) + \frac{b}{\hbar}\Tilde{\mathrm{P}}^\text{sc}(\mathrm{x},\bm{\mathrm{s}})\overline{\varphi}(\mathrm{x})-\rchi_{\mathbf{\Omega}}\hbar^{-\frac{3}{2}} \widetilde{\sin}\Big(\hbar^{-\frac{1}{2}}\Big)\Tilde{\mathrm{P}}^\text{sc}(\mathrm{x},\bm{\mathrm{s}})\overline{\varphi}(\mathrm{x})\Big)d\mathrm{x}d\bm{\mathrm{s}}
    \\ \nonumber &= \int_{\sigma-i\infty}^{\sigma+i\infty} e^{\mathrm{s}\mathrm{t}}\int_{\mathbb R^3}\Bigg(\rchi_{\mathbf{\Omega}}\hbar^{-\frac{3}{2}} \widetilde{\sin}\Big(\hbar^{-\frac{1}{2}}\Big) - \frac{b}{\hbar}\rchi_\mathbf{\Omega}\Bigg)\Tilde{\mathrm{P}}^\text{sc}(x,\mathbf{s})\overline{\varphi} d\mathrm{x} d\bm{\mathrm{s}} \nonumber= \big\langle \mathrm{F}(\cdot,\mathrm{t}),\varphi\big\rangle,
\end{align}
where 
$\big\langle \cdot,\cdot\big\rangle$ denotes the duality pairing between $\mathrm{H}^{1}(\mathbb R^3)$ and $\mathrm{H}^{-1}(\mathbb R^3)$, 
$ \sin\big(\hbar^{-\frac{1}{2}}\big)\ast\mathrm{P}^\text{sc}(\cdot,t):=\displaystyle \int\limits_0^t\sin\Big(\hbar^{-\frac{1}{2}}(t-\tau)\Big)\mathrm{P}^\text{sc}(\cdot,\tau)d\tau$, and 
$\big\langle \mathrm{F}(\cdot,\mathrm{t}),\varphi\big\rangle := \frac{b}{\hbar}\big\langle \mathrm{c}_0^{-1}\mathrm{P}^\text{sc}(\cdot,\mathrm{t}),\varphi\big\rangle 
- \big\langle \rchi_{\mathbf{\Omega}}\hbar^{-\frac{3}{2}}\sin\big(\hbar^{-\frac{1}{2}}\big)\ast\mathrm{P}^\text{sc}(\cdot,\tau), \varphi\big\rangle$
    
\noindent
The proof is thus complete. \qed

\section{Proof of Theorem \ref{non-periodic}: Periodic Distribution i.e. \texorpdfstring{$K\equiv 0$}{K=0}} \label{th1}      

\noindent
We start by recalling the following non-homogeneous second-order matrix differential equation with initial conditions:
\begin{align}\label{compare}
\begin{cases}\displaystyle
    \hbar_i\frac{\mathrm{d}^2}{\mathrm{d}\mathrm{t}^2}\widetilde{\mathrm{Y}}_i(\mathrm{t}) + \widetilde{\mathrm{Y}}_i(\mathrm{t}) + \sum\limits_{\substack{j=1 \\ j\neq i}}^M\mathrm{q}_{ij} \frac{\mathrm{d}^2}{\mathrm{d}\mathrm{t}^2}\widetilde{\mathrm{Y}}_j(\mathrm{t}-\mathrm{c}_0^{-1}|\mathrm{z}_i-\mathrm{z}_j|) = \frac{\partial^2}{\partial t^2} u^\textbf{in} \mbox{ in } (0, \mathrm{T}),
     \\ \widetilde{\mathrm{Y}}_i(\mathrm{0}) = \frac{\mathrm{d}}{\mathrm{d}\mathrm{t}}\widetilde{\mathrm{Y}}_i(\mathrm{0}) = 0.
\end{cases}
\end{align}
\noindent
The main step of the proof lies in comparing (\ref{compare}) with the following Lippmann-Schwinger equation
\begin{equation}\label{effective-equation}
\hbar\; \rchi_{\mathbf{\Omega}}\; \frac{\partial ^2}{\partial t^2} \bm{\mathrm{Y}} (\mathrm{x},\mathrm{t}) + \bm{\mathrm{Y}} (\mathrm{x},\mathrm{t}) + \int_{\mathbf{\Omega}}\frac{b}{4\pi\vert x-y\vert} \frac{\partial ^2}{\partial t^2}{\mathbf{Y} (x, t-c_0^{-1}\vert x-y\vert)} dy = \frac{\partial ^2}{\partial t^2}u^\textbf{in}(\mathrm{x},\mathrm{t}),\mbox{ for } \mathrm{x} \in \mathbb{R}^3, \mathrm{t} \in (0, \mathrm{T}),
\end{equation}
where we include the initial conditions for $\mathbf{Y}$ up to the first order in this equation.

\noindent
Before we move forward, it is essential to establish some regularity results for the solution of the Lippmann-Schwinger equation (\ref{effective-equation}). Observe that (\ref{effective-equation}) is nothing but the integral equation formulation of (\ref{pde}) (or equivalently (\ref{intego-differential})). Nevertheless, for the sake of referencing and its elegant proof, we provide here the invertibility and regularity properties of (\ref{effective-equation}).


\subsection{Well-posedness and regularity of the Lippmann-Schwinger Equation (\ref{effective-equation}) }                 
\noindent
In the first part of this section, our aim is to demonstrate the existence and uniqueness of the time-domain Lippmann-Schwinger equation (\ref{effective-equation}).


\subsubsection{Existence and Operator Estimate}  

\begin{proposition}
The Lippmann-Schwinger equation
\begin{equation}\label{effective-equation_1}
\hbar\; \rchi_{\mathbf{\Omega}}\; \frac{\partial ^2}{\partial t^2} \bm{\mathrm{Y}} (\mathrm{x},\mathrm{t}) + \bm{\mathrm{Y}} (\mathrm{x},\mathrm{t}) + \int_{\mathbf{\Omega}}\frac{b}{4\pi\vert x-y\vert} \frac{\partial ^2}{\partial t^2}{\mathbf{Y} (x, t-c_0^{-1}\vert x-y\vert)} dy = \frac{\partial ^2}{\partial t^2}u^\textbf{in}(\mathrm{x},\mathrm{t}),\mbox{ for } \mathrm{x} \in \mathbb{R}^3, \mathrm{t} \in (0, \mathrm{T}),
\end{equation}
has a unique solution in $\mathrm{H}^{\mathrm{r}}_{0,\sigma}\big(0,\mathrm{T};\mathrm{L}^2(\mathbf{\Omega})\big)$ for $u^\textbf{in} \in \mathrm{H}^{\mathrm{r}+3}_{0,\sigma}\big(0,\mathrm{T};\mathrm{L}^2(\mathbf{\Omega})\big)$ and it satisfies \\ $\Vert \bm{\mathrm{Y}}\Vert_{\mathrm{H}^{\mathrm{r}}_{0,\sigma}\big(0,\mathrm{T};\mathrm{L}^2(\mathbf{\Omega})\big)} \lesssim \Vert u^{\textbf{in}}\Vert_{\mathrm{H}^{\mathrm{r}+3}_{0,\sigma}\big(0,\mathrm{T};\mathrm{L}^2(\mathbf{\Omega})\big)}.$
\end{proposition}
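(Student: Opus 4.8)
The plan is to reuse, almost verbatim, the Laplace--Fourier machinery already deployed in Section~\ref{indi}, now applied to the integral equation instead of the integro-differential one. First I would Laplace-transform (\ref{effective-equation_1}) in time. Under the vanishing initial data the second time derivatives become multiplication by $\bm{\mathrm{s}}^2$, while the retardation $t\mapsto t-c_0^{-1}|x-y|$ turns into the multiplier $e^{-\bm{\mathrm{s}} c_0^{-1}|x-y|}$, so the retarded volume potential collapses to the Helmholtz Newtonian potential $V_{\bm{\mathrm{s}}}[\phi](x):=\int_{\mathbf{\Omega}}\frac{e^{-\bm{\mathrm{s}} c_0^{-1}|x-y|}}{4\pi|x-y|}\phi(y)\,dy$ with complex wavenumber. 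The transformed equation is then the $\bm{\mathrm{s}}$-family of volume integral equations
\begin{equation}\label{eq:plan-LSE-s}
\big(1+\hbar\,\rchi_{\mathbf{\Omega}}\,\bm{\mathrm{s}}^2\big)\,\widetilde{\bm{\mathrm{Y}}}(\cdot,\bm{\mathrm{s}})+b\,\bm{\mathrm{s}}^2\,V_{\bm{\mathrm{s}}}\big[\widetilde{\bm{\mathrm{Y}}}(\cdot,\bm{\mathrm{s}})\big]=\bm{\mathrm{s}}^2\,\widetilde{u}^\textbf{in}(\cdot,\bm{\mathrm{s}}),\qquad \Re\bm{\mathrm{s}}=\sigma>\sigma_0>0.
\end{equation}
Since both the $\hbar\,\rchi_{\mathbf{\Omega}}\bm{\mathrm{s}}^2$-term and the potential are supported in $\mathbf{\Omega}$, it suffices to solve (\ref{eq:plan-LSE-s}) for the density on $\mathbf{\Omega}$ and to read off its values on $\mathbb{R}^3\setminus\mathbf{\Omega}$ from the equation; this is exactly why the natural solution space is $\mathrm{L}^2(\mathbf{\Omega})$ rather than $\mathrm{H}^1(\mathbb{R}^3)$.

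The central step is a resolvent estimate for (\ref{eq:plan-LSE-s}) that is uniform on the half-plane $\Re\bm{\mathrm{s}}=\sigma>\sigma_0$ and grows only polynomially in $|\bm{\mathrm{s}}|$. I would obtain it through the same coercivity mechanism as in Section~\ref{indi}: testing (\ref{eq:plan-LSE-s}) with an $\overline{\bm{\mathrm{s}}}$-weighted conjugate of the solution and using the Green identity
\begin{equation}\label{eq:plan-green}
\int_{\mathbf{\Omega}} V_{\bm{\mathrm{s}}}[\phi]\,\overline{\phi}\,dx=c_0^{-1}\overline{\bm{\mathrm{s}}}^{\,2}\,\|w\|_{\mathrm{L}^2(\mathbb{R}^3)}^2+\|\nabla w\|_{\mathrm{L}^2(\mathbb{R}^3)}^2,\qquad w:=V_{\bm{\mathrm{s}}}[\phi],
\end{equation}
so that the potential contributes a term whose real part is non-negative after the same manipulation that produced $\Re\big(b\,\bm{\mathrm{s}}|\bm{\mathrm{s}}|^2/(\hbar\bm{\mathrm{s}}^2+1)\big)\ge 0$ there. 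Equivalently, and more quickly, I would note that (\ref{eq:plan-LSE-s}) is the integral form of the elliptic problem (\ref{laplace}): applying $c_0^{-1}\bm{\mathrm{s}}^2-\Delta$ identifies $V_{\bm{\mathrm{s}}}[\widetilde{\bm{\mathrm{Y}}}]$ with the $\mathrm{H}^1(\mathbb{R}^3)$ field already controlled by (\ref{fact}), and reading $\widetilde{\bm{\mathrm{Y}}}$ back off (\ref{eq:plan-LSE-s}) then yields $\|\widetilde{\bm{\mathrm{Y}}}(\cdot,\bm{\mathrm{s}})\|_{\mathrm{L}^2(\mathbf{\Omega})}\lesssim |\bm{\mathrm{s}}|^3\,\|\widetilde{u}^\textbf{in}(\cdot,\bm{\mathrm{s}})\|_{\mathrm{L}^2(\mathbf{\Omega})}$ (up to a harmless $\sigma^{-1}$-factor), the cube arising as the combined effect of the $\bm{\mathrm{s}}^2$ source and the resolvent growth. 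Unique solvability for each fixed $\bm{\mathrm{s}}$ follows from Riesz--Fredholm theory: $V_{\bm{\mathrm{s}}}$ is compact on $\mathrm{L}^2(\mathbf{\Omega})$ and the scalar $1+\hbar\bm{\mathrm{s}}^2$ is non-zero for $\sigma>0$, so the operator on the left of (\ref{eq:plan-LSE-s}) is an invertible multiple of the identity plus a compact perturbation, hence Fredholm of index $0$; the a priori bound forces injectivity, and therefore bijectivity.

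With the resolvent bound in hand, the inversion is the standard Lubich--Sayas argument \cite{lubich,sayas}, carried out exactly as in the chain (\ref{well}). I would define $\bm{\mathrm{Y}}$ by the Bromwich integral along $\Re\bm{\mathrm{s}}=\sigma$; the polynomial bound guarantees convergence, $\sigma$-independence (via a contour shift, using holomorphy of $\widetilde{\bm{\mathrm{Y}}}$ in the right half-plane) and causality. The Plancherel identity linking $\|\cdot\|_{\mathrm{H}^{\mathrm{r}}_{0,\sigma}(0,\mathrm{T};\mathrm{L}^2(\mathbf{\Omega}))}$ with $\int_{\sigma+i\mathbb{R}}|\bm{\mathrm{s}}|^{2\mathrm{r}}\|\cdot\|_{\mathrm{L}^2(\mathbf{\Omega})}^2\,d\omega$, combined with the factor $|\bm{\mathrm{s}}|^3$, then upgrades the resolvent bound to $\|\bm{\mathrm{Y}}\|_{\mathrm{H}^{\mathrm{r}}_{0,\sigma}(0,\mathrm{T};\mathrm{L}^2(\mathbf{\Omega}))}\lesssim \|u^\textbf{in}\|_{\mathrm{H}^{\mathrm{r}+3}_{0,\sigma}(0,\mathrm{T};\mathrm{L}^2(\mathbf{\Omega}))}$, the three-derivative loss being precisely the price of the $|\bm{\mathrm{s}}|^3$ growth; uniqueness descends from the injectivity of (\ref{eq:plan-LSE-s}) for each $\bm{\mathrm{s}}$.

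I expect the uniform resolvent estimate of the second paragraph to be the main obstacle: the volume integral operator is neither self-adjoint nor coercive on $\mathrm{L}^2(\mathbf{\Omega})$, so the polynomial bound cannot be read off directly and must be routed through the coercivity of the complex-wavenumber Helmholtz operator via (\ref{eq:plan-green}), with careful bookkeeping of the $\bm{\mathrm{s}}$- and $\sigma$-dependence in order to land the correct exponent. Working in $\mathrm{L}^2(\mathbf{\Omega})$ rather than $\mathrm{H}^1$ is also what keeps the loss at three derivatives: one recovers the density $\bm{\mathrm{Y}}$ from the equation itself, thereby avoiding the extra derivatives that an elliptic-regularity argument on the field would otherwise cost.
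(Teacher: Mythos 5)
Your proposal follows essentially the same route as the paper's proof: Laplace--Fourier transform in time, coercivity of the transformed volume-integral equation obtained by identifying $\widehat{\bm{\mathrm{V}}}_{\mathbf{\Omega}}[\widehat{\mathbf{Y}}]$ with the solution of a complex-frequency Helmholtz problem and testing against $\bm{\mathrm{s}}\widehat{\mathbf{Y}}$ (your Green identity is exactly the paper's key step), the resulting resolvent bound of order $|\bm{\mathrm{s}}|^{3}/\sigma$, and the Lubich--Sayas/Plancherel inversion that converts the $|\bm{\mathrm{s}}|^{3}$ growth into the three-derivative loss between $\mathrm{H}^{\mathrm{r}+3}_{0,\sigma}$ and $\mathrm{H}^{\mathrm{r}}_{0,\sigma}$. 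The only cosmetic deviations --- invoking Riesz--Fredholm for fixed-$\bm{\mathrm{s}}$ solvability where the paper uses Lax--Milgram, and the optional shortcut through the elliptic problem of Section \ref{indi} --- do not change the substance of the argument.
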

\begin{proof}
First, we define the retarded volume potential $\bm{\mathrm{V}}_\mathrm{D}$ by
\begin{align}
    \bm{\mathrm{V}}_\mathbf{\Omega}\big[f\big](\mathrm{x},\mathrm{t}) := \int_\mathbf{\Omega}\frac{\rho_\mathrm{c} }{4\pi|\mathrm{x}-\mathrm{y}|}f(\mathrm{y},\mathrm{t}-\mathrm{c}_0^{-1}|\mathrm{x}-\mathrm{y}|)d\mathrm{y}, \quad (\mathrm{x},\mathrm{t}) \in \mathbf{\Omega} \times(0,\mathrm{T}).
\end{align}
We, then, analyze the LS equation (\ref{effective-equation_1}) with the Laplace-Fourier transform. Let us consider the transform parameter $\mathbf{s}=\sigma+i\omega\in \mathbb C,$ with $\sigma\in \mathbb R^+$ and $\mathrm{D}\in \mathbb R.$ 
\noindent
We then take the Laplace-Fourier transform of the time domain LS equation, we obtain the following LS equation in the Laplace-Fourier domain:
\begin{align}
    (\hbar\;\mathbf{s}^2+1)\;\hat{\mathbf{Y}} + \mathbf{s}^2\; \widehat{\bm{\mathrm{V}}}_\mathbf{\Omega}\big(\hat{\mathbf{Y}}\big) = \mathbf{s}^2\; \widehat{u}^\textbf{in},\; \text{in}\; \mathbf{\Omega}
\end{align}
where, we define $\displaystyle\widehat{u}:= \widehat{u}(x,\mathbf{s})=\int_0^\infty u(x,t)e^{-\mathbf{s}t}dt.$
\noindent
We therefore, consider the following problem of finding $\widehat{\mathbf{Y}}\in \mathrm{L}^2(\mathbf{\Omega}) $ for a given $\widehat{f} \in \mathrm{L}^2(\mathbf{\Omega})$ such that
\begin{align}\label{lax}
    (\hbar\;\mathbf{s}^2+1)\;\hat{\mathbf{Y}} + \mathbf{s}^2\; \widehat{\bm{\mathrm{V}}}_\mathbf{\Omega}\big(\hat{\mathbf{Y}}\big) = \widehat{f} \; \text{in}\; \mathrm{L}^2(\mathbf{\Omega}),
\end{align}
where, $\widehat{f}:= \frac{\rho_\mathrm{b}}{\mathrm{k}_\mathrm{b}}\; \mathbf{s}^2\; \widehat{u}^\textbf{in}.$
\newline
We aim to establish the well-posedness of the aforementioned problem by adopting the approach outlined in \cite{le-monk}. Essentially, we formulate the variational scheme for this problem and employ the Lax-Milgram Lemma to demonstrate its well-posedness.
\newline
Let us proceed for that. We multiply the equation (\ref{lax}) by $\widehat{g} \in \mathrm{L}^2(\mathbf{\Omega})$ and we integrate over $\mathbf{\Omega}$ to obtain the following variational form to find $\widehat{\mathbf{Y}}\in \mathrm{L}^2(\mathbf{\Omega}) $:
\begin{align}
    \mathbb A\big(\hat{\mathbf{Y}},\widehat{g}\big)= \mathbb B(\widehat{g})\; \text{in}\; \mathrm{L}^2(\mathbf{\Omega}),
\end{align}
where, 
$$\mathbb A\big(\hat{\mathbf{Y}},\widehat{g}\big):= \int_{\mathbf{\Omega}}\Big((\hbar\;\mathbf{s}^2+1)\hat{\mathbf{Y}} + \mathbf{s}^2\; \widehat{\bm{\mathrm{V}}}_\mathrm{D}\big(\hat{\mathbf{Y}}\big)\Big)\;\overline{\widehat{g}}\;dy\quad \text{and}\; \mathbb B(\widehat{g}):= \int_{\mathbf{\Omega}} \widehat{f}\;\overline{\widehat{g}}\; dy.$$
Now, to prove the coercivity of the above variational form, we choose $\widehat{g}= \mathbf{s}\widehat{\mathbf{Y}}$ to obtain
\begin{align}
    \mathbb A\big(\hat{\mathbf{Y}},\mathbf{s}\widehat{\mathbf{Y}}\big) = \mathbf{s}|\mathbf{s}|^2\int_{\mathbf{\Omega}}|\hat{\mathbf{Y}}|^2\;dy +\overline{\mathbf{s}}\int_{\mathbf{\Omega}}|\hat{\mathbf{Y}}|^2\;dy + \mathbf{s}|\mathbf{s}|^2 \int_{\mathbf{\Omega}}\widehat{\bm{\mathrm{V}}}_\mathrm{D}\big(\hat{\mathbf{Y}}\big)\;\overline{\widehat{\mathbf{Y}}}\;dy.
\end{align}
After Taking Real part of the above equation we derive
\begin{align}
    \Re\Big(\mathbb A\big(\hat{\mathbf{Y}},\mathbf{s}\widehat{\mathbf{Y}}\big)\Big) = \Re\Big(\mathbf{s}|\mathbf{s}|^2\int_{\mathbf{\Omega}}|\hat{\mathbf{Y}}|^2\;dy\Big)+ \sigma \int_{\mathbf{\Omega}}|\hat{\mathbf{Y}}|^2\;dy + \Re\Big(\mathbf{s}\mathbf{s}^2 \int_{\mathbf{\Omega}}\widehat{\bm{\mathrm{V}}}_\mathrm{D}\big(\hat{\mathbf{Y}}\big)\;\overline{\widehat{\mathbf{Y}}}\;dy\Big), \; \text{with}\; \sigma>0.
\end{align}
We now denote $\mathrm{z} := \widehat{\bm{\mathrm{V}}}_\mathbf{\Omega}\big(\hat{\mathbf{Y}}\big) $ and then, we have
\begin{align}
    -\Delta \mathrm{z} + \bm{\mathrm{s}}^2\mathrm{z} = \hat{\mathbf{Y}}\quad \text{in} \quad \mathbb R^3.
\end{align}
Therefore, we deduce that
\begin{align}
\nonumber
   \int_{\mathbb R^3}\widehat{\bm{\mathrm{V}}}_\mathrm{D}\big(\hat{\mathbf{Y}}\big)\;\overline{\widehat{\mathbf{Y}}}\;dy = \int_{\mathbb{R}^3} \mathrm{z}\ \overline{\Big(-\Delta \mathrm{z} + \bm{\mathrm{s}}^2\mathrm{z}\Big)}d\mathrm{y}
    = \int_{\mathbb{R}^3} |\nabla\mathrm{z}|^2 + \overline{\bm{\mathrm{s}}}^2\;|\mathrm{z}|^2d\mathrm{y} .
\end{align}
Then it follows that
\begin{align}
\nonumber
    \Re\Big(\mathbf{s}\mathbf{s}^2 \int_{\mathbb R^3}\widehat{\bm{\mathrm{V}}}_\mathrm{D}\big(\hat{\mathbf{Y}}\big)\;\overline{\widehat{\mathbf{Y}}}\;dy\Big) = \Re\Big( \bm{\mathrm{s}}|\bm{\mathrm{s}}|^2\int_{\mathbb{R}^3} |\nabla\mathrm{z}|^2d\mathrm{y} + \frac{\overline{\bm{\mathrm{s}}}|\bm{\mathrm{s}}|^4}{\mathrm{c}^2_0} \int_{\mathbb{R}^3} |\mathrm{z}|^2d\mathrm{y}\Big)\ge 0.
\end{align}
We also have $$\Re\Big(\mathbf{s}|\mathbf{s}|^2\int_{\mathbf{\Omega}}|\hat{\mathbf{Y}}|^2\;dy\Big)\ge 0.$$

Thus, we have shown that
\begin{align}\label{LM1}
    \mathbb A\big(\hat{\mathbf{Y}},\mathbf{s}\widehat{\mathbf{Y}}\big) \ge \sigma \Vert \mathbf{Y}\Vert_{\mathrm{L}^2(\mathbf{\Omega})}^2.
\end{align}
We also have
\begin{align}\label{LM2}
    |\mathbb B(\mathbf{s}\widehat{\mathbf{Y}})|:= \int_{\mathbf{\Omega}} \widehat{f}\;\overline{\mathbf{s}\; \widehat{\mathbf{Y}}}\; dy| \nonumber &= \Big||\mathbf{\Omega}|\;\int_{\mathbf{\Omega}}  \mathbf{s}\mathbf{s}^2\; \widehat{u}^\textbf{in}\; \hat{\mathbf{Y}}\;d\mathrm{x}\Big|
    \\ &\le |\bm{\mathrm{s}}|^3\Vert \widehat{u}^\textbf{in}\Vert_{\mathrm{L}^2(\mathbf{\Omega})}\Vert \hat{\mathbf{Y}}\Vert_{\mathrm{L}^2(\mathbf{\Omega})}.
\end{align}
Therefore, we deduce that 
\begin{align}
    \Bigg\Vert \Big((\hbar\;\mathbf{s}^2+1)\mathbf{I} + \mathbf{s}^2\; \widehat{\bm{\mathrm{V}}}_\mathbf{\Omega}\Big)^{-1}\Bigg\Vert_{\mathrm{L}^2(\mathbf{\Omega})\to \mathrm{L}^2(\mathbf{\Omega})} \le \frac{|\bm{\mathrm{s}}|^3}{\sigma}.
\end{align}
Let us then define the following operator:
\begin{align}
    \bm{\mathcal{A}}_\mathbf{\Omega} := \big(\hbar\;\partial_\mathrm{t}^2+1\big)\mathbf{I} + \bm{\mathrm{V}}_\mathbf{\Omega} \partial_\mathrm{t}^2.
\end{align}
Following Lubich's notation \cite{lubich}, and using the techniques of the Laplace-Fourier transform as discussed in \cite{Arpan-Sini-SIMA, sini-wang, le-monk} (or see (\ref{well})), we can show that
\begin{equation*}
    \bm{\mathcal{A}}_{\mathbf{\Omega}}^{-1} : \mathrm{H}^{\mathrm{r}+3}_{0,\sigma}\big(0,\mathrm{T};\mathrm{L}^2(\mathbf{\Omega})\big) \to \mathrm{H}^{\mathrm{r}}_{0,\sigma}\big(0,\mathrm{T};\mathrm{L}^2(\mathbf{\Omega})\big)
\end{equation*}
is bounded.
\newline
Thus, we show that the equation (\ref{effective-equation}) has a unique solution in $\mathrm{H}^{\mathrm{r}}_{0,\sigma}\big(0,\mathrm{T};\mathrm{L}^2(\mathbf{\Omega})\big)$ for $u^\textbf{in} \in \mathrm{H}^{\mathrm{r}+3}_{0,\sigma}\big(0,\mathrm{T};\mathrm{L}^2(\mathbf{\Omega})\big).$  
\newline
This completes the proof.
\end{proof}


\subsubsection{Higher Regularity of the Solution} \label{regularity}   
\noindent
In this section, we analyze the higher regularity of the solution to the problem (\ref{effective-equation_1}). As a consequence of the previous lemma, we state the following Corollary.

\begin{corollary}\label{cor}
    We consider the Lippmann-Schwinger equation (\ref{effective-equation}). Then, we have $\frac{\partial ^3}{\partial t^3}\mathbf{Y}\in\mathrm{C}\big(0,\mathrm{T};L^\infty(\mathbf{\Omega})\big)$ and $\partial_{x_i}\frac{\partial^2}{\partial t^2}\mathbf{Y}\in\mathrm{C}\big(0,\mathrm{T};L^\infty(\mathbf{\Omega})\big)$.
\end{corollary}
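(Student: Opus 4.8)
The plan is to deduce both statements from the invertibility estimate of the preceding proposition together with two elementary facts: the retarded volume potential $\bm{\mathrm{V}}_{\mathbf{\Omega}}$ raises spatial integrability because of its weakly singular kernel, and the purely temporal operator $\hbar\,\partial_t^2+1$ is inverted by a time-convolution preserving $\mathrm{C}\big(0,\mathrm{T};X\big)$ for every Banach space $X$. Since $u^\textbf{in}$ is generated by a signal $\lambda\in\mathcal{C}^9(\mathbb R)$ and the source point $\mathrm{x}_0$ lies off $\overline{\mathbf{\Omega}}$, the datum $u^\textbf{in}$ is $\mathcal{C}^\infty$ in $\mathrm{x}$ on $\overline{\mathbf{\Omega}}$ and belongs to $\mathrm{H}^{\mathrm{r}+3}_{0,\sigma}\big(0,\mathrm{T};\mathrm{L}^2(\mathbf{\Omega})\big)$ for $\mathrm{r}$ as large as needed; the proposition then gives $\mathbf{Y}\in\mathrm{H}^{\mathrm{r}}_{0,\sigma}\big(0,\mathrm{T};\mathrm{L}^2(\mathbf{\Omega})\big)$ with, say, $\mathrm{r}=4$. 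First I would upgrade the time regularity: the one–dimensional embedding $\mathrm{H}^{\mathrm{r}}(0,\mathrm{T})\hookrightarrow\mathcal{C}^{k}\big([0,\mathrm{T}]\big)$, valid for $\mathrm{r}>k+\tfrac12$ and applied with values in the Hilbert space $\mathrm{L}^2(\mathbf{\Omega})$, promotes $\mathbf{Y}\in\mathrm{H}^{4}_{0,\sigma}\big(0,\mathrm{T};\mathrm{L}^2(\mathbf{\Omega})\big)$ to $\partial_t^{j}\mathbf{Y}\in\mathcal{C}\big([0,\mathrm{T}];\mathrm{L}^2(\mathbf{\Omega})\big)$ for $j\le 3$. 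Thus $\partial_t^2\mathbf{Y}$ and $\partial_t^3\mathbf{Y}$ are already continuous in time with values in $\mathrm{L}^2(\mathbf{\Omega})$, and the only remaining task is to trade $\mathrm{L}^2(\mathbf{\Omega})$ for $\mathrm{L}^\infty(\mathbf{\Omega})$ in space.

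For the $\mathrm{L}^\infty$–in–space statement I would use the equation itself. On $\mathbf{\Omega}$ the identity (\ref{effective-equation_1}) reads $\hbar\,\partial_t^2\mathbf{Y}+\mathbf{Y}=\partial_t^2 u^\textbf{in}-\bm{\mathrm{V}}_{\mathbf{\Omega}}\big[\partial_t^2\mathbf{Y}\big]$, up to the harmless constant ($b$ in place of $\rho_{\mathrm c}$ in the kernel). By Cauchy–Schwarz and the membership $|\mathrm{x}-\mathrm{y}|^{-1}\in\mathrm{L}^2(\mathbf{\Omega})$ (the exponent $2$ being subcritical in $\mathbb R^3$), the retarded potential maps $\mathcal{C}\big(0,\mathrm{T};\mathrm{L}^2(\mathbf{\Omega})\big)$ boundedly into $\mathcal{C}\big(0,\mathrm{T};\mathrm{L}^\infty(\mathbf{\Omega})\big)$, so $\bm{\mathrm{V}}_{\mathbf{\Omega}}\big[\partial_t^2\mathbf{Y}\big]\in\mathcal{C}\big(0,\mathrm{T};\mathrm{L}^\infty(\mathbf{\Omega})\big)$; together with the spatial smoothness of $\partial_t^2 u^\textbf{in}$ this shows $\hbar\,\partial_t^2\mathbf{Y}+\mathbf{Y}=:G\in\mathcal{C}\big(0,\mathrm{T};\mathrm{L}^\infty(\mathbf{\Omega})\big)$. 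Since $\mathbf{Y}$ has vanishing data up to first order and all functions are causal, the Duhamel representation $\mathbf{Y}(\cdot,t)=\hbar^{-1/2}\int_0^t\sin\!\big(\hbar^{-1/2}(t-\tau)\big)G(\cdot,\tau)\,d\tau$ and its $t$–derivative are bounded operations on $\mathcal{C}\big(0,\mathrm{T};\mathrm{L}^\infty(\mathbf{\Omega})\big)$, whence $\mathbf{Y},\partial_t\mathbf{Y}\in\mathcal{C}\big(0,\mathrm{T};\mathrm{L}^\infty(\mathbf{\Omega})\big)$ and then $\partial_t^2\mathbf{Y}=\hbar^{-1}(G-\mathbf{Y})\in\mathcal{C}\big(0,\mathrm{T};\mathrm{L}^\infty(\mathbf{\Omega})\big)$. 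Differentiating the relation once in $t$, so that $\hbar\,\partial_t^3\mathbf{Y}+\partial_t\mathbf{Y}=\partial_t G=\partial_t^3 u^\textbf{in}-\bm{\mathrm{V}}_{\mathbf{\Omega}}\big[\partial_t^3\mathbf{Y}\big]$, and invoking $\partial_t^3\mathbf{Y}\in\mathcal{C}\big(0,\mathrm{T};\mathrm{L}^2(\mathbf{\Omega})\big)$ from the first paragraph, yields $\partial_t G\in\mathcal{C}\big(0,\mathrm{T};\mathrm{L}^\infty(\mathbf{\Omega})\big)$ and hence $\partial_t^3\mathbf{Y}=\hbar^{-1}\big(\partial_t G-\partial_t\mathbf{Y}\big)\in\mathcal{C}\big(0,\mathrm{T};\mathrm{L}^\infty(\mathbf{\Omega})\big)$, which is the first assertion.

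The gradient statement follows by applying $\partial_{x_i}$ to the same relation. Setting $w:=\partial_{x_i}\mathbf{Y}$ one obtains $\hbar\,\partial_t^2 w+w=\partial_{x_i}\partial_t^2 u^\textbf{in}-\partial_{x_i}\bm{\mathrm{V}}_{\mathbf{\Omega}}\big[\partial_t^2\mathbf{Y}\big]$. Differentiating the retarded potential in $\mathrm{x}$ produces two contributions: one with the kernel $\partial_{x_i}|\mathrm{x}-\mathrm{y}|^{-1}$, of size $|\mathrm{x}-\mathrm{y}|^{-2}\in\mathrm{L}^1(\mathbf{\Omega})$ (again subcritical in $\mathbb R^3$) acting on $\partial_t^2\mathbf{Y}\in\mathrm{L}^\infty(\mathbf{\Omega})$, and one coming from the retardation $\mathrm{t}-\mathrm{c}_0^{-1}|\mathrm{x}-\mathrm{y}|$, carrying the weakly singular kernel $|\mathrm{x}-\mathrm{y}|^{-1}$ against $\partial_t^3\mathbf{Y}$; both lie in $\mathcal{C}\big(0,\mathrm{T};\mathrm{L}^\infty(\mathbf{\Omega})\big)$ by the bounds just established. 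Thus the right-hand side is in $\mathcal{C}\big(0,\mathrm{T};\mathrm{L}^\infty(\mathbf{\Omega})\big)$, and solving the temporal ODE for $w$ by the same Duhamel argument gives $\partial_{x_i}\mathbf{Y}\in\mathcal{C}\big(0,\mathrm{T};\mathrm{L}^\infty(\mathbf{\Omega})\big)$ and finally $\partial_{x_i}\partial_t^2\mathbf{Y}=\hbar^{-1}\big(\partial_{x_i}G-\partial_{x_i}\mathbf{Y}\big)\in\mathcal{C}\big(0,\mathrm{T};\mathrm{L}^\infty(\mathbf{\Omega})\big)$.

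I expect the only delicate point to be the differentiation of the retarded potential: one must separate the retardation-generated $\partial_t$ term from the kernel-differentiation term and verify that the two resulting kernels, $|\mathrm{x}-\mathrm{y}|^{-2}$ and $|\mathrm{x}-\mathrm{y}|^{-1}$, remain integrable over the three–dimensional $\mathbf{\Omega}$ uniformly in $\mathrm{x}$. This is precisely what confines the spatial gain to a single derivative and fixes the number of time derivatives one must spend, all of which are supplied comfortably by the regularity $\lambda\in\mathcal{C}^9(\mathbb R)$. A fully equivalent route would be to stay in the Laplace–Fourier domain, where $\mathrm{z}:=\widehat{\bm{\mathrm{V}}}_{\mathbf{\Omega}}\big[\hat{\mathbf{Y}}\big]$ solves $-\Delta\mathrm{z}+\bm{\mathrm{s}}^2\mathrm{z}=\hat{\mathbf{Y}}$ and elliptic regularity gives a two–derivative spatial gain at the cost of powers of $\bm{\mathrm{s}}$; I prefer the direct potential estimate above because it avoids tracking the $\bm{\mathrm{s}}$–growth through an inverse Laplace transform.
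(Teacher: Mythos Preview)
Your overall architecture coincides with the paper's: rewrite (\ref{effective-equation_1}) on $\mathbf{\Omega}$ as the temporal ODE $\hbar\,\partial_t^2\mathbf{Y}+\mathbf{Y}=\mathbf{F}$ with $\mathbf{F}=\partial_t^2 u^\textbf{in}-\bm{\mathrm{V}}_{\mathbf{\Omega}}[\partial_t^2\mathbf{Y}]$, upgrade the spatial integrability of $\mathbf{F}$ via the smoothing of the retarded potential, invert the ODE, and then differentiate in $x$ splitting $\partial_{x_i}\bm{\mathrm{V}}_{\mathbf{\Omega}}$ into a kernel-derivative term ($|x-y|^{-2}\in L^1$) and a retardation term ($|x-y|^{-1}$ against $\partial_t^3\mathbf{Y}$). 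The only methodological difference is how the spatial gain is obtained: the paper quotes the mapping $\bm{\mathrm{V}}_{\mathbf{\Omega}}:\mathrm{H}^{r+1}_{0,\sigma}(0,T;L^2)\to \mathrm{H}^{r}_{0,\sigma}(0,T;H^2)$ from \cite{le-monk} and then uses $H^2(\mathbf{\Omega})\hookrightarrow L^\infty(\mathbf{\Omega})$, whereas you try to go from $L^2$ to $L^\infty$ by a direct kernel estimate. Your route is more self-contained, but it hides a subtlety.

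The claim ``the retarded potential maps $\mathcal{C}(0,\mathrm{T};L^2(\mathbf{\Omega}))$ boundedly into $\mathcal{C}(0,\mathrm{T};L^\infty(\mathbf{\Omega}))$ by Cauchy--Schwarz and $|x-y|^{-1}\in L^2(\mathbf{\Omega})$'' is not correct as stated: after Cauchy--Schwarz you are left with $\big(\int_{\mathbf{\Omega}}|f(y,\,t-c_0^{-1}|x-y|)|^2\,dy\big)^{1/2}$, which is a \emph{light-cone trace} and is not controlled by $\sup_s\|f(\cdot,s)\|_{L^2}$. (Take $f(y,s)=\phi_\epsilon(t-s-c_0^{-1}|y|)$ with $\phi_\epsilon$ a smooth bump of unit $L^2$ mass and width $\epsilon$: each time-slice has $L^2$ norm $\mathcal{O}(1)$, yet the cone integral is $\sim\epsilon^{-1}$.) You invoke this mapping twice, once for $\partial_t^2\mathbf{Y}$ and once for $\partial_t^3\mathbf{Y}$. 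The fix is cheap and uses regularity you already have: since $\lambda\in\mathcal{C}^9$ you may take $\mathrm{r}$ one unit larger, so that the density $f$ lies in $H^1_{0,\sigma}(0,\mathrm{T};L^2(\mathbf{\Omega}))=L^2(\mathbf{\Omega};H^1(0,\mathrm{T}))$; then the one-dimensional embedding $H^1(0,\mathrm{T})\hookrightarrow L^\infty(0,\mathrm{T})$ applied for a.e.\ $y$ gives
\[
\int_{\mathbf{\Omega}}|f(y,\,t-c_0^{-1}|x-y|)|^2\,dy\;\le\;\int_{\mathbf{\Omega}}\|f(y,\cdot)\|_{L^\infty(0,\mathrm{T})}^2\,dy\;\lesssim\;\|f\|_{H^1(0,\mathrm{T};L^2(\mathbf{\Omega}))}^2,
\]
after which your Cauchy--Schwarz step goes through. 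With this correction your argument is complete and parallel to the paper's; your gradient step already pairs the kernels with the $L^\infty$ bounds just obtained (exactly as the paper does) and is unaffected.
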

\begin{proof}
We first rewrite (\ref{effective-equation}) as follows:
\begin{align}
    \hbar\; \frac{\partial ^2}{\partial t^2} \bm{\mathrm{Y}} (\mathrm{x},\mathrm{t}) + \bm{\mathrm{Y}} (\mathrm{x},\mathrm{t}) = \frac{\partial ^2}{\partial t^2}u^\textbf{in}(\mathrm{x},\mathrm{t}) - \int_{\mathbf{\Omega}}\frac{b}{4\pi\vert x-y\vert} \frac{\partial ^2}{\partial t^2}{\mathrm{Y} (x, t-c_0^{-1}\vert x-y\vert)} dy.
\end{align}
Given that the incident wave \( u^\textbf{in} \) belongs to the space \( \mathrm{H}^{10}_{0,\sigma}\big(0,T;\mathrm{L}^2(\mathbf{\Omega})\big) \), as per the preceding lemma, it follows that \( \mathbf{Y} \) also belongs to \( \mathrm{H}^7_{0,\sigma}\big(0,T;\mathrm{L}^2(\mathbf{\Omega})\big) \) with \( r=6 \). Consequently, we have \( \frac{\partial^2}{\partial t^2}\mathbf{Y} \in \mathrm{H}^5_{0,\sigma}\big(0,T;\mathrm{L}^2(\mathbf{\Omega})\big) \).

\noindent
It is known that the retarded volume potential belongs to \( H^4_{0,\sigma}\big(0,\mathrm{T};\mathrm{H}^2(\mathbf{\Omega})\big) \) for a density \( \frac{\partial^2}{\partial t^2}\mathbf{Y}\ \text{in}\ \mathrm{H}^5_{0,\sigma}\big(0,\mathrm{T};\mathrm{L}^2(\mathbf{\Omega})\big) \), see \cite[Theorem 3.2]{le-monk}. Due to the required smoothness assumption of the incident wave field, we have \( \mathbf{F} \in H^4_{0,\sigma}\big(0,\mathrm{T};\mathrm{H}^2(\mathbf{\Omega})\big) \), where we denote
\begin{equation}
    \mathbf{F}(x,t) :=  \frac{\partial ^2}{\partial t^2}u^\textbf{in}(\mathrm{x},\mathrm{t}) - \int_{\mathbf{\Omega}}\frac{b}{4\pi\vert x-y\vert} \frac{\partial ^2}{\partial t^2}{\mathrm{Y} (y, t-c_0^{-1}\vert x-y\vert)} dy.
\end{equation}
Next, we write the above equation as a second-order non-homogeneous ordinary differential equation:
\begin{align}
    \begin{cases}\displaystyle
    \hbar\frac{\mathrm{d}^2}{\mathrm{d}\mathrm{t}^2}\mathbf{Y}(\cdot,\mathrm{t}) + \mathbf{Y}(\cdot,\mathrm{t}) = \mathbf{F}(\cdot,\mathrm{t}) \mbox{ in } (0, \mathrm{T}),
     \\ \mathbf{Y}(\mathrm{0}) = \frac{\mathrm{d}}{\mathrm{d}\mathrm{t}}\mathbf{Y}(\mathrm{0}) = 0,
\end{cases}
\end{align}
Hence, as we have $\mathbf{F}\in \mathrm{H}^4_{0,\sigma}\big(0,\mathrm{T};H^2(\mathbf{\Omega})\big)$, by Sobolev embedding, we deduce that $\mathbf{F}\in\mathrm{H}^4_{0,\sigma}\big(0,\mathrm{T};L^\infty(\mathbf{\Omega})\big),$ see \cite{sobolev regularity}, which consequently implies from the well-posedness of the non-homogeneous differential equation that $\mathbf{Y}\in\mathrm{H}^4_{0,\sigma}\big(0,\mathrm{T};L^\infty(\mathbf{\Omega})\big).$ In particular, by utilizing the Sobolev embedding \( H^4(0,T) \xhookrightarrow{} C^3(0,T) \), we infer that $\frac{\partial ^3}{\partial t^3}\mathbf{Y}\in\mathrm{C}\big(0,\mathrm{T};L^\infty(\mathbf{\Omega})\big).$

\noindent
So, we obtain that $\mathbf{F}\in\mathrm{H}^4_{0,\sigma}\big(0,\mathrm{T};L^\infty(\mathbf{\Omega})\big)$. Therefore, we have the following after taking the partial derivative with respect to \( x \):
\begin{align}
    &\nonumber\big|\partial_{x_i} \mathbf{F}(x,t)\big| 
    \\ &\nonumber\lesssim \int_{\mathbf{\Omega}}\frac{b}{4\pi\vert x-y\vert^2} \Big|\frac{\partial ^2}{\partial t^2}{\mathrm{Y} (y, t-c_0^{-1}\vert x-y\vert)}\Big| dy + \int_{\mathbf{\Omega}}\frac{1}{4\pi\vert x-y\vert} \Big|\frac{\partial ^3}{\partial t^3}{\mathrm{Y} (y, t-c_0^{-1}\vert x-y\vert)}\Big| dy + \big|\partial_x u^\textbf{in}(x,t)\big|
    \\ &\nonumber\lesssim \int_{\mathbf{\Omega}}\frac{dy}{4\pi\vert x-y\vert^2} \Vert\frac{\partial ^2}{\partial t^2}{\mathrm{Y} (y, t-c_0^{-1}\vert x-y\vert)}\Vert_{\mathrm{C}\big(0,\mathrm{T};L^\infty(\mathbf{\Omega})\big)} + \Big(\int_{\mathbf{\Omega}}\frac{dy}{4\pi\vert x-y\vert^2}\Big)^\frac{1}{2} \Vert\frac{\partial ^3}{\partial t^3}{\mathrm{Y} (y, t-c_0^{-1}\vert x-y\vert)}\Vert_{\mathrm{C}\big(0,\mathrm{T};L^\infty(\mathbf{\Omega})\big)} + \mathcal{O}(1)
    \\ &= \mathcal{O}(1),
\end{align}
which implies that $\partial_{x_i}\mathbf{F}\in\mathrm{H}^4_{0,\sigma}\big(0,\mathrm{T};L^\infty(\mathbf{\Omega})\big).$ Therefore, if we repeat the above steps, i.e., using the well-posedness of the non-homogeneous differential equation, we have that $\partial_{x_i}\mathbf{Y} \in C^3\big(0,\mathrm{T};\mathrm{L}^{\infty}(\mathbf{\Omega})\big)$ or, in particular, $\partial_{x_i}\frac{\partial ^2}{\partial t^2}\mathbf{Y} \in C\big(0,\mathrm{T};\mathrm{L}^{\infty}(\mathbf{\Omega})\big).$

\noindent
This completes the proof.
\end{proof}


\subsection{End of the Proof of Theorem \ref{non-periodic}. for \texorpdfstring{$\mathrm{K} \equiv 0$}{K=0}}
 \label{end}   

\noindent
We start this section by estimating $\displaystyle\sum_{\substack{i=1}}^M|\widetilde{\mathbf{Y}}_i(t)-\mathbf{Y}(z_i,t)|^2$, where $(\widetilde{\mathbf{Y}}_i)_{i=1}^M$ is the vector solution to the non-homogeneous second-order matrix differential equation (\ref{matrixmulti}) and $\mathbf{Y}(z_i,t)$, $i=1,\ldots,M$, is the solution of the corresponding Lippmann-Schwinger equation (\ref{effective-equation}) with zero initial conditions, respectively.

\noindent
Let us rewrite the expression (\ref{effective-equation}) in a discretized form at $\mathrm{x}=\mathrm{z}_i$
\begin{align}
    \hbar\; \frac{\partial ^2}{\partial t^2} \bm{\mathbf{Y}} (\mathrm{z}_i,\mathrm{t}) + \bm{\mathbf{Y}} (\mathrm{z}_i,\mathrm{t}) + \sum\limits_{\substack{j=1 \\ j\neq i}}^M \frac{b}{4\pi|z_i-z_j|}\; \varepsilon\; \frac{\partial ^2}{\partial t^2}{\mathbf{Y} (z_i, t-c_0^{-1}|z_i-z_j|)} = \frac{\partial ^2}{\partial t^2}u^\textbf{in}(z_i,\mathrm{t}) + E_{(\mathbf{1})} + E_{(\mathbf{2})} + E_{(\mathbf{3})},
\end{align}
where,
$$E_{(\mathbf{1})}:= -\int_{\mathbf{\Omega}\setminus\bigcup\limits_{j=1}^{[\varepsilon^{-1}]}\Omega_j}\frac{b}{4\pi\vert z_i-y\vert} \frac{\partial ^2}{\partial t^2}{\mathbf{Y} (z_i, t-c_0^{-1}\vert z_i-y\vert)} dy,$$
$$E_{(\mathbf{2})}:=-\int_{\Omega_i}\frac{b}{4\pi\vert z_i-y\vert} \frac{\partial ^2}{\partial t^2}{\mathbf{Y} (z_i, t-c_0^{-1}\vert z_i-y\vert)} dy,$$
and
$$E_{(\mathbf{3})}:=-\sum\limits_{\substack{j=1 \\ j\neq i}}^{[\varepsilon^{-1}]} \int_{\Omega_i}\frac{b}{4\pi|z_i-y|} \frac{\partial ^2}{\partial t^2}{\mathbf{Y} (y, t-c_0^{-1}|z_i-y|)}\;dy + \sum\limits_{\substack{j=1 \\ j\neq i}}^{[\varepsilon^{-1}]} \frac{b}{4\pi|z_i-z_j|}\;\varepsilon\; \frac{\partial ^2}{\partial t^2}{\mathbf{Y} (z_i, t-c_0^{-1}|z_i-z_j|)}$$
\noindent
Before we estimate the terms mentioned earlier, let us first recall the following lemma.
\begin{lemma}\cite{habib-sini} \label{counting}
    \textbf{A Counting Lemma.} For any arbitrary distribution of the points $z_j, j=1, ..., M$, in a bounded domain of $\mathbb{R}^3$, (with a given minimum distance $d$ between them), we have the following estimates uniformly with respect to $j$: 
    \begin{align}
        \sum\limits_{\substack{j=1 \\ j\neq i}}^{M}\frac{1}{|z_i-z_j|^k} = 
        \begin{cases}
            \mathcal{O}(d^{-3})\; &\text{if}\; k<3 \\
            \mathcal{O}\Big(d^{-3}(1+|\log(d)|)\Big)\; & \text{if}\; k=3 \\
            \mathcal{O}(d^{-k})\; &\text{if}\; k>3. \\
        \end{cases}
    \end{align}
\end{lemma}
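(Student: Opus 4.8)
The plan is to organize the points $z_j$ into concentric annular shells centered at $z_i$ and to combine a packing (disjoint-ball) argument with the resulting geometric sum. First I would fix $z_i$ and, for each integer $n \ge 1$, introduce the shell
\[
S_n := \big\{ x \in \mathbb{R}^3 : n\,d \le |x - z_i| < (n+1)\,d \big\}.
\]
Since the ambient domain is bounded with diameter of order one, every $z_j$ with $j \ne i$ lies in some $S_n$ with $1 \le n \le N$, where $N \sim d^{-1}$. For any $z_j \in S_n$ we have $|z_i - z_j|^{-k} \le (n\,d)^{-k}$, so the whole task reduces to counting how many points can populate each shell.

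The key step is the packing estimate. Around each $z_j$ I would place the open ball $B(z_j, d/2)$; because the minimal pairwise distance is $d$, these balls are pairwise disjoint, each of volume $\tfrac{\pi}{6} d^3$. If $z_j \in S_n$, then $B(z_j, d/2)$ is contained in the slightly enlarged shell $\widetilde{S}_n := \{ x : (n - \tfrac12) d \le |x - z_i| < (n + \tfrac32) d \}$, whose volume is of order $n^2 d^3$. Comparing the total volume of the disjoint balls with $|\widetilde{S}_n|$ shows that $S_n$ contains at most $C\,n^2$ points, uniformly in $i$.

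Putting the two facts together gives
\[
\sum_{\substack{j=1 \\ j \ne i}}^{M} \frac{1}{|z_i - z_j|^k} \le \sum_{n=1}^{N} \frac{C\,n^2}{(n\,d)^k} = \frac{C}{d^k} \sum_{n=1}^{N} n^{2-k},
\]
and it remains only to evaluate the elementary sum $\sum_{n=1}^N n^{2-k}$ with $N \sim d^{-1}$ in the three regimes. For $k < 3$ the exponent satisfies $2 - k > -1$, so the sum is comparable to $N^{3-k} \sim d^{k-3}$, which after multiplication by $d^{-k}$ yields $\mathcal{O}(d^{-3})$. For $k = 3$ the sum is the harmonic sum $\sum_{n=1}^N n^{-1} \sim \log N \sim |\log d|$, producing the bound $\mathcal{O}\big(d^{-3}(1 + |\log d|)\big)$. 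For $k > 3$ the series $\sum_n n^{2-k}$ converges, so the partial sum is $\mathcal{O}(1)$ and the bound becomes $\mathcal{O}(d^{-k})$.

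The argument is essentially bookkeeping, so I do not expect a serious obstacle; the only genuinely delicate point is the packing count, where one must enlarge $S_n$ to $\widetilde{S}_n$ so that all the disjoint half-radius balls are captured, and verify that $|\widetilde{S}_n| = \mathcal{O}(n^2 d^3)$ holds uniformly, including for the innermost shells where $n$ is small. The boundary exponent $k = 3$ is what forces the logarithmic factor, and routing all three asymptotic regimes through the single sum $\sum_n n^{2-k}$ is what unifies the statement.
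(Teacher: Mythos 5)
Your proof is correct, and it is essentially the standard argument: the paper itself does not prove this lemma (it is imported by citation from \cite{habib-sini}), and the proof there — as well as the layer-counting estimate this paper reuses for the boundary term $E_{(\mathbf{1})}$ — is exactly your decomposition into annular shells of width $d$ around $z_i$, a disjoint-ball packing bound giving $\mathcal{O}(n^2)$ points in the $n$-th shell, and the evaluation of $d^{-k}\sum_{n=1}^{N} n^{2-k}$ with $N \sim d^{-1}$ in the three regimes. The packing step and the $k=3$ harmonic borderline are handled correctly, so there is nothing to fix.
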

\noindent
To estimate the term $E_{(\mathbf{1})}$, we borrow the idea and notations from \cite{habib-sini, sini-wang}. We address the following two scenarios:


\begin{figure}
\begin{center}
\begin{tikzpicture}[scale=1.2]

\draw (0,0) ellipse (3cm and 1.5cm);
\node[scale=1] at (0,-1.8) {$\mathbf{\Omega}$};

\draw[<-,red, thick] (1.6, 1.15) -- (2.5, 1.5);
\node[above, scale=0.5] at (2.5,1.5) {$\mathbf{A}_{(1)}\ (\text{The region above the blue line})$};

\draw[<-,red, thick] (-1.6, -1.15) -- (-2.5, -1.5);
\node[above, scale=0.5] at (-2.5,-1.75) {$\mathbf{A}_{(2)} (\text{The region below the blue line})$};

\draw[<-,red, thick] (0.1,1.2) -- (1, 1.8);
\node[above, scale=0.5] at (1,1.8) {$\mathrm{D}_i$};

\node[above, scale=.5] at (0,0.8) {$z_i$};
\node[above, scale=0.5] at (0,0.9) {$\bullet$};

\coordinate (B) at (0,1); 
\draw (B) circle (0.2);

\draw[dotted, blue, line width=1.2pt] (-3.5,0.5) -- (3.5,0.5);

\clip (0,0) ellipse (3cm and 1.5cm);

\foreach \x in {-3.5,-2.5,...,3.5} {
    \draw (\x,-2) -- (\x,2);
}
\def\xellip(#1){4*sqrt(1 - (#1/2)^2)}

\foreach \y in {-1.75,-1.25,...,1.75} {
    \draw ({\xellip(\y)},\y) -- ({-\xellip(\y)},\y);
}

\begin{scope}
    \clip (2.5,-1.5) rectangle (3.5,1.5);
    \foreach \i in {-1.5,-1.3,...,3.5} {
        \draw[black, thick] (\i,-0.8) -- ++(3.5,1.5);
    }
\end{scope}
\begin{scope}
    \clip (-2.9,-1.5) rectangle (3.5,-1.25);
    \foreach \i in {-2.9,-2.65,...,0} {
        \draw[black, thick] (\i,-1.7) -- ++(3.5,1);
    }
\end{scope}

\begin{scope}
    \clip (-3.9,-1.5) rectangle (-2.5,2.7);
    \foreach \i in {-3.9,-3.8,...,3.9} {
        \draw[black, thick] (\i,-.9) -- ++(3.5,4.9);
    }
\end{scope}

\begin{scope}
    \clip (-3,1) rectangle (5.5,2);
    \foreach \i in {-3,-2.8,...,2} {
        \draw[black, thick] (\i,1.25) -- ++(5.5,1.5);
    }
\end{scope}

\begin{scope}
    \clip (1.5,0.75) rectangle (5.5,1.5);
    \foreach \i in {-3,-2.8,...,2} {
        \draw[black, thick] (\i,0) -- ++(5.5,1.5);
    }
\end{scope}

\begin{scope}
    \clip (-6.5,0.75) rectangle (-1.5,1.5);
    \foreach \i in {-6.5,-6.3,...,2} {
        \draw[black, thick] (\i,0) -- ++(5.5,1.5);
    }
\end{scope}

\begin{scope}
    \clip (-6.5,-2) rectangle (-1.5,-.75);
    \foreach \i in {-6.5,-6.3,...,2} {
        \draw[black, thick] (\i,-.5) -- ++(5.5,-2);
    }
\end{scope}

\begin{scope}
    \clip (1.5,-.75) rectangle (5.5,-5);
    \foreach \i in {-3,-2.8,...,5} {
        \draw[black, thick] (\i,0) -- ++(5.5,-3);
    }
\end{scope}

\draw (0,0) ellipse (3cm and 1.5cm);

\end{tikzpicture}
\end{center}
\caption{A schematic illustration for the split of the region $\mathbf{\Omega}\setminus\bigcup\limits_{j=1}^{[\varepsilon^{-1}]}\Omega_j.$ } \label{pic1}
\end{figure}    


\begin{enumerate}
    \item When the point $z_i$ is away from the boundary $\partial\mathbf{\Omega}$, the function $|z_i - z|^{-1}$ remains bounded in the vicinity of the boundary. Consequently, in this scenario, we find $E_{(\mathbf{1})} = \mathcal{O}\left(\text{vol}\Big(\mathbf{\Omega}\setminus\bigcup\limits_{j=1}^{[\varepsilon^{-1}]}\Omega_j\Big)\right) = \mathcal{O}(\varepsilon^\frac{1}{3}).$

\item When the point $z_i$ is close to one of the $\Omega_j$'s, touching the boundary $\partial\mathrm{D}$, we divide the estimation into two distinct segments. We designate the portion involving $\Omega_j$'s close to $z_i$ as $\mathbf{A}_{(1)}$, while the remaining segment is referred to as $\mathbf{A}_{(2)}$. The integral over $\mathbf{A}_{(2)}$ can be assessed in a manner similar to that in the previous case. Notably, $\mathbf{A}_{(2)}\subset \mathbf{\Omega}\setminus\bigcup\limits_{j=1}^{[\varepsilon^{-1}]}\Omega_j$, hence the \text{vol} ($\mathbf{A}_{(2)}$) scales as $\varepsilon^\frac{1}{3}$ as $\varepsilon\to 0$.

To estimate the integral over $\mathbf{A}_{(1)}$, we initially evaluate the number of $\Omega_j$'s near $z_i$. It is observed that the $\Omega_j$'s in proximity to $z_i$ are positioned near a small region of the boundary $\partial\mathbf{\Omega}$. Assuming that the boundary is sufficiently smooth, we can approximate this region as flat and centered at $z_i$. Consequently, we partition this flat region into concentric square layers (centered at $z_i$). This construction is illustrated in Figure (\ref{pic1}). In particular, considering the flat region to be of order 1 in terms of the parameter $\varepsilon$, and given that the maximum radius of the squares (or the $\Omega_j$'s) is $\varepsilon^\frac{1}{3}$, the count of layers is at most of order $[\varepsilon^{-\frac{1}{3}}]$. In this context, within the $n$th layer, for $n = 0, \ldots , [\varepsilon^{-\frac{1}{3}}]$, there are at most $(2n + 1)^2$ squares (and hence cubes intersecting the surface). The count of inclusions in the $n^\text{th}$ layer (excluding $n=0$) will be at most $[(2n + 1)^2 - (2n-1)^2]$, and their distance from $\hbar_i$ is at least $n(\varepsilon^\frac{1}{3}-\frac{\varepsilon}{2})$.
 \end{enumerate}
 
\noindent
Therefore, we write the following term as follows:
\begin{align}
    |E_{(\mathbf{1})}| 
    \nonumber&= \Bigg|\int_{\mathbf{\Omega}\setminus\bigcup\limits_{j=1}^{[\varepsilon^{-1}]}\Omega_j}\frac{b}{4\pi\vert z_i-y\vert} \frac{\partial ^2}{\partial t^2}{\mathbf{Y} (z_i, t-c_0^{-1}\vert z_i-y\vert)} dy\Bigg|
    \\ \nonumber &\le \Bigg|\int_{\mathbf{A}_{(1)}}\frac{b}{4\pi\vert z_i-y\vert} \frac{\partial ^2}{\partial t^2}{\mathbf{Y} (z_i, t-c_0^{-1}\vert z_i-y\vert)} dy\Bigg| + \Bigg|\int_{\mathbf{A}_{(2)}}\frac{b}{4\pi\vert z_i-y\vert} \frac{\partial ^2}{\partial t^2}{\mathbf{Y} (z_i, t-c_0^{-1}\vert z_i-y\vert)} dy\Bigg|
    \\ \nonumber &\le \sum\limits_{\substack{j=1}}^{[\varepsilon^{-\frac{1}{3}}]}\frac{1}{d_{ij}}b\;\Vert \frac{\partial ^2}{\partial t^2}{\mathbf{Y}\Vert_{\mathrm{C}^1\big(0,\mathrm{T};\mathrm{L}^\infty(\mathbf{\Omega})\big)}}\text{vol}\big(\Omega_j\big)+ b\;\Vert \frac{\partial ^2}{\partial t^2}{\mathbf{Y}\Vert_{\mathrm{C}^1\big(0,\mathrm{T};\mathrm{L}^\infty(\mathbf{\Omega})\big)}}\text{vol}\big(\mathbf{A}_{(2)}\big)
    \\ \nonumber &\le \mathcal{O}\Big(b\ \sum\limits_{\substack{j=1}}^{[\varepsilon^{-\frac{1}{3}}]}\frac{1}{d_{ij}} + b\ \varepsilon^\frac{1}{3}\Big)
    \\ \nonumber &\le \mathcal{O}\Big(b\varepsilon\ \big[(2n+1)^2-(2n-1)^2\big]\frac{1}{n(\varepsilon^\frac{1}{3}-\frac{\varepsilon}{2})} + b\varepsilon^\frac{1}{3}\Big)
    \\ \nonumber &\le \mathcal{O}\Big(b\varepsilon \mathcal{O}(\varepsilon^{-\frac{2}{3}}) + b\varepsilon^{\frac{1}{3}}\Big)  
\end{align}
Hence, we obtain 
\begin{align}\label{esti1}
    |E_{(\mathbf{1})}|  =  \mathcal{O}\big(b\varepsilon^\frac{1}{3}\big).
\end{align}
Next, since we have $\frac{\partial ^2}{\partial t^2}{\mathbf{Y}}\in \mathrm{C}^1\big(0,\mathrm{T};\mathrm{L}^\infty(\mathbf{\Omega})\big),$ we deduce that
\begin{align}
    |E_{(\mathbf{2})}| \nonumber&= \mathcal{O}\Big(b\;\Vert \frac{\partial ^2}{\partial t^2}{\mathbf{Y}}\Vert_{\mathrm{C}^1\big(0,\mathrm{T};\mathrm{L}^\infty(\mathbf{\Omega})\big)}\int_{\Omega_i}|y-z_i|^{-1}\;dy\Big).
\end{align}
To analyze the term $\displaystyle\int_{\Omega_i}|y-z_i|^{-1}\;dy$, we divide it into two parts:
\begin{align}
    \int_{\Omega_i}|y-z_i|^{-1}\;dy = \int_{B(z_i,r)}|y-z_i|^{-1}\;dy + \int_{\Omega_i\setminus B(z_i,r)}|y-z_i|^{-1}\;dy.
\end{align}
Now, expressing these two terms in polar coordinates, we get:
\begin{align}
    \int_{\Omega_i}|y-z_i|^{-1}\;dy \nonumber&= \int_{B(z_i,r)}|y-z_i|^{-1}\;dy + \int_{\Omega_i\setminus B(z_i,r)}|y-z_i|^{-1}\;dy
    \\ \nonumber &= 2\pi r^2 + \frac{1}{r}(\varepsilon- \frac{4}{3}\pi r^3),\; \text{since}\; \text{vol}(\Omega_i\setminus B(z_i,r)) = \varepsilon- \frac{4}{3}\pi r^3.
\end{align}
Now, as this expression has a critical point at $r_{\text{sol}} = (\frac{3}{4\pi})^\frac{1}{3}\;\varepsilon^\frac{1}{3}$, we conclude that $\displaystyle\int_{\Omega_i}|y-z_i|^{-1}\;dy = \mathcal{O}(\varepsilon^\frac{2}{3}).$ Therefore, we deduce that 
\begin{align}\label{esti2}
    |E_{(\mathbf{2})}| = \mathcal{O}\Big(b\varepsilon^\frac{2}{3}\Big).
\end{align}
Let us now proceed to estimate the third term $E_{(\mathbf{3})}.$ We have
\begin{align}
    E_{(\mathbf{3})} \nonumber&:= -\sum\limits_{\substack{j=1 \\ j\neq i}}^{[\varepsilon^{-1}]}b\int_{\Omega_j} \Bigg[\frac{\partial ^2}{\partial t^2}{\mathbf{Y} (y, t-c_0^{-1}|z_i-y|)} -  \frac{1}{4\pi|z_i-z_j|} \frac{\partial ^2}{\partial t^2}{\mathbf{Y} (z_i, t-c_0^{-1}|z_i-z_j|)}\Bigg]
    \\ \nonumber &= -\sum\limits_{\substack{j=1 \\ j\neq i}}^{[\varepsilon^{-1}]}b\int_{\Omega_j}\Bigg[ \underbrace{\frac{\partial ^2}{\partial t^2}{\mathbf{Y} (y, t-c_0^{-1}|z_i-y|)} \Big[\frac{1}{4\pi|z_i-y|}-\frac{1}{4\pi|z_i-z_j|}\Big]}_{\textbf{err}_{(1)}}
    \\  &+ \underbrace{\frac{1}{4\pi|z_i-z_j|} \Big[\frac{\partial ^2}{\partial t^2}{\mathbf{Y} (y, t-c_0^{-1}|z_i-y|)}-\frac{\partial ^2}{\partial t^2}{\mathbf{Y} (z_i, t-c_0^{-1}|z_i-z_j|)}\Big]}_{\textbf{err}_{(2)}}\Bigg]\;dy.
\end{align}
Then, we estimate $\text{err}_{(1)}$ as
\begin{align}
    \textbf{err}_{(1)} := \Bigg|&\nonumber\sum\limits_{\substack{j=1 \\ j\neq i}}^{[\varepsilon^{-1}]}b\int_{\Omega_j}\frac{\partial ^2}{\partial t^2}{\mathbf{Y} (y, t-c_0^{-1}|z_i-y|)} \Big[\frac{1}{4\pi|z_i-y|}-\frac{1}{4\pi|z_i-z_j|}\Big]dy\Bigg|
    \\ \nonumber &= \Bigg|\frac{1}{4\pi}\sum\limits_{\substack{j=1 \\ j\neq i}}^{[\varepsilon^{-1}]}b\int_{\Omega_j}\frac{\partial ^2}{\partial t^2}{\mathbf{Y} (y, t-c_0^{-1}|z_i-y|)}(y-z_j)\nabla_y\frac{1}{|z_i-z_j^*|}dy\Bigg|,\quad \text{with}\; z_j^*\in \Omega_j
    \\ &=\nonumber \mathcal{O}\Bigg(b\sum\limits_{\substack{j=1 \\ j\neq i}}^{[\varepsilon^{-1}]}\frac{1}{d^2_{ij}}\; \Vert \frac{\partial ^2}{\partial t^2}{\mathbf{Y}\Vert_{\mathrm{C}^1\big(0,\mathrm{T};\mathrm{L}^\infty(\mathbf{\Omega})\big)}}\; \int_{\Omega_j} |y-z_j|dy\Bigg).
\end{align}
Next, we deduce using Taylor's series expansion and $\frac{\partial ^3}{\partial t^3}{\mathbf{Y}}\in \mathrm{C}\big(0,\mathrm{T};\mathrm{L}^\infty(\mathbf{\Omega})\big),$ that
\begin{align}
    &\nonumber \frac{\partial ^2}{\partial t^2}{\mathbf{Y} (y, t-c_0^{-1}|z_i-y|)}-\frac{\partial ^2}{\partial t^2}{\mathbf{Y} (z_i, t-c_0^{-1}|z_i-z_j|)}
    \\ \nonumber &= \frac{\partial ^2}{\partial t^2}{\mathbf{Y} (y, t-c_0^{-1}|z_i-y|)}-\frac{\partial ^2}{\partial t^2}{\mathbf{Y} (z_i, t-c_0^{-1}|z_i-y|)} +\frac{\partial ^2}{\partial t^2}{\mathbf{Y} (z_i, t-c_0^{-1}|z_i-y|)} - \frac{\partial ^2}{\partial t^2}{\mathbf{Y} (z_i, t-c_0^{-1}|z_i-z_j|)}
    \\ \nonumber &= (y-z_i)\;\frac{\partial}{\partial_x}\frac{\partial ^2}{\partial t^2}{\mathbf{Y} (z^*, t-c_0^{-1}|z_i-y|)} + (y-z_j)\; \nabla_y\frac{1}{|z_i-z^*|}\;\frac{\partial ^3}{\partial t^3}{\mathbf{Y} (y, t^*)},
\end{align}
where, $z^*\in \Omega_j$ and $t^*\in (t-c_0^{-1}|z_i-y|,t-c_0^{-1}|z_i-z_j|).$ 
\newline
Therefore, we deduce the following using the fact that $\partial_{x_i}\frac{\partial ^2}{\partial t^2}\mathbf{Y} \in L^\infty\big(0,\mathrm{T};\mathrm{L}^{\infty}(\mathbf{\Omega})\big):$
\begin{align}
    \textbf{err}_{(2)} &\nonumber:= \Bigg|\sum\limits_{\substack{j=1 \\ j\neq i}}^{[\varepsilon^{-1}]}b\int_{\Omega_j}(y-z_i)\;\frac{\partial}{\partial_x}\frac{\partial ^2}{\partial t^2}{\mathbf{Y} (z^*, t-c_0^{-1}|z_i-y|)}dy + \sum\limits_{\substack{j=1 \\ j\neq i}}^{[\varepsilon^{-1}]}b\int_{\Omega_j}(y-z_j)\; \nabla_y\frac{1}{|z_i-z^*|}\;\frac{\partial ^3}{\partial t^3}{\mathbf{Y} (y, t^*)}\Bigg|
    \\ \nonumber &= \mathcal{O}\Bigg(b\sum\limits_{\substack{j=1 \\ j\neq i}}^{[\varepsilon^{-1}]}\frac{1}{d^2_{ij}}\; \Vert \frac{\partial ^3}{\partial t^3}{\mathbf{Y}\Vert_{\mathrm{C}\big(0,\mathrm{T};\mathrm{L}^\infty(\mathbf{\Omega})\big)}}\; \int_{\Omega_j} |y-z_j|dy\Bigg).
\end{align}
Hence, we have 
\begin{align}\label{esti3}
    E_{(\mathbf{3})} = \mathcal{O}\Big(b\sum\limits_{\substack{j=1 \\ j\neq i}}^{[\varepsilon^{-1}]}\frac{1}{d^2_{ij}}\Big)\varepsilon^\frac{4}{3} = \mathcal{O}\Big(b\varepsilon^\frac{1}{3}\Big).
\end{align}
Gathering (\ref{esti1}), (\ref{esti2}) and (\ref{esti3}), we get
\begin{align}
    \sum_{i=1}^M \Big(|E_{(\mathbf{1})}|^2 + |E_{(\mathbf{2})}|^2 + |E_{(\mathbf{3})}|^2\Big) = \mathcal{O}\Big(M\;\varepsilon^\frac{2}{3} + M\;\varepsilon^\frac{4}{3}\Big) = \mathcal{O}(\varepsilon^{-\frac{1}{3}}).
\end{align}
Consequently, we arrive at the following system with $\mathbf{Z}(z_i,,t) := \mathrm{Y}_i(t)-\mathbf{Y}(z_i,t)$ :
\begin{align} \label{matrix2}
    \begin{cases}
             \mathcal{A}\frac{\mathrm{d}^2}{\mathrm{d}\mathrm{t}^2}\bm{\mathrm{Z}}(z_i,\mathrm{t}) + \bm{\mathrm{Z}}(z_i,\mathrm{t}) =  \mathcal{O}(\varepsilon^\frac{1}{3}) \mbox{ in } (0, \mathrm{T}),
             \\ \bm{\mathrm{Z}}(z_i,\mathrm{0}) = \frac{\mathrm{d}}{\mathrm{d}\mathrm{t}}\bm{\mathrm{Y}}(z_i,\mathrm{0}) = 0,   
    \end{cases}
\end{align}
where, we define the operator $\mathcal{A}: (\mathrm{L}_r^2)^M\to (\mathrm{L}_r^2)^M$ as
\begin{align}
    \mathcal{A} = \mathcal{A}(t):=
     \begin{pmatrix}
        \hbar & \dots  & \overline{\mathrm{q}}_{1\mathrm{M}}\mathcal{T}_{-\mathrm{c}_0^{-1}|\mathrm{z}_1-\mathrm{z}_M|}\\
       \vdots & \ddots & \vdots\\
       \overline{\mathrm{q}}_{\mathrm{M}1}\mathcal{T}_{-\mathrm{c}_0^{-1}|\mathrm{z}_M-\mathrm{z}_1|} & \dots  & \hbar 
    \end{pmatrix}, 
\end{align}
with $\mathrm{L}_\ell^2:= \{ f \in \mathrm{L}^2(-r,\mathrm{T}): f=0\; \text{in}\ (-r,0)\}$, the translation operators $\mathcal{T}_{-\mathrm{c}_0^{-1}|\mathrm{z}_i-\mathrm{z}_j|}$, i.e. $\mathcal{T}_{-\mathrm{c}_0^{-1}|\mathrm{z}_i-\mathrm{z}_j|}(f)(t):=f(t-\mathrm{c}_0^{-1}|\mathrm{z}_i-\mathrm{z}_j|)$, and $\ell:=\max_{i\neq j}{\mathrm{c}^{-1}_0\vert z_i- z_j\vert}$.
\newline
Hence, we use the well-possedness of the problem (\ref{matrix2}) as discussed in \cite[Section 2.4]{Arpan-Sini-JEvEq} to obtain
\begin{align}\label{mainesti}
    \sum_{i=1}^M|\widetilde{\mathrm{Y}}_i(t)-\mathbf{Y}(z_i,t)|^2 = \mathcal{O}(\varepsilon^{-\frac{1}{3}}),\; \text{as}\; \varepsilon\to 0.
\end{align}
We introduce the unknown variable $\mathbf{Y} = \frac{\partial^2}{\partial t^2}\mathbf{U}$, where $\mathbf{U}$ satisfies the following Lippmann-Schwinger equation
\begin{equation}
\hbar\; \rchi_{\mathbf{\Omega}}\; \frac{\partial ^2}{\partial t^2} \bm{\mathrm{U}} (\mathrm{x},\mathrm{t}) + \bm{\mathrm{U}} (\mathrm{x},\mathrm{t}) + \int_{\mathbf{\Omega}}\frac{b}{4\pi\vert x-y\vert} \frac{\partial ^2}{\partial t^2}{\mathbf{U} (x, t-c_0^{-1}\vert x-y\vert)} dy = \frac{\rho_\mathrm{b}}{\mathrm{k}_\mathrm{b}}\;u^\textbf{in}(\mathrm{x},\mathrm{t}),\mbox{ for } \mathrm{x} \in \mathbb{R}^3, \mathrm{t} \in (0, \mathrm{T}),
\end{equation}
with zero initial conditions for $\mathbf{U}$ up to the first order and define
Let us now define
\begin{align}
    \mathbcal{V}(x,t) := \begin{cases}
                       \mathbf{U}(x,t) + \hbar\; \frac{\partial ^2}{\partial t^2} \bm{\mathrm{U}} (\mathrm{x},\mathrm{t}) & \text{if}\; (x,t) \in \mathbf{\Omega}\times(0,T) \\ \displaystyle
                       u^\textbf{in}(\mathrm{x},\mathrm{t}) - \int_{\mathbf{\Omega}}\frac{b}{4\pi\vert x-y\vert} \frac{\partial ^2}{\partial t^2}{\mathbf{U} (x, t-c_0^{-1}\vert x-y\vert)} dy & \text{for}\; (x,t) \in \mathbb R^3\setminus\mathbf{\Omega}\times(0,T).
                 \end{cases}
\end{align}
We set $\mathbcal{W}(x,t) := u^\textbf{in}(\mathrm{x},\mathrm{t}) - \mathbcal{V}(x,t).$
\newline
From now on our aim is to estimate $|\mathbcal{W}(x,t)-\mathbf{U}(x,t)|.$ To do this let us assume that $x$ is away from $\mathbf{\Omega}\cup \{x_0\}.$ Therefore, we have
\begin{align}
    \mathbcal{W}(x,t) \nonumber &= \int_{\mathbf{\Omega}}\frac{b}{4\pi\vert x-y\vert} \frac{\partial ^2}{\partial t^2}{\mathbf{U} (y, t-c_0^{-1}\vert x-y\vert)} dy
    \\ \nonumber &= \int_{\mathbf{\Omega}}\frac{b}{4\pi\vert x-y\vert} \mathbf{Y} (y, t-c_0^{-1}\vert x-y\vert) dy
    \\ \nonumber &=\sum\limits_{\substack{i=1}}^{[\varepsilon^{-1}]} \frac{b}{4\pi|x-z_i|}\; \varepsilon\; {\mathbf{Y} (z_i, t-c_0^{-1}|x-z_i|)} - \int_{\mathbf{\Omega}\setminus{\bigcup\limits_{i=1}^{{[\varepsilon^{-1}]}}\Omega_i}}\frac{b}{4\pi\vert x-y\vert} {\mathbf{Y} (y, t-c_0^{-1}\vert x-y\vert)}\; dy
    \\ \nonumber&- \sum\limits_{\substack{i=1 }}^{[\varepsilon^{-1}]}b\int_{\Omega_i} \Big(\frac{1}{4\pi|x-y|}{\mathbf{Y} (y, t-c_0^{-1}|x-y|)} -  \frac{1}{4\pi|x-z_i|} {\mathbf{Y} (z_i, t-c_0^{-1}|x-z_i|)}\Big)\;dy.
\end{align}
Using the similar techniques as discussed to estimate $E_{(2)}$ and $E_{(3)}$, we can show that the second and third term of the above expression can be estimated as $\mathcal{O}(\varepsilon^\frac{1}{3})$\; as $\varepsilon\to 0.$ Therefore, we deduce that
\begin{align}
    \mathbcal{W}(x,t) \nonumber &= \sum\limits_{\substack{i=1 }}^{[\varepsilon^{-1}]} \frac{b}{4\pi|x-z_i|}\; \varepsilon\; {\mathbf{Y} (z_i, t-c_0^{-1}|x-z_i|)} + \mathcal{O}(\varepsilon^\frac{1}{3})
    \\ \nonumber&= \sum\limits_{\substack{i=1}}^{[\varepsilon^{-1}]} \frac{b}{4\pi|x-z_i|}\; \varepsilon\; {\widetilde{\mathrm{Y}}_i (t-c_0^{-1}|x-z_i|)}+ \sum\limits_{\substack{i=1}}^{[\varepsilon^{-1}]} \frac{b}{4\pi|x-z_i|}\; \varepsilon\; \Big({\widetilde{\mathrm{Y}}_i (t-c_0^{-1}|x-z_i|)}-\mathbf{Y} (z_i,t-c_0^{-1}|x-z_i|)\Big) 
    \\ &+ \mathcal{O}(\varepsilon^\frac{1}{3}).
\end{align}
We then use Cauchy-Schwartz's inequality and the estimate (\ref{mainesti}) to arrive at the following estimate
\begin{align}
    \textbf{err}_{(3)} \nonumber&:= \sum\limits_{\substack{i=1}}^{[\varepsilon^{-1}]} \frac{b}{4\pi|x-z_i|}\; \varepsilon\; \Big({\widetilde{\mathrm{Y}}_i (t-c_0^{-1}|x-z_i|)}-\mathbf{Y}(z_i,t-c_0^{-1}|x-z_i|)\Big) 
    \\ &\nonumber= \mathcal{O}\Bigg(\varepsilon\;b \Big(\sum\limits_{\substack{i=1}}^{[\varepsilon^{-1}]} \frac{1}{|x-z_i|^2}\Big)^\frac{1}{2}\;\Big(\sum\limits_{\substack{i=1}}^{[\varepsilon^{-1}]}|\widetilde{\mathrm{Y}}_i (t-c_0^{-1}|x-z_i|)-\mathbf{Y} (z_i,t-c_0^{-1}|x-z_i|)|^2\Big)^\frac{1}{2}\Bigg)
    \\ &\nonumber= \mathcal{O}\Big(\varepsilon\;\varepsilon^{-\frac{1}{2}}\;\varepsilon^{-\frac{1}{6}}\Big) = \mathcal{O}(\varepsilon^\frac{1}{3}).
\end{align}
Consequently, we conclude that 
\begin{align}
        u(x,t) - \mathbcal{W}(x,t) = \mathcal{O}(\varepsilon^\frac{1}{3})\; \text{as}\; \varepsilon\to 0,
    \end{align}
which completes the proof.\qed


\section{Proof of Theorem \ref{non-periodic}: for Non-Periodic Distribution, i.e. \texorpdfstring{$\mathrm{K}\not\equiv 0$}{K not equal to 0}} \label{th2} 

We divide this section into two parts. First, we will consider the case when \(\mathrm{K} \in \mathbb{N}\). Then, following a similar process, we will discuss and extend the results of the following subsection to the case when $K$ of class $C^1$ with $\nabla K(\cdot) \ne 0$ everywhere in the set of discontinuity of $\Big[K(\cdot)+1\Big].$


\subsection{Case when \texorpdfstring{$K|_{\mathbf{\Omega}} \in \mathbb{N}$}{K in N} and the bubbles are distributed according to the Assumption \ref{as2}}
\label{K-natural-number}   

\noindent
The key idea here is to rewrite the algebraic system (\ref{matrixmulti}) into a general algebraic system based on the distribution of the bubbles, as described in Assumption \ref{as2}, and define its corresponding integral equation.

\noindent
Therefore, we start by recalling that the $\big(\widetilde{\mathrm{Y}}_i\big)_{i=1}^\mathrm{M}$ is the vector solution to the following non-homogeneous second-order matrix differential equation with initial zero conditions:
\begin{align}\label{matrixmulti-new}
\begin{cases}\displaystyle
    \hbar_i\frac{\mathrm{d}^2}{\mathrm{d}\mathrm{t}^2}\widetilde{\mathrm{Y}}_i(\mathrm{t}) + \widetilde{\mathrm{Y}}_i(\mathrm{t}) + \sum\limits_{\substack{j=1 \\ j\neq i}}^M\mathrm{q}_{ij} \frac{\mathrm{d}^2}{\mathrm{d}\mathrm{t}^2}\widetilde{\mathrm{Y}}_j(\mathrm{t}-\mathrm{c}_0^{-1}|\mathrm{z}_i-\mathrm{z}_j|) = \frac{\partial^2}{\partial t^2} u^\textbf{in}(z_i) \mbox{ in } (0, \mathrm{T}),
     \\ \widetilde{\mathrm{Y}}_i(\mathrm{0}) = \frac{\mathrm{d}}{\mathrm{d}\mathrm{t}}\widetilde{\mathrm{Y}}_i(\mathrm{0}) = 0.
\end{cases}
\end{align}
Assuming that the bubbles are globally periodic but locally non-periodic, as explained in the Assumption \ref{as2}, we rewrite the system mentioned above for the subdomains $D_{m_l}$, where $m = 1,2,\ldots,[\varepsilon^{-1}]$ and $l=1,2,\ldots,K+1,\ \mathrm{K}\in \mathbb{N}$, in the following way:
\begin{align}
    \begin{cases}\displaystyle
    \hbar_{m_l}\frac{\mathrm{d}^2}{\mathrm{d}\mathrm{t}^2}\widetilde{\mathrm{Y}}_{m_l}(\mathrm{t}) + \widetilde{\mathrm{Y}}_{m_l}(\mathrm{t}) + \sum\limits_{\substack{j=1 }}^{[\varepsilon^{-1}]}\sum\limits_{\substack{i=1\\ j_i\neq m_l}}^{K+1}\mathrm{q}_{z_{m_l},z_{j_i}} \frac{\mathrm{d}^2}{\mathrm{d}\mathrm{t}^2}\widetilde{\mathrm{Y}}_{j_i}(\mathrm{t}-\mathrm{c}_0^{-1}|\mathrm{z}_{m_l}-\mathrm{z}_{j_i}|) 
    = \frac{\partial^2}{\partial t^2} u^\textbf{in}(z_{m_l}) \mbox{ in } (0, \mathrm{T}),
     \\ \widetilde{\mathrm{Y}}_{m_l}(\mathrm{0}) = \frac{\mathrm{d}}{\mathrm{d}\mathrm{t}}\widetilde{\mathrm{Y}}_{m_l}(\mathrm{0}) = 0,
\end{cases}
\end{align}
which is equivalent to
\begin{align}\label{matrixdef}
    \begin{cases}\displaystyle
    \hbar_{m_l}\frac{\mathrm{d}^2}{\mathrm{d}\mathrm{t}^2}\widetilde{\mathrm{Y}}_{m_l}(\mathrm{t}) + \widetilde{\mathrm{Y}}_{m_l}(\mathrm{t}) &+ \textcolor{black}{\sum\limits_{\substack{i=1 \\ i\neq l}}^{K+1}\mathrm{q}_{z_{m_l},z_{m_i}} \frac{\mathrm{d}^2}{\mathrm{d}\mathrm{t}^2}\widetilde{\mathrm{Y}}_{m_i}(\mathrm{t}-\mathrm{c}_0^{-1}|\mathrm{z}_{m_l}-\mathrm{z}_{m_i}|)} 
    \\ &+ \sum\limits_{\substack{j=1 \\ j\neq m}}^{[\varepsilon^{-1}]}\sum\limits_{\substack{i=1}}^{K+1}\mathrm{q}_{z_{m_l},z_{j_i}} \frac{\mathrm{d}^2}{\mathrm{d}\mathrm{t}^2}\widetilde{\mathrm{Y}}_{j_i}(\mathrm{t}-\mathrm{c}_0^{-1}|\mathrm{z}_{m_l}-\mathrm{z}_{j_i}|) = \frac{\partial^2}{\partial t^2} u^\textbf{in}(z_{m_l}) \mbox{ in } (0, \mathrm{T}),
     \\ \widetilde{\mathrm{Y}}_{m_l}(\mathrm{0}) = \frac{\mathrm{d}}{\mathrm{d}\mathrm{t}}\widetilde{\mathrm{Y}}_{m_l}(\mathrm{0}) = 0.
\end{cases}
\end{align}
We observe that
\begin{align}
    \sum\limits_{\substack{i=1 \\ i\neq l}}^{K+1}\mathrm{q}_{z_{m_l},z_{m_i}} \frac{\mathrm{d}^2}{\mathrm{d}\mathrm{t}^2}\widetilde{\mathrm{Y}}_{m_i}(\mathrm{t}-\mathrm{c}_0^{-1}|\mathrm{z}_{m_l}-\mathrm{z}_{m_i}|) \le \underbrace{\Big(\sum\limits_{\substack{i=1 \\ i\neq l}}^{K+1}|\mathrm{q}_{z_{m_l},z_{m_i}}|^2\Big)^\frac{1}{2}}_{=\; \mathcal{O}(d^2)}\;\Big( \sum\limits_{\substack{i=1 \\ i\neq l}}^{K+1}|\frac{\mathrm{d}^2}{\mathrm{d}\mathrm{t}^2}\widetilde{\mathrm{Y}}_{m_i}(\mathrm{t}-\mathrm{c}_0^{-1}|\mathrm{z}_{m_l}-\mathrm{z}_{m_i}|)|^2\Big)^\frac{1}{2}.
\end{align}
Therefore, due to the fact that $d\ll 1$ and regularity in time of the solution $\widetilde{\mathrm{Y}}_{m_l}$, we can neglect this term as it goes to zero.

\noindent
Let us now consider two representative locations, denoted as $z_m \in \Omega_m$ and $z_j \in \mathrm{D}_j$, each belonging to distinct inclusion groups. Subsequently, in order to discuss the existence and uniqueness of the solution of the equation (\ref{matrixdef}), we rewrite the system above as follows:
\begin{align}\label{tran1}
    \begin{cases}
             \mathbb{A}\frac{\mathrm{d}^2}{\mathrm{d}\mathrm{t}^2}\bm{\mathbb{Y}}_m(\mathrm{t}) + \bm{\mathbb{Y}}_m(\mathrm{t}) + \sum\limits_{\substack{j=1 \\ j\neq m}}^{[\varepsilon^{-1}]}\mathbb C_{mj} \cdot\frac{\mathrm{d}^2}{\mathrm{d}\mathrm{t}^2}\mathbb Y_j(\mathrm{t}-\mathrm{c}_0^{-1}|\mathrm{z}_{m}-\mathrm{z}_{j}|) = \mathbcal H_m^\textbf{in} + \textbf{err}_{(4)} \mbox{ in } (0, \mathrm{T}),
             \\ \bm{\mathbb{Y}}_m(\mathrm{0}) = \frac{\mathrm{d}}{\mathrm{d}\mathrm{t}}\bm{\mathbb{Y}}_m(\mathrm{0}) = 0,   
    \end{cases}
\end{align}
where $\textbf{err}_{(4)}:= \sum\limits_{\substack{j=1 \\ j\neq m}}^{[\varepsilon^{-1}]}\sum\limits_{\substack{i=1}}^{K+1}\mathrm{q}_{z_{m_l},z_{j_i}} \frac{\mathrm{d}^2}{\mathrm{d}\mathrm{t}^2}\Big(\widetilde{\mathrm{Y}}_{j_i}(\mathrm{t}-\mathrm{c}_0^{-1}|\mathrm{z}_{m_l}-\mathrm{z}_{j_i}|)-\widetilde{\mathrm{Y}}_{j_i}(\mathrm{t}-\mathrm{c}_0^{-1}|\mathrm{z}_{m}-\mathrm{z}_{j}|)\Big)$,  and
$$\mathbb C_{mj}:=\begin{pmatrix}
                         0 & q_{z_{m_1},z_{j_2}} & \ldots & q_{z_{m_1},z_{j_{K+1}}} \\
                         q_{z_{m_2},z_{j_1}} & 0 & \ldots & q_{z_{m_2},z_{j_{K+1}}} \\
                         \vdots & \vdots & \ddots & \vdots \\
                         q_{z_{m_{K+1}},z_{j_1}} & q_{z_{m_{K+1}},z_{j_2}} & \ldots & 0
                  \end{pmatrix}
$$
is describing the $(K+1)^2$-block interactions between the inclusions located in $\Omega_m$ and $\mathrm{D}_j$ for $j\ne m$ and the incident source $\mathbcal H^\textbf{in}_m := \Big(\frac{\partial^2}{\partial t^2}u^\textbf{in}(z_{m_1}), \frac{\partial^2}{\partial t^2}u^\textbf{in}(z_{m_2}), \ldots, \frac{\partial^2}{\partial t^2}u^\textbf{in}(z_{m_l})\Big)^t$. We define the operator $\mathbb{A}: (\mathrm{L}_r^2)^{K+1}\to (\mathrm{L}_r^2)^{K+1}$ as
\begin{align}
    \mathbb{A} = \big[\mathbcal a_{ij}\big]_{i,j=1}^{K+1}:=
     \begin{pmatrix}
        \hbar_{m_1} & 0 & \dots  & 0\\
        0 & \hbar_{m_1} & \dots  & 0\\
       \vdots & \vdots & \ddots & \vdots\\
       0 & 0 & \dots  & \hbar_{m_{K+1}} 
    \end{pmatrix}, 
\end{align}
with $\mathrm{L}_\ell^2:= \{ f \in \mathrm{L}^2(-r,\mathrm{T}): f=0\; \text{in}\ (-\ell,0)\}$ and $\bm{\mathbb{Y}}_m=\Big(\widetilde{Y}_{m_1},\widetilde{Y}_{m_2},\ldots,\widetilde{Y}_{m_{K+1}}\Big)^t.$
\newline

\noindent
Using Taylor's series expansion, the counting lemma (\ref{counting})  and the regularity of the solution of problem (\ref{matrixmulti-new}), we deduce the following estimates
\begin{align}
    \textbf{err}_{(4)} \nonumber&:= \sum\limits_{\substack{j=1 \\ j\neq m}}^{[\varepsilon^{-1}]}\sum\limits_{\substack{i=1}}^{K+1}\mathrm{q}_{z_{m_l},z_{j_i}} \frac{\mathrm{d}^2}{\mathrm{d}\mathrm{t}^2}\Big(\widetilde{\mathrm{Y}}_{j_i}(\mathrm{t}-\mathrm{c}_0^{-1}|\mathrm{z}_{m_l}-\mathrm{z}_{j_i}|)-\widetilde{\mathrm{Y}}_{j_i}(\mathrm{t}-\mathrm{c}_0^{-1}|\mathrm{z}_{m}-\mathrm{z}_{j}|)\Big)
    \\ &=\nonumber \sum\limits_{\substack{j=1 \\ j\neq m}}^{[\varepsilon^{-1}]}\sum\limits_{\substack{i=1}}^{K+1}\mathrm{q}_{z_{m_l},z_{j_i}} \frac{\mathrm{d}^2}{\mathrm{d}\mathrm{t}^2}\Big(\widetilde{\mathrm{Y}}_{j_i}(\mathrm{t}-\mathrm{c}_0^{-1}|\mathrm{z}_{m_l}-\mathrm{z}_{j_i}|)-\widetilde{\mathrm{Y}}_{j_i}(\mathrm{t}-\mathrm{c}_0^{-1}|\mathrm{z}_{m_l}-\mathrm{z}_{j}|)\Big)
    \\ &+\nonumber\sum\limits_{\substack{j=1 \\ j\neq m}}^{[\varepsilon^{-1}]}\sum\limits_{\substack{i=1}}^{K+1}\mathrm{q}_{z_{m_l},z_{j_i}} \frac{\mathrm{d}^2}{\mathrm{d}\mathrm{t}^2}\Big(\widetilde{\mathrm{Y}}_{j_i}(\mathrm{t}-\mathrm{c}_0^{-1}|\mathrm{z}_{m_l}-\mathrm{z}_{j}|)-\widetilde{\mathrm{Y}}_{j_i}(\mathrm{t}-\mathrm{c}_0^{-1}|\mathrm{z}_{m}-\mathrm{z}_{j}|)\Big)
    \\ &\nonumber= \mathcal{O}\Big(\delta\ d \ \sum\limits_{\substack{j=1 \\ j\neq m}}^{[\varepsilon^{-1}]} \frac{1}{d_{mj}}\Big) = \mathcal{O}(\delta^\frac{1}{3}).
\end{align}
Therefore, we can rewrite the system (\ref{tran1}) as follows
\begin{align}\label{tran5}
    \begin{cases}
             \mathbb{B}\frac{\mathrm{d}^2}{\mathrm{d}\mathrm{t}^2}\bm{\mathbb{Y}}(\mathrm{t}) + \bm{\mathbb{Y}}(\mathrm{t}) = \mathbcal H^\textbf{in} + \textbf{err}_{(4)} \mbox{ in } (0, \mathrm{T}),
             \\ \bm{\mathbb{Y}}(\mathrm{0}) = \frac{\mathrm{d}}{\mathrm{d}\mathrm{t}}\bm{\mathbb{Y}}(\mathrm{0}) = 0,   
    \end{cases}
\end{align}
where, we define the operator $\mathbb{B}: \Big((\mathrm{L}_r^2)^{K+1}\Big)^M\to \Big((\mathrm{L}_r^2)^{K+1}\Big)^M$ as
\begin{align}
    \mathbb{B}= \big[\mathbcal B_{ij}\big]_{i,j=1}^M = \mathbb{B}(t):=
     \begin{pmatrix}
        \mathbb A_{11} & \dots  & \mathbb C_{1M}\cdot\mathbb{T}_{1M}\\
       \vdots & \ddots & \vdots\\
       \mathbb C_{M1}\cdot \mathbb{T}_{M1} & \dots  & \mathbb A_{MM}
    \end{pmatrix}, 
\end{align}
with $\mathrm{L}_r^2:= \{ f \in \mathrm{L}^2(-r,\mathrm{T}): f=0\; \text{in}\ (-r,0)\}$, the translation operators $\mathbb{T}_{1M}:= \big(\mathcal{T}_{11},\mathcal{T}_{12},\ldots,\mathcal{T}_{1M}\big)^t$, i.e. $\mathcal T_{1M}(f)(t):=f(t-\mathrm{c}_0^{-1}|\mathrm{z}_1-\mathrm{z}_M|)$, and $\ell:=\max_{i\neq j}{\mathrm{c}^{-1}_0\vert z_1- z_M\vert}$ and $\bm{\mathbb{Y}}=\Big(\mathbb Y_1,\mathbb Y_2,\ldots,\mathbb Y_M\Big)^t.$
\newline

\noindent
We observe that $\mathcal{B}_{ii}$ is non-singular as $ d_{m_l} \ne 0\; \text{for}\ m= 1,2,...,M.
$
Additionally, we require the supplementary assumption that
$
\sum\limits_{\substack{j=1 \\ j\neq i}}^{\left[\varepsilon^{-1}\right]} \left\| \mathbcal{B}_{ii}^{-1} \mathbcal{B}_{ij} \right\|_{L^2_r} < 1 \quad \text{for} \quad i=1,2,\ldots,M,
$
which is equivalent to the condition
$
\sum\limits_{\substack{j=1 \\ j\neq i}}^{\left[\varepsilon^{-1}\right]} \left\| \mathbcal{B}_{ij} \right\|_{L^2_r} < \left( \left\| \mathbcal{B}_{ii}^{-1} \right\|_{L^2_r} \right)^{-1}.
$
Now, considering $\mathbb{B} = [\mathcal{B}_{ij}]$ with blocks $\mathcal{B}_{ij} \in \mathbb{R}^{K+1 \times K+1}$ for $i,j = 1,2,\ldots,M$, from the previously established result regarding the equivalence of norms, we can infer that
\begin{align} \label{4.9}
    \Vert \mathcal{B}_{ij} \Vert_{L^2_r} \le \sqrt{K+1} \Vert \mathcal{B}_{ij} \Vert_{L^\infty} 
    = \sqrt{K+1} \sum_{\substack{i,l=1 \\ i\neq l}}^{K+1} \mathrm{q}_{\mathrm{z}_{m_l},\mathrm{z}_{j_i}},
\end{align}
where $K$ is finite and $\mathrm{q}_{\mathrm{z}_{m_l},\mathrm{z}_{j_i}}$ is positive. More explicitly, the following condition holds under the assumption\\ \textcolor{black}{$\sqrt{K+1} \sum\limits_{\substack{j=1 \\ j\neq m}}^M \sum\limits_{\substack{i,l=1 \\ i\neq l}}^{K+1} \mathrm{q}_{\mathrm{z}_{m_l},\mathrm{z}_{j_i}} <  \min\limits_{1\le l\le K+1}d_{m_l} $}. If both of these conditions hold, we refer to $\mathbb{B}$ as row block diagonally dominant with respect to the operator norm $\| \cdot \|_{L^2_r}$. It is a well-known result in numerical linear algebra that if $\mathbb{B}$ is row block strictly diagonally dominant, then it is non-singular. Consequently, after neglecting the error order terms, the system of differential equations (\ref{tran5}) reduces to the following form, and similarly, the well-posedness of the system can be established, as discussed in \cite[Section 2.4]{Arpan-Sini-JEvEq}:
\begin{align}\label{wellp}
    \begin{cases}
             \frac{\mathrm{d}^2}{\mathrm{d}t^2}\bm{\mathbb{Y}}(t) + \mathbb{B}^{-1}\bm{\mathbb{Y}}(t) =\mathbb{B}^{-1}\cdot \frac{\mathrm{d}^2}{\mathrm{d}t^2} \mathbcal H^{\textbf{in}} \; \text{in} \; (0, T),
             \\ \bm{\mathbb{Y}}(0) = \frac{\mathrm{d}}{\mathrm{d}t}\bm{\mathbb{Y}}(0) = 0.   
    \end{cases}
\end{align}
The aforementioned discussion on the well-posedness of the system is needed in the following two subsections, namely \ref{4.1.2} and \ref{K-function}, to derive an estimate similar to that in (\ref{mainesti}).


\subsubsection{Reformulation of the Linear System and the Related Lippmann-Schwinger Equation}\label{Reformulation-Linear-System}   

\noindent
We start by recalling the algebraic system
\begin{align}\label{tran2}
    \begin{cases}
             \mathbb{A}\cdot\frac{\mathrm{d}^2}{\mathrm{d}\mathrm{t}^2}\bm{\mathbb{Y}}_m(\mathrm{t}) + \bm{\mathbb{Y}}_m(\mathrm{t}) + \sum\limits_{\substack{j=1 \\ j\neq m}}^{[\varepsilon^{-1}]}\mathbb C_{mj} \cdot\frac{\mathrm{d}^2}{\mathrm{d}\mathrm{t}^2}\mathbb Y_j(\mathrm{t}-\mathrm{c}_0^{-1}|\mathrm{z}_{m}-\mathrm{z}_{j}|) = \frac{\mathrm{d}^2}{\mathrm{d}\mathrm{t}^2} \mathbcal H_m^\textbf{in} + \textbf{err}_{(4)} \mbox{ in } (0, \mathrm{T}),
             \\ \bm{\mathbb{Y}}_m(\mathrm{0}) = \frac{\mathrm{d}}{\mathrm{d}\mathrm{t}}\bm{\mathbb{Y}}_m(\mathrm{0}) = 0. 
    \end{cases}
\end{align}
We intend to link the system mentioned above to an integral equation, similar to what we did for the case of the periodic distribution. We consider the following related integral equation by neglecting the error term as follows:
\begin{equation}\label{effective-equation-np-1}
\rchi_{\mathbf{\Omega}}\;\mathbb{A}\ \cdot\ \frac{\partial ^2}{\partial t^2} \mathbcal{F}_m (\mathrm{x},\mathrm{t}) + \mathbcal{F}_m (\mathrm{x},\mathrm{t}) + \int_{\mathbf{\Omega}}\mathbcal C(x,y) \ \cdot \ \frac{\partial ^2}{\partial t^2}{ \mathbcal{F}_m(x, t-c_0^{-1}\vert x-y\vert)} dy = \frac{\partial ^2}{\partial t^2}\mathbcal{F}_m^\textbf{in}(\mathrm{x},\mathrm{t}),\mbox{ for } \mathrm{x} \in \mathbb{R}^3, \mathrm{t} \in (0, \mathrm{T}),
\end{equation}
where we have $\mathbcal{F}_m^\textbf{in}:=\big(u^\textbf{in},u^\textbf{in},\ldots,u^\textbf{in}\big)^t$. We will also include the initial conditions for $\mathbcal{F}:= \big(\mathcal{F}_{m_1},\mathcal{F}_{m_2},\ldots,\mathcal{F}_{m_{K+1}}\big)^t$ up to the first order in this equation. Here, $\mathbcal C$ is the representation of the corresponding interaction matrix
    \begin{align}
    \mathbcal{C}(x,y) = \begin{pmatrix}
                         0 & \mathbcal q(x,y) & \ldots &\mathbcal q(x,y) \\
                         \mathbcal q(x,y) & 0 & \ldots & \mathbcal q(x,y) \\
                         \vdots & \vdots & \ddots & \vdots \\
                        \mathbcal q(x,y) & \mathbcal q(x,y) & \ldots & 0
                  \end{pmatrix},\; \text{with}\; \mathbcal q(x,y) = \frac{b}{|x-y|}\;\text{and}\; x\ne y.
\end{align}
and 
\begin{align}
    \mathbb{A} = \big[\mathbcal a_{ij}\big]_{i,j=1}^{K+1}:=
     \begin{pmatrix}
        \hbar & 0 & \dots  & 0\\
        0 & \hbar & \dots  & 0\\
       \vdots & \vdots & \ddots & \vdots\\
       0 & 0 & \dots  & \hbar
    \end{pmatrix}.
\end{align}
If we now observe the interaction matrix $\mathbcal C$ and the definition of $\mathbcal F_m,$ we rewrite the above system as follows
\begin{equation}\label{sum}
\rchi_{\mathbf{\Omega}}\;\mathbb{A}\ \cdot\ \frac{\partial ^2}{\partial t^2} \mathbcal F_m (x,t) + \mathbcal{F}_m (\mathrm{x},\mathrm{t}) + \int_{\mathbf{\Omega}}
\begin{pmatrix}
    \mathbcal q(x,y) \cdot \sum\limits_{l=1}^{K+1}\mathcal{F}_{m_l} (\mathrm{y},\mathrm{t}-c_0^{-1}\vert x-y\vert) \\
    \vdots \\
    \mathbcal q(x,y) \cdot \sum\limits_{l=1}^{K+1}\mathcal{F}_{m_l} (\mathrm{y},\mathrm{t}-c_0^{-1}\vert x-y\vert)
\end{pmatrix} dy = \frac{\partial ^2}{\partial t^2}\mathbcal{F}_m^\textbf{in}(\mathrm{x},\mathrm{t}),\mbox{ for } \mathrm{x} \in \mathbb{R}^3, \mathrm{t} \in (0, \mathrm{T}).
\end{equation}
It is evident from the preceding equation that the vector \( \mathbcal{F}(\cdot,\cdot) \) can be reconstructed as the sum of its components, i.e., \( \sum\limits_{\ell=1}^{K+1} \mathcal{F}_{m_\ell}(\cdot,\cdot) \). Indeed, summing in (\ref{sum}), we obtain the following integral equation for $ \mathrm{x} \in \mathbb{R}^3, \mathrm{t} \in (0, \mathrm{T}),$ for $\sum\limits_{\ell=1}^{K+1} \mathcal{F}_{m_\ell}(\cdot,\cdot)$
\begin{equation}
\rchi_{\mathbf{\Omega}}\;\hbar\  \frac{\partial ^2}{\partial t^2} \sum\limits_{l=1}^{K+1}\mathcal F_{m_l} (x,t) + \sum\limits_{l=1}^{K+1}\mathcal F_{m_l} (x,t) + \int_{\mathbf{\Omega}}(K+1)\
    \mathbcal q(x,y) \ \frac{\partial ^2}{\partial t^2}\sum\limits_{l=1}^{K+1}\mathcal{F}_{m_l} (\mathrm{y},\mathrm{t}-c_0^{-1}\vert x-y\vert) dy = (\mathrm{K}+1) \frac{\partial ^2}{\partial t^2}u^\textbf{in}(\mathrm{x},\mathrm{t}).
\end{equation}
Then, we arrive at the following expression after rewriting the unknown as $\sum\limits_{l=1}^{K+1}\mathcal R_{m_l} (x,t) := \frac{1}{\mathrm{K}+1}\sum\limits_{l=1}^{K+1}\mathcal F_{m_l} (x,t)$,
\begin{equation}\label{effective-equation-np}
\rchi_{\mathbf{\Omega}}\;\hbar\  \frac{\partial ^2}{\partial t^2} \sum\limits_{l=1}^{K+1}\mathcal R_{m_l} (x,t) + \sum\limits_{l=1}^{K+1}\mathcal R_{m_l} (x,t) + \int_{\mathbf{\Omega}}(K+1)\
    \mathbcal q(x,y) \ \frac{\partial ^2}{\partial t^2}\sum\limits_{l=1}^{K+1}\mathcal{R}_{m_l} (\mathrm{y},\mathrm{t}-c_0^{-1}\vert x-y\vert) dy = \frac{\partial ^2}{\partial t^2}u^\textbf{in}(\mathrm{x},\mathrm{t}).
\end{equation}


\subsubsection{The generated effective medium} \label{4.1.2}        

\noindent
Similar to the periodic case, we first define the unknown variable $\sum\limits_{l=1}^{K+1}\mathcal R_{m_l} = \frac{\partial ^2}{\partial t^2} \sum\limits_{l=1}^{K+1}\mathbcal G_{m_l},$ where $\mathbcal G_m:=\sum\limits_{l=1}^{K+1}\mathbcal G_{m_l}$ satisfies the following Lippmann-Schwinger equation
\begin{equation}
\rchi_{\mathbf{\Omega}}\;\hbar\ \frac{\partial ^2}{\partial t^2} \mathbcal G_m (x,t) + \mathbcal G_m (x,t) + \int_{\mathbf{\Omega}}
    (K+1)\ \mathbcal q(x,y)  \frac{\partial ^2}{\partial t^2}\mathbcal G_m (\mathrm{y},\mathrm{t}-c_0^{-1}\vert x-y\vert) dy = u^\textbf{in}(\mathrm{x},\mathrm{t}),\mbox{ for } \mathrm{x} \in \mathbb{R}^3, \mathrm{t} \in (0, \mathrm{T}),
\end{equation}
with zero initial conditions for $\mathbcal G_m$ up to the first order in this equation. Then, we consider
\begin{align}
    \mathbcal V(x,t) := \begin{cases}
                       \mathbcal G_m (x,t) + \hbar\ \frac{\partial ^2}{\partial t^2} \mathbcal G_m (x,t) & \text{if}\; (x,t) \in \mathbf{\Omega}\times(0,T) \\ \displaystyle
                       u^\textbf{in}(\mathrm{x},\mathrm{t}) - \int_{\mathbf{\Omega}}(K+1)\frac{b}{4\pi\vert x-y\vert} \frac{\partial ^2}{\partial t^2}{\mathbcal G _m(y, t-c_0^{-1}\vert x-y\vert)} dy & \text{for}\; (x,t) \in \mathbb R^3\setminus\mathbf{\Omega}\times(0,T).
                 \end{cases}
\end{align}
We set $\mathbcal W(x,t) := u^\textbf{in}(\mathrm{x},\mathrm{t}) - \mathbcal V(x,t).$ Therefore, we deduce that
\begin{align}\label{integralW}
    \mathbcal W(x,t) &\nonumber= \int_{\mathbf{\Omega}}(K+1)\frac{b}{4\pi\vert x-y\vert} \frac{\partial ^2}{\partial t^2}{ \mathbcal G_m (y, t-c_0^{-1}\vert x-y\vert)} dy
    \\ &\nonumber= \int_{\mathbf{\Omega}}(K+1)\frac{b}{4\pi\vert x-y\vert} \frac{\partial ^2}{\partial t^2}{\sum\limits_{l=1}^{K+1}\mathcal G_{m_l} (y, t-c_0^{-1}\vert x-y\vert)} dy
    \\ & = \int_{\mathbf{\Omega}}(K+1)\frac{b}{4\pi\vert x-y\vert} {\sum\limits_{l=1}^{K+1}\mathcal R_{m_l} (y, t-c_0^{-1}\vert x-y\vert)} dy.
\end{align}
Let us begin by recalling the related scattered field approximation (\ref{assymptotic-expansion-us}) for \( x \in \mathbb{R}^3 \setminus \mathbf{\Omega} \). We will rewrite the approximation under the assumption that the bubbles inside each \(\Omega_m\) have identical shapes and material properties, with \(\mathrm{K} \in \mathbb{N}\), as follows after rewriting the unknown as $\sum\limits_{l=1}^{K+1}\widetilde{\mathcal P}_{m_l} (x,t) := \frac{1}{\mathrm{K}+1}\sum\limits_{l=1}^{K+1}\widetilde{\mathrm{Y}}_{m_l} (x,t)$
\begin{align}\label{est1}
    u^\mathrm{s}(\mathrm{x},\mathrm{t}) 
    \nonumber& = \sum_{m=1}^\mathrm{M}\frac{\alpha_m \rho_\mathrm{c}}{4\pi|\mathrm{x}-\mathrm{z}_i|}\;|\mathrm{D}_m|\;\widetilde{\mathrm{Y}}_m\big(\mathrm{t}-\mathrm{c}_0^{-1}|\mathrm{x}-\mathrm{z}_m|\big) + \mathcal{O}(\delta^{2-l})\; \text{as}\; \delta\to 0
    \\ \nonumber&= \sum\limits_{\substack{m=1 }}^{[\varepsilon^{-1}]}\sum\limits_{\substack{l=1}}^{K+1}\frac{\alpha_{m_l} \rho_\mathrm{c}}{4\pi|\mathrm{x}-\mathrm{z}_{m_1}|}\;|\mathrm{D}_{m_l}|\;\frac{\rho_{b_l}}{\mathrm{k}_{b_l}}\widetilde{\mathrm{Y}}_{m_l}(\mathrm{t}-\mathrm{c}_0^{-1}|x-\mathrm{z}_{m_l}|) 
    \\ \nonumber&+ \sum\limits_{\substack{m=1 }}^{[\varepsilon^{-1}]}\sum\limits_{\substack{l=1}}^{K+1}\frac{\alpha_{m_l} \rho_\mathrm{c}}{4\pi}\;|\mathrm{D}_{m_l}|\;\frac{\rho_{b_l}}{\mathrm{k}_{b_l}}\Big(\frac{1}{|\mathrm{x}-\mathrm{z}_{m_l}|}-\frac{1}{|\mathrm{x}-\mathrm{z}_{m_1}|}\Big)\; \widetilde{\mathrm{Y}}_{m_l}(\mathrm{t}-\mathrm{c}_0^{-1}|x-\mathrm{z}_{m_l}|) + \mathcal{O}(\delta^{2-l})
    \\ \nonumber&= \sum\limits_{\substack{m=1 }}^{[\varepsilon^{-1}]}\frac{b(\mathrm{K}+1)}{4\pi|\mathrm{x}-\mathrm{z}_{m_1}|}\sum\limits_{\substack{l=1}}^{K+1}\widetilde{\mathcal{P}}_{m_l}(\mathrm{t}-\mathrm{c}_0^{-1}|x-\mathrm{z}_{m_l}|) \quad \quad \Big[\text{as},\; \alpha_{m_l}\;|\mathrm{D}_{m_l}|\;\frac{\rho_{b_l}}{\mathrm{k}_{b_l}} = b + \mathcal{O}(\delta^2)  \Big]
    \\ &+ \underbrace{\sum\limits_{\substack{m=1 }}^{[\varepsilon^{-1}]}\sum\limits_{\substack{l=1}}^{K+1}\frac{b}{4\pi}\Big(\frac{1}{|\mathrm{x}-\mathrm{z}_{m_l}|}-\frac{1}{|\mathrm{x}-\mathrm{z}_{m_1}|}\Big)\; \widetilde{\mathrm{Y}}_{m_l}(\mathrm{t}-\mathrm{c}_0^{-1}|x-\mathrm{z}_{m_l}|)}_{:=\; \textbf{err}^{(1)}} + \mathcal{O}(\delta^{2-l}).
\end{align}
Let us estimate $\textbf{err}^{(1)}$.
\begin{align}\label{est2}
    \textbf{err}^{(1)}&\nonumber:= \sum\limits_{\substack{m=1 }}^{[\varepsilon^{-1}]}\sum\limits_{\substack{l=1}}^{K+1}\frac{b}{4\pi}\Big(\frac{1}{|\mathrm{x}-\mathrm{z}_{m_l}|}-\frac{1}{|\mathrm{x}-\mathrm{z}_{m_1}|}\Big)\; \widetilde{\mathrm{Y}}_{m_l}(\mathrm{t}-\mathrm{c}_0^{-1}|x-\mathrm{z}_{m_l}|)
    \\ &\nonumber \lesssim \varepsilon \Big(\sum\limits_{\substack{m=1 }}^{[\varepsilon^{-1}]}\sum\limits_{\substack{l=1}}^{K+1} |z_{m_l}-z_{m_1}|^2\Big)^\frac{1}{2}\;  \Big(\sum\limits_{\substack{m=1 }}^{[\varepsilon^{-1}]}\sum\limits_{\substack{l=1}}^{K+1} |\widetilde{\mathrm{Y}}_{m_l}|^2\Big)^\frac{1}{2}
    \\ & \lesssim \varepsilon^\frac{5}{6}\Big( \sum\limits_{\substack{m=1 }}^{[\varepsilon^{-1}]} |\mathbcal F_m|^2 + \sum\limits_{\substack{m=1 }}^{[\varepsilon^{-1}]} |\mathbb Y_m - \mathbcal F_m|^2\Big)^\frac{1}{2} \lesssim \varepsilon^\frac{1}{3}.
\end{align}
The reason we obtain the estimate above is to rewrite the unknown \(\sum\limits_{l=1}^{K+1}\widetilde{\mathrm{Y}}_{m_l}\) as the unknown \(\mathbb{Y}_m\). We then use the regularity assumption for \(\mathbcal{F}_m\) (we can prove the regularity similar to what was discussed in Corollary \ref{cor}) and the estimate \(\sum\limits_{m=1}^{[\varepsilon^{-1}]} |\mathbb{Y}_m - \mathbcal{F}_m|^2 \lesssim \varepsilon^{-\frac{1}{3}}\). To achieve this estimate, we start by discretizing the integral equation (\ref{effective-equation-np-1}). Next, we estimate the difference between this integral equation and the general algebraic system (\ref{tran2}). To do this, we rely on the regularity assumptions of \(\mathbcal{F}_m\), which can be derived similarly to the discussion in Section \ref{regularity}. This leads us to a system similar to that derived in (\ref{matrix2}). Subsequently, we utilize the well-posedness of the system (\ref{wellp}) to derive the aforementioned estimate.
\noindent
Consequently, we obtain that
\begin{align}\label{res}
    u^\mathrm{s}(\mathrm{x},\mathrm{t}) 
    \nonumber& = \sum\limits_{\substack{m=1 }}^{[\varepsilon^{-1}]}\frac{b(\mathrm{K}+1)}{4\pi|\mathrm{x}-\mathrm{z}_{m_1}|} \sum\limits_{\substack{l=1}}^{K+1}\widetilde{\mathcal{P}}_{m_l}(\mathrm{t}-\mathrm{c}_0^{-1}|x-\mathrm{z}_{m_l}|) + \mathcal{O}(\delta^\frac{1}{3})
    \\ \nonumber& = \sum\limits_{\substack{m=1 }}^{[\varepsilon^{-1}]}\frac{b(\mathrm{K}+1)}{4\pi|\mathrm{x}-\mathrm{z}_{m_1}|} \sum\limits_{\substack{l=1}}^{K+1}\mathcal{R}_{m_l}(z_{m_l},\mathrm{t}-\mathrm{c}_0^{-1}|x-\mathrm{z}_{m_l}|)
    \\ &+ \underbrace{\sum\limits_{\substack{m=1 }}^{[\varepsilon^{-1}]}\frac{b(\mathrm{K}+1)}{4\pi|\mathrm{x}-\mathrm{z}_{m_1}|} \sum\limits_{\substack{l=1}}^{K+1}\widetilde{\mathcal{P}}_{m_l}(t-c_0^{-1}|x-z_{m_l})-\mathcal R_{m_l} (z_{m_l},t-c_0^{-1}|x-z_{m_l}|)}_{:=\; \textbf{err}^{(2)}}+ \mathcal{O}(\delta^\frac{1}{3}).
\end{align}
To estimate the term $\textbf{err}^{(2)}$, we do the following
\begin{align}\label{e2}
    \textbf{err}^{(2)} \nonumber&:= \sum\limits_{\substack{m=1 }}^{[\varepsilon^{-1}]}\frac{b(\mathrm{K}+1)}{4\pi|\mathrm{x}-\mathrm{z}_{m_1}|}\frac{1}{\mathrm{K}+1}\sum\limits_{\substack{l=1}}^{K+1}\Big(\widetilde{Y}_{m_l}-\mathcal{F}_{m_l}\Big)(\mathrm{t}-\mathrm{c}_0^{-1}|x-\mathrm{z}_{m_l}|)
    \\ &\nonumber \lesssim \mathcal{O}\Bigg(\varepsilon\;b \Big(\sum\limits_{\substack{m=1}}^{[\varepsilon^{-1}]}\sum\limits_{\substack{l=1}}^{K+1} \frac{1}{|x-z_{m_1}|^2}\Big)^\frac{1}{2}\;\Big(\sum\limits_{\substack{m=1}}^{[\varepsilon^{-1}]}\sum\limits_{\substack{l=1}}^{K+1}|\widetilde{\mathrm{Y}}_{m_l}(t-c_0^{-1}|x-z_{m_l})-\mathcal F_{m_l} (z_{m_l},t-c_0^{-1}|x-z_{m_l}|)|^2\Big)^\frac{1}{2}\Bigg)
    \\ & \lesssim \mathcal{O}\Bigg(\varepsilon^\frac{1}{2}\;\Big(\sum\limits_{\substack{m=1}}^{[\varepsilon^{-1}]}|\mathbb{Y}_{m}-\mathbcal F_{m}|^2\Big)^\frac{1}{2}\Bigg)
    \lesssim \varepsilon^\frac{1}{3}.
\end{align}
Therefore, we obtain from (\ref{res}) and (\ref{e2}) that
\begin{align}\label{us}
    u^\mathrm{s}(\mathrm{x},\mathrm{t}) = \sum\limits_{\substack{m=1 }}^{[\varepsilon^{-1}]}\frac{b(\mathrm{K}+1)}{4\pi|\mathrm{x}-\mathrm{z}_{m_1}|} \sum\limits_{\substack{l=1}}^{K+1}\mathcal{R}_{m_l}(z_{m_l},\mathrm{t}-\mathrm{c}_0^{-1}|x-\mathrm{z}_{m_l}|) + \mathcal{O}(\delta^\frac{1}{3}).
\end{align}
Consequently, we have from (\ref{integralW}) and (\ref{us}) that
\begin{align}
    \nonumber &\mathbcal W(x,t) -u^\mathrm{s}(\mathrm{x},\mathrm{t}) =\underbrace{\int_{\mathbf{\Omega}\setminus{\bigcup\limits_{m=1}^{{[\varepsilon^{-1}]}}\Omega_m}}\frac{(K+1)b}{4\pi\vert x-y\vert} \sum\limits_{l=1}^{K+1}\mathcal R_{m_l}(y, t-c_0^{-1}|x-y|)\; dy}_{:=\; \textbf{err}^{1}}
    \\ \nonumber&+ \underbrace{\sum\limits_{\substack{m=1 }}^{[\varepsilon^{-1}]}\int_{\Omega_m}b\frac{(K+1)}{4\pi|x-y|}\sum\limits_{l=1}^{K+1}\mathcal R_{m_l}(y, t-c_0^{-1}|x-y|)\ dy -  \;
    b(\mathrm{K}+1)\frac{\varepsilon}{4\pi|x-z_{m_1}|} {\sum\limits_{l=1}^{K+1}\mathcal R_{m_l}(z_{m_l}, t-c_0^{-1}|x-z_{m_l}|)}}_{:=\;\textbf{err}^{(2)}}.
\end{align}
Next, we utilizing the fact that $\text{vol}(\Omega_m) = \varepsilon,$ we rewrite the following term as
\begin{align}
    \nonumber &\mathbcal W(x,t) -u^\mathrm{s}(\mathrm{x},\mathrm{t}) =\underbrace{\int_{\mathbf{\Omega}\setminus{\bigcup\limits_{m=1}^{{[\varepsilon^{-1}]}}\Omega_m}}\frac{(K+1)b}{4\pi\vert x-y\vert} \sum\limits_{l=1}^{K+1}\mathcal R_{m_l}(y, t-c_0^{-1}|x-y|)\; dy}_{:=\; \textbf{err}^{(1)}}
    \\ &+ \underbrace{\sum\limits_{\substack{m=1 }}^{[\varepsilon^{-1}]}b(K+1)\int_{\Omega_m}\Bigg( \frac{1}{4\pi|x-y|}\sum\limits_{l=1}^{K+1}\mathcal R_{m_l}(y, t-c_0^{-1}|x-y|)\ dy -   \frac{1}{4\pi|x-z_{m_1}|} {\sum\limits_{l=1}^{K+1}\mathcal R_{m_l}(z_{m_l}, t-c_0^{-1}|x-z_{m_l}|)}\Bigg)dy}_{:=\;\textbf{err}^{(2)}}
\end{align}
To estimate the values of $\textbf{err}^{(1)}$ and $\textbf{err}^{(2)}$, we follow a similar process as described in Section \ref{end}. Here, we require the smoothness of $\sum\limits_{l=1}^{K+1}\mathcal R_{m_l}$. We need it to belong to the space $\mathrm{C}^1\big(0,\mathrm{T};\mathrm{L}^\infty(\mathbf{\Omega})\big)$ and its partial derivative $\partial_{x_i}\sum\limits_{l=1}^{K+1}\mathcal R_{m_l}$ to be in $\mathrm{L}^\infty\big(0,\mathrm{T};\mathrm{L}^{\infty}(\mathbf{\Omega})\big)$. Looking at the integral equation (\ref{effective-equation-np}) for $\sum\limits_{l=1}^{K+1}\mathcal R_{m_l}$, we notice it is similar to that of (\ref{effective-equation}). Therefore, we apply the same argument as discussed in Section \ref{regularity} to establish the necessary smoothness. Therefore, we obtain the estimate
\begin{align}
    \nonumber\mathbcal W(x,t) -u^\mathrm{s}(\mathrm{x},\mathrm{t}) = \mathcal{O}(\varepsilon^\frac{1}{3}).
\end{align}


\subsection{Case when \texorpdfstring{$K \in C^1$}{K} with \texorpdfstring{$\nabla K(x) \ne 0$ everywhere in the set of discontinuity of $\Big[K(\cdot)+1\Big]$}{K}}
\label{K-function}         

\noindent
To describe correctly this number of bubbles, let us be given $K : \mathbb R^3 \to \mathbb R$, is a positive real valued function which has continuous first derivative and $\nabla \mathrm{K}(x) \neq 0$ everywhere in the set of discontinuity of the function $\Big[K(\cdot)+1\Big].$ Then, according to Assumption \ref{as2}, each $\Omega_m$ contains exactly $\big[K(z_{m_l})+1\big]$ number of bubbles for $m=1,2,\ldots,[\varepsilon^{-1}]$ i.e. $\mathrm{D}:= \bigcup\limits_{m=1}^{{[\varepsilon^{-1}]}}\bigcup\limits_{l=1}^{\big[K(z_{m_l})+1\big]}\mathrm{D}_{m_l}.$ Additionally, we assume that the bubbles included in each $\Omega_m$ possesses identical shapes and material properties.

\noindent
We start with recalling the algebric system and the corresponding general Lippmann-Schwinger equation similar to (\ref{tran2}) and (\ref{effective-equation-np-1}) as follows:
\begin{align}\label{tran3}
    \begin{cases}
             \mathbb{A}\cdot\frac{\mathrm{d}^2}{\mathrm{d}\mathrm{t}^2}\bm{\mathbb{Y}}_m(\mathrm{t}) + \bm{\mathbb{Y}}_m(\mathrm{t}) + \sum\limits_{\substack{j=1 \\ j\neq m}}^{[\varepsilon^{-1}]}\mathbb C_{mj} \cdot\frac{\mathrm{d}^2}{\mathrm{d}\mathrm{t}^2}\mathbb Y_j(\mathrm{t}-\mathrm{c}_0^{-1}|\mathrm{z}_{m}-\mathrm{z}_{j}|) = \frac{\mathrm{d}^2}{\mathrm{d}\mathrm{t}^2} \mathbcal H_m^\textbf{in} + \textbf{err}_{(4)} \mbox{ in } (0, \mathrm{T}),
             \\ \bm{\mathbb{Y}}_m(\mathrm{0}) = \frac{\mathrm{d}}{\mathrm{d}\mathrm{t}}\bm{\mathbb{Y}}_m(\mathrm{0}) = 0, 
    \end{cases}
\end{align}
and
\begin{equation}\label{effective-equation-np-3}
\rchi_{\mathbf{\Omega}}\;\overline{\mathbb{A}}\ \cdot\ \frac{\partial ^2}{\partial t^2} \mathbcal{F}_m (\mathrm{x},\mathrm{t}) + \mathbcal{F}_m (\mathrm{x},\mathrm{t}) + \int_{\mathbf{\Omega}}\overline{\mathbcal C}(x,y) \ \cdot \ \frac{\partial ^2}{\partial t^2}{ \mathbcal{F}_m(x, t-c_0^{-1}\vert x-y\vert)} dy = \frac{\partial ^2}{\partial t^2}\mathbcal{F}_m^\textbf{in}(\mathrm{x},\mathrm{t}),\mbox{ for } \mathrm{x} \in \mathbb{R}^3, \mathrm{t} \in (0, \mathrm{T}),
\end{equation}
where $$\mathbb C_{mj}:=\begin{pmatrix}
                         0 & q_{z_{m_1},z_{j_2}} & \ldots & q_{z_{m_1},z_{j_{[K(z_{m_l})+1]}}} \\
                         q_{z_{m_2},z_{j_1}} & 0 & \ldots & q_{z_{m_2},z_{j_{[K(z_{m_l})+1]}}} \\
                         \vdots & \vdots & \ddots & \vdots \\
                         q_{z_{m_{[K(z_{m_l})+1]}},z_{j_1}} & q_{z_{m_{[K(z_{m_l})+1]}},z_{j_2}} & \ldots & 0
                  \end{pmatrix};\; 
                  \mathbb{A} = \big[\mathbcal a_{ij}\big]_{i,j=1}^{[K(z_{m_l})+1]}:=
     \begin{pmatrix}
        \hbar_{m_1} & 0 & \dots  & 0\\
        0 & \hbar_{m_1} & \dots  & 0\\
       \vdots & \vdots & \ddots & \vdots\\
       0 & 0 & \dots  & \hbar_{m_{[K(z_{m_l})+1]}} 
    \end{pmatrix}
$$
is describing the $[K(z_{m_l})+1]^2$-block interactions between the inclusions located in $\Omega_m$ and $\Omega_j$ for $j\ne m$ and the incident source $\mathbcal H^\textbf{in}_m := \Big(\frac{\partial^2}{\partial t^2}u^\textbf{in}(z_{m_1}), \frac{\partial^2}{\partial t^2}u^\textbf{in}(z_{m_2}), \ldots, \frac{\partial^2}{\partial t^2}u^\textbf{in}(z_{m_l})\Big)^t$. We define the operator $\mathbb{A}: (\mathrm{L}_r^2)^{[K(z_{m_l})+1]}\to (\mathrm{L}_r^2)^{[K(z_{m_l})+1]}$ with $\mathrm{L}_r^2:= \{ f \in \mathrm{L}^2(-r,\mathrm{T}): f=0\; \text{in}\ (-r,0)\}$ and $\bm{\mathbb{Y}}_m=\Big(\widetilde{Y}_{m_1},\widetilde{Y}_{m_2},\ldots,\widetilde{Y}_{m_{[K(z_{m_l})+1]}}\Big)^t.$

\noindent
Due to the assumption regarding the similar shape and material properties of the bubbles and in relation to the Lippmann-Schwinger equation, we also have:
\[
\overline{\mathbcal{C}}(x,y) = \begin{pmatrix}
    0 & \frac{b}{|x-y|} & \ldots & \frac{b}{|x-y|} \\
    \frac{b}{|x-y|} & 0 & \ldots & \frac{b}{|x-y|} \\
    \vdots & \vdots & \ddots & \vdots \\
    \frac{b}{|x-y|} & \frac{b}{|x-y|} & \ldots & 0
\end{pmatrix}, \quad x \ne y, \quad \text{and}\;
\overline{\mathbb{A}} = \big[\mathbcal{a}_{ij}\big]_{i,j=1}^{[K(z_{m_l})+1]} :=
\begin{pmatrix}
    \hbar & 0 & \ldots & 0 \\
    0 & \hbar & \ldots & 0 \\
    \vdots & \vdots & \ddots & \vdots \\
    0 & 0 & \ldots & \hbar
\end{pmatrix},
\]
where we define \(\mathbcal{F}_m^\textbf{in} := \big(u^\textbf{in}, u^\textbf{in}, \ldots, u^\textbf{in}\big)^t\). We will also include the zero initial conditions up to the first order for \(\mathbcal{F} := \big(\mathcal{F}_{m_1}, \mathcal{F}_{m_2}, \ldots, \mathcal{F}_{m_{[K(z_{m_l})+1]}}\big)^t\) in the equation \((\ref{effective-equation-np-3})\). Here, \(\mathbcal{C}\) represents the corresponding interaction matrix.

\vspace{0.2in}
\noindent
Similar to the previous section as derived in (\ref{effective-equation-np}), we obtain an equivalent representation of the Lippmann-Schwinger equation by considering the unknown as \(\sum\limits_{l=1}^{[K(z_{m_l})+1]}\widetilde{\mathcal{R}}_{m_l}(x,t) := \frac{1}{[K(z_{m_l})+1]}\sum\limits_{l=1}^{[K(z_{m_l})+1]}\mathcal{F}_{m_l}(x,t)\):
\begin{equation}\label{effective-equation-np-4}
\rchi_{\mathbf{\Omega}} \; \hbar \frac{\partial^2}{\partial t^2} \sum\limits_{l=1}^{[K(z_{m_l})+1]}\widetilde{\mathcal{R}}_{m_l}(x,t) + \sum\limits_{l=1}^{[K(z_{m_l})+1]}\widetilde{\mathcal{R}}_{m_l}(x,t) + \int_{\mathbf{\Omega}} [K(y)+1] \frac{b}{|x-y|} \frac{\partial^2}{\partial t^2} \sum\limits_{l=1}^{[K((z_{m_l})+1]}\widetilde{\mathcal{R}}_{m_l}(y,t-c_0^{-1}|x-y|) \, dy = \frac{\partial^2}{\partial t^2} u^{\text{in}}(x,t).
\end{equation}
The key difference in this section is that the dimensions of the matrix operators now depend on the space variable through \( [K(z_{m_l})+1] \). We also need similar regularity assumptions for \( \sum\limits_{l=1}^{[K(z_{m_l})+1]}\widetilde{\mathcal{R}}_{m_l}(x,t) \) as derived in Section \ref{regularity}. From the previous section, we observe that since \( \sum\limits_{l=1}^{[K(z_{m_l})+1]}\mathcal{F}_{m_l}(x,t) \in H^4_{0,\sigma}(0,\mathrm{T};\mathrm{L}^2(\mathbf{\Omega})) \) and \( [K(z_{m_l})+1] \in \mathrm{L}^\infty(\mathbf{\Omega}) \), it follows that \( \sum\limits_{l=1}^{[K(z_{m_l})+1]}\widetilde{\mathcal{R}}_{m_l}(x,t) \) belongs to \( H^4_{0,\sigma}(0,\mathrm{T};\mathrm{L}^2(\mathbf{\Omega})) \). Therefore, by applying similar arguments as in Section \ref{regularity}, we can derive the necessary regularity results for the solution to the equation (\ref{effective-equation-np-4}) or equivalently (\ref{effective-equation-np-3}) to perform the estimates.

\bigskip

\noindent
Similar to the expression for the scattered field as derived in (\ref{us}), using techniques similar to those employed in deriving estimates \(\eqref{est1}\), \(\eqref{est2}\), \(\eqref{res}\), and \(\eqref{e2}\), we can also obtain the following expression for the scattered field when \(K\) satisfies the properties mentioned at the beginning of this section:
\begin{align}
    u^\mathrm{s}(\mathrm{x},\mathrm{t}) 
    \nonumber &= \sum\limits_{\substack{m=1}}^{[\varepsilon^{-1}]}\frac{b}{4\pi|\mathrm{x}-\mathrm{z}_{m_1}|}\ \big[K(z_{m_l})+1\big]\ \varepsilon \sum\limits_{\substack{l=1}}^{\big[K(z_{m_l})+1\big]}\widetilde{\mathcal{R}}_{m_l}(z_{m_l},\mathrm{t}-\mathrm{c}_0^{-1}|x-\mathrm{z}_{m_l}|) + \mathcal{O}(\varepsilon^\frac{1}{3}).
\end{align}
Then, similar to the case for deriving the expression for \(\mathbcal{W}\) as in (\ref{integralW}), we state the following integral, which can also be seen from (\ref{effective-equation-np-4})
\begin{align}
     \mathbcal{W}(x,t) &= \int_{\mathbf{\Omega}} \big[K(y)+1\big]\frac{b}{4\pi\vert x-y\vert} \sum\limits_{l=1}^{\big[K(z_{m_l})+1\big]}\widetilde{\mathcal{R}}_{m_l}(y, t-c_0^{-1}\vert x-y\vert) \, dy.
\end{align}
Then, we have
\begin{align}
    \mathbcal W(x,t) -u^\mathrm{s}(\mathrm{x},\mathrm{t}) \nonumber&=  \int_{\mathbf{\Omega}}\big[K(y)+1\big]\frac{b}{4\pi\vert x-y\vert} {\sum\limits_{l=1}^{\big[K(z_{m_l})+1\big]}\widetilde{\mathcal R}_{m_l} (y, t-c_0^{-1}\vert x-y\vert)} dy 
    \\ \nonumber &- \sum\limits_{\substack{m=1 }}^{[\varepsilon^{-1}]}\frac{b}{4\pi|\mathrm{x}-\mathrm{z}_{m_1}|}[K(z_{m_l})+1]\ \varepsilon\sum\limits_{\substack{l=1}}^{\big[K(z_{m_l})+1\big]}\widetilde{\mathcal{R}}_{m_l}(z_{m_l},\mathrm{t}-\mathrm{c}_0^{-1}|x-\mathrm{z}_{m_l}|)
    + \mathcal{O}(\varepsilon^\frac{1}{3})
    \\ \nonumber& =  \sum\limits_{\substack{m=1 }}^{[\varepsilon^{-1}]}\int_{\Omega_m}\big[K(y)+1\big]\frac{b}{4\pi\vert x-y\vert} {\sum\limits_{l=1}^{\big[K(z_{m_l})+1\big]}\widetilde{\mathcal R}_{m_l} (y, t-c_0^{-1}\vert x-y\vert)} dy
    \\ \nonumber&+ \sum\limits_{\substack{m=1 }}^{[\varepsilon^{-1}]}\int_{\mathbf{\Omega}\setminus{\bigcup\limits_{m=1}^{{[\varepsilon^{-1}]}}\Omega_m}}\big[K(y)+1\big]\frac{b}{4\pi\vert x-y\vert} {\sum\limits_{l=1}^{\big[K(z_{m_l})+1\big]}\widetilde{\mathcal R}_{m_l} (y, t-c_0^{-1}\vert x-y\vert)} dy
    \\ \nonumber&- \sum\limits_{\substack{m=1 }}^{[\varepsilon^{-1}]}\frac{b}{4\pi|\mathrm{x}-\mathrm{z}_{m_1}|}\ [K(z_{m_l})+1]\ \varepsilon \sum\limits_{\substack{l=1}}^{\big[K(z_{m_l})+1\big]}\widetilde{\mathcal{R}}_{m_l}(z_{m_l},\mathrm{t}-\mathrm{c}_0^{-1}|x-\mathrm{z}_{m_l}|)
    + \mathcal{O}(\varepsilon^\frac{1}{3}).
\end{align}
Similar to the estimate for $E_{(1)}$ (\ref{esti1}) and using the fact that $\big[K(y)+1\big]\ \text{in}\ \mathrm{L}^\infty(\Omega)$, we deduce that
$$\sum\limits_{\substack{m=1 }}^{[\varepsilon^{-1}]}\int_{\mathbf{\Omega}\setminus{\bigcup\limits_{m=1}^{{[\varepsilon^{-1}]}}\Omega_m}}\big[K(y)+1\big]\frac{b}{4\pi\vert x-y\vert} {\sum\limits_{l=1}^{\big[K(z_{m_l})+1\big]}\widetilde{\mathcal R}_{m_l} (y, t-c_0^{-1}\vert x-y\vert)} dy = \mathcal{O}(\varepsilon^\frac{1}{3}).$$
Therefore, we have, 
\begin{align}\label{term}
  \mathbcal W(x,t) -u^\mathrm{s}(\mathrm{x},\mathrm{t}) \nonumber& =  
  \sum\limits_{\substack{m=1 }}^{[\varepsilon^{-1}]}\int_{\Omega_m}\big[K(y)+1\big]\frac{b}{4\pi\vert x-y\vert} {\sum\limits_{l=1}^{\big[K(z_{m_l})+1\big]}\widetilde{\mathcal R}_{m_l} (y, t-c_0^{-1}\vert x-y\vert)} dy
  \\ &- \sum\limits_{\substack{m=1 }}^{[\varepsilon^{-1}]}\frac{b}{4\pi|\mathrm{x}-\mathrm{z}_{m_1}|}\ [K(z_{m_l})+1]\ \varepsilon \sum\limits_{\substack{l=1}}^{\big[K(z_{m_l})+1\big]}\widetilde{\mathcal{R}}_{m_l}(z_{m_l},\mathrm{t}-\mathrm{c}_0^{-1}|x-\mathrm{z}_{m_l}|) + \mathcal{O}(\varepsilon^\frac{1}{3})
\end{align}
We now observe that the set of discontinuities \(\mathrm{S}\) of the function \(\big[\mathrm{K}(x)\big]\) is given by:
\[ \mathrm{S} := \big\{ x \in \mathbf{\Omega} \subseteq \mathbb{R}^3 \mid \mathrm{K}(x) \in \mathbb{Z} \big\}. \]
Then, as a corollary of the Implicit Function Theorem, we see that the level sets \(\big\{ x \in \mathbf{\Omega} \subseteq \mathbb{R}^3 \mid \mathrm{K}(x) = n \big\}\) (where \( n \) is an integer) are 2-dimensional surfaces in \(\mathbb{R}^3\) only when \(\mathrm{K}(x)\) has continuous first derivatives, i.e., \(\mathrm{K}(x) \in C^1\), with \(\nabla \mathrm{K}(x) \ne 0\) anywhere on these level sets (see \cite[Theorem 6.5]{calculus}). Therefore, \(\mathrm{S}\) will be a 2-dimensional surface, as it is a countable union of 2-dimensional surfaces.

\noindent
To estimate the terms involve in (\ref{term}), we consider the following. The point $z_{m_l}$ is located near one of the $\Omega_m$'s touching the boundary $\mathrm{S}$. In this case, we split the two integrals in (\ref{term}) into two regions. We denote by $\mathcal{N}_{(1)}$ the part that involves $\Omega_m$'s close to $z_{m_l}$ and the remaining part as $\mathcal{N}_{(2)}.$

\noindent
We proceed with similar technique as we estimated $\mathrm{E}_{(1)}$ (\ref{esti1}) to estimate the following term:
\begin{align}\label{es1}
    &\nonumber\Bigg|\sum\limits_{\substack{m=1 }}^{[\varepsilon^{-1}]}\int_{\mathcal{N}_{(1)}}\frac{b}{4\pi\vert x-y\vert}\big[K(y)+1\big] \sum\limits_{l=1}^{\big[K(z_{m_l})+1\big]}\mspace{-25mu}\widetilde{\mathcal R}_{m_l} (y, t-c_0^{-1}\vert x-y\vert) dy\Bigg|
    \\ \nonumber &\lesssim \Big\Vert[K(z_{m_l})+1]\Big\Vert_{\mathrm{L}^\infty(\Omega)} \sum\limits_{\substack{m=1}}^{[\varepsilon^{-\frac{1}{3}}]}\frac{1}{d_{ij}}\Big\Vert \sum\limits_{l=1}^{\big[K(z_{m_l})+1\big]}\mspace{-25mu}\widetilde{\mathcal R}_{m_l} (y, t-c_0^{-1}\vert x-y\vert)\Big\Vert_{\mathrm{C}^1\big(0,\mathrm{T};\mathrm{L}^\infty(\mathbf{\Omega})\big)}\text{vol}\big(\Omega_m\big)
    \\  &\lesssim \mathcal{O}\Big(\varepsilon \sum\limits_{\substack{j=1}}^{[\varepsilon^{-\frac{1}{3}}]}\frac{1}{d_{ij}} \Big)
    \lesssim \mathcal{O}\Big(\varepsilon\ \big[(2n+1)^2-(2n-1)^2\big]\frac{1}{n(\varepsilon^\frac{1}{3}-\frac{\varepsilon}{2})} \Big)
    \lesssim \mathcal{O}(\varepsilon^{\frac{1}{3}}).
\end{align}
In a similar way, as $\text{vol}(\Omega_m) = \varepsilon$, we have
\begin{align}\label{es2}
  \Bigg| \sum\limits_{\substack{m=1 }}^{[\varepsilon^{-1}]}\int_{\mathcal{N}_{(1)}}\frac{b}{4\pi|\mathrm{x}-\mathrm{z}_{m_1}|}\ [K(z_{m_l})+1] \sum\limits_{\substack{l=1}}^{\big[K(z_{m_l})+1\big]}\widetilde{\mathcal{R}}_{m_l}(z_{m_l},\mathrm{t}-\mathrm{c}_0^{-1}|x-\mathrm{z}_{m_l}|) \Bigg|  \lesssim \mathcal{O}(\varepsilon^{\frac{1}{3}}).
\end{align}
Therefore, based on this previous two estimates (\ref{es1}) and (\ref{es2}), we reduce the expression (\ref{term}) as follows:
\begin{align}
    \mathbcal W(x,t) -u^\mathrm{s}(\mathrm{x},\mathrm{t}) \nonumber& =  
  \frac{b}{4\pi}\sum\limits_{\substack{m=1 }}^{[\varepsilon^{-1}]}\int_{\mathcal{N}_{(2)}}\big[K(z_{m_l})+1\big]\Big(\frac{1}{\vert x-y\vert} -\frac{1}{|\mathrm{x}-\mathrm{z}_{m_1}|} \Big){\sum\limits_{l=1}^{\big[K(z_{m_l})+1\big]}\widetilde{\mathcal R}_{m_l} (y, t-c_0^{-1}\vert x-y\vert)} dy + \mathcal{O}(\varepsilon^\frac{1}{3}).
\end{align}
Thereafter, similar to the estimate for $E_{(3)}$ (\ref{esti3}) and using the fact that $\Big[\mathrm{K}(z_{m_l})+1\Big] \in \mathrm{L}^\infty(\mathbf{\Omega})$, we deduce that
\begin{align}\nonumber
    \Bigg|\frac{b}{4\pi}\sum\limits_{\substack{m=1 }}^{[\varepsilon^{-1}]}\int_{\mathcal{N}_{(2)}}\big[K(z_{m_l})+1\big]\Big(\frac{1}{\vert x-y\vert} -\frac{1}{|\mathrm{x}-\mathrm{z}_{m_1}|} \Big){\sum\limits_{l=1}^{\big[K(z_{m_l})+1\big]}\widetilde{\mathcal R}_{m_l} (y, t-c_0^{-1}\vert x-y\vert)} dy\Bigg| \lesssim \varepsilon^\frac{1}{3},
\end{align}
from which we conclude that
\begin{align}
    \nonumber\mathbcal W(x,t) -u^\mathrm{s}(\mathrm{x},\mathrm{t}) = \mathcal{O}(\varepsilon^\frac{1}{3}).
\end{align}

\end{document}